\documentclass [11pt,oneside,a4paper,mathscr]{amsart}
%\documentclass[11pt]{amsart}
%\documentclass{amsart}
%\usepackage{fullpage}
%\usepackage{geometry}
%\newgeometry{margin=1.5in}

\usepackage{amsfonts, amsmath, amssymb, amsthm,mathtools, stmaryrd}
\usepackage{verbatim}
\usepackage[T1]{fontenc}                % Bessere Schrift
\usepackage[utf8]{inputenc}             % utf8 als Eingabeformat
\usepackage[normalem]{ulem}
\usepackage{tikz-cd}
\usepackage{enumerate}
\usepackage{enumitem}
\usepackage{amsrefs}

\openup1.2\jot
\setlength{\topmargin}{0.1\topmargin}
\setlength{\oddsidemargin}{0.5\oddsidemargin}
\setlength{\evensidemargin}{0.5\oddsidemargin}
\setlength{\textheight}{1.02\textheight}
\setlength{\textwidth}{1.1\textwidth}

\usepackage{array}

%\parindent=0cm
%\usepackage{graphicx}
%\hoffset=-1.0cm \textheight=22cm \textwidth=15.5cm

%\makeatletter
%\def\author@andify{%
%  \nxandlist {\unskip ,\penalty-1 \space\ignorespaces}%
%    {\unskip {} \@@and~}%
%    {\unskip ,\penalty-2 \space }%
%}
%\makeatother

%\usepackage[margin=4.1cm]{geometry}

%\setlength{\textwidth}{5.9in}
%\setlength{\textwidth}{5.4in}
%\setlength{\textheight}{8.3in}
%\setlength{\oddsidemargin}{.35in}
%\setlength{\evensidemargin}{.35in}

\usepackage{hyperref}

\theoremstyle{plain}
\newtheorem{thm}{Theorem}[section]
\newtheorem{cor}[thm]{Corollary}

\newtheorem{lem}[thm]{Lemma}
\newtheorem{prop}[thm]{Proposition}

\numberwithin{equation}{section}

%\newtheorem{question}[thm]{Question}
  % Conjecture labeled by letters

\theoremstyle{definition}
\newtheorem{defn}[thm]{Definition}
\newtheorem{example}[thm]{Example}
\newtheorem{rmk}[thm]{Remark}

\theoremstyle{remark}

\newcommand{\BC}{{\mathbb{C}}}

\newcommand{\BP}{{\mathbb{P}}}
\newcommand{\BQ}{{\mathbb{Q}}}
\newcommand{\BR}{{\mathbb{R}}}

\newcommand{\BZ}{{\mathbb{Z}}}

\newcommand{\CC}{{\mathcal C}}

\newcommand{\CE}{{\mathcal E}}
\newcommand{\CF}{{\mathcal F}}
\newcommand{\CG}{{\mathcal G}}

\newcommand{\CI}{{\mathcal I}}

\newcommand{\CK}{{\mathcal K}}
\newcommand{\CL}{{\mathcal L}}
\newcommand{\CM}{{\mathcal M}}

\newcommand{\CO}{{\mathcal O}}
\newcommand{\CP}{{\mathcal P}}

\newcommand{\CS}{{\mathcal S}}

\newcommand{\CX}{{\mathcal X}}

\newcommand{\CZ}{{\mathcal Z}}
\newcommand{\suchthat}{\;\ifnum\currentgrouptype=16 \middle\fi|\;}

\newcommand{\Fg}{{\mathfrak{g}}}

\newcommand{\Fl}{{\mathfrak{l}}}

\newcommand{\Fs}{{\mathfrak{s}}}

\newcommand{\FS}{\mathfrak{S}}

\newcommand{\pt}{{\mathsf{p}}}
\newcommand{\ch}{{\mathrm{ch}}}
\newcommand{\td}{{\mathsf{td}}}

\newcommand{\h}{\mathrm{H}}
\newcommand{\tH}{\tilde{\h}}
\newcommand{\tdd}{\mathsf{td}^{1/2}}
\newcommand{\qi}{\mathsf{q}_{2i}}
\newcommand{\One}{\mathsf{1}}
\newcommand{\Sq}{\mathsf{q}}
\newcommand{\RO}{\mathrm{O}}

\newcommand{\A}{\mathsf{A}}
\newcommand{\Rc}{\mathrm{c}}
\newcommand{\SM}{\mathsf{M}}
\newcommand{\DM}{\mathrm{DMon}}

\DeclareFontFamily{OT1}{rsfs}{}
\DeclareFontShape{OT1}{rsfs}{n}{it}{<-> rsfs10}{}
\DeclareMathAlphabet{\curly}{OT1}{rsfs}{n}{it}

\newcommand\Ext{\operatorname{Ext}}
\newcommand\Hom{\operatorname{Hom}}
\newcommand\End{\operatorname{End}}
\newcommand{\Aut}{\operatorname{Aut}}

\newcommand{\Coh}{\mathrm{Coh}}

\newcommand{\Pic}{\mathop{\rm Pic}\nolimits}

\newcommand{\ST}{\mathsf{ST}}

\newcommand{\Sym}{\mathrm{Sym}}

\newcommand{\Stab}{\mathrm{Stab}}

\newcommand{\id}{\mathrm{id}}

\newcommand{\FM}{\mathsf{FM}}

\newcommand{\Q}{\mathbb{Q}}
\newcommand{\SH}{\mathrm{SH}}
\newcommand{\Kdrein}{\mathrm{K3}^{[n]}}
\newcommand{\tal}{\tilde{\alpha}}
\newcommand{\tde}{\tilde{\delta}}
\newcommand{\tbb}{\tilde{b}}
\newcommand{\sgn}{\mathrm{sgn}}

\newcommand{\Ker}{\mathrm{Ker}}

\newcommand{\Db}{\mathrm{D}^{\textup{b}}}
\newcommand{\Dbn}{\textrm{D}^{\textup{b}}_{\FS_n}}

\DeclareFontFamily{U}{mathc}{}
\DeclareFontShape{U}{mathc}{m}{it}%
{<->s*[1.03] mathc10}{}

\DeclareMathAlphabet{\mathscr}{U}{mathc}{m}{it}

\DeclareMathOperator{\sExtA}{\mathscr{Ext}}
\begin{document}
	\title[]{DERIVED CATEGORIES OF hyper-Kähler MANIFOLDS: Extended Mukai vector and integral structure}
\author[Thorsten Beckmann]{Thorsten Beckmann}
\newcommand{\Addresses}{{% additional braces for segregating \footnotesize
		\bigskip
		\footnotesize
		
		\textsc{Max--Planck--Institut für Mathematik,
			Vivatsgasse 7, 53111 Bonn, Germany.}\par\nopagebreak
		\textit{E-mail address}: \texttt{beckmann@math.uni-bonn.de}
}}
\maketitle
\begin{abstract}
	We introduce a linearised form of the square root of the Todd class inside the Verbitsky component of a hyper-Kähler manifold using the extended Mukai lattice. This enables us to define a Mukai vector for certain objects in the derived category taking values inside the extended Mukai lattice which is functorial for derived equivalences. As applications, we obtain a structure theorem for derived equivalences between hyper-Kähler manifolds as well as an integral lattice associated to the derived category of hyper-Kähler manifolds deformation equivalent to the Hilbert scheme of a K3 surface mimicking the surface case.
\end{abstract}

\section{Introduction}
\label{sec:in}
{\let\thefootnote\relax\footnotetext{The author is supported by the International Max--Planck Research School on Moduli Spaces of the Max--Planck Society.\\
2020 \textit{Mathematics Subject Classification.} 14J42, 18G80, 14J60, 14C17.\\
\textit{Key words and phrases.} Hyper-Kähler manifolds, derived categories, moduli spaces, Fourier--Mukai partners, Mukai vector.}}
\subsection{Background: Derived categories of K3 surfaces}
The study of derived categories of smooth projective varieties goes back to the works of Mukai \cite{MukaiModK3, MukaiPoincareFourier}. Over the years derived categories and equivalences between them have attracted great attention and culminated in many results, see for example \cite{BridgelandK3, BridgelandSt, BKR, OrlovEquiFM, BondalOrlovReconstruction, HLDKConj}.

Let $X$ and $Y$ be smooth projective varieties. We denote by $\Db(X) \coloneqq \Db(\Coh(X))$ the bounded derived category of coherent sheaves on $X$. Orlov \cite{OrlovEquiFM} showed that any derived equivalence $\Phi \colon \Db(X) \cong \Db(Y)$ is isomorphic to a Fourier--Mukai functor $\FM_\CE$ with Fourier--Mukai kernel $\CE \in \Db(X \times Y)$. In particular, using the Mukai vector
\[
v = \ch(\_) \tdd \colon \Db(X) \to \h^\ast(X,\BQ)
\]
with $\tdd$ the formal square root of the Todd class $\td \in \h^\ast(X,\BQ)$ 
the Fourier--Mukai functor $\FM_\CE$ induces an isomorphism 
\[
\Phi^{\h} = \FM_\CE^{\h} \colon \h^\ast(X,\BQ) \cong \h^\ast(Y,\BQ). 
\]

Let us specialize the above to the case of K3 surfaces $S$. Any smooth variety $Y$ which is derived equivalent to $S$ is again a K3 surface \cite{BridgelandMaciociaSurfaces}. The integral cohomology groups $\h^\ast(S,\BZ)$ are equipped with the \textit{Mukai pairing} $\tbb$ which is equal to the intersection pairing up to a sign $\tbb(\One,\pt) = -1$ for $\One \in \h^0(S,\BZ)$ the fundamental class and $\pt \in \h^4(S,\BZ)$ the point class. Moreover, the lattice $\h^\ast(S,\BZ)$ carries a weight-two Hodge structure inherited from $\h^\ast(S,\BZ)$. 
As alluded to above for a derived equivalence $\Phi \colon \Db(S) \cong \Db(S')$ between two K3 surfaces the following diagram commutes
\begin{equation}
\label{diag:intro_commut_K3}
\begin{tikzcd}
\Db(S)\ar{r}{\Phi}\ar{d}{v} & \Db(S')\ar{d}{v}\\
\h^{\ast}(S,\BZ) \ar{r}{\Phi^{\h}} & \h^{\ast}(S',\BZ).
\end{tikzcd}
\end{equation}
Mukai has shown that the morphism $\Phi^{\h}$ associated to $\Phi$ is a Hodge isometry \cite{MukaiModK3}. Furthermore, the lattice $\h^{\ast}(S,\BZ)$ together with its Hodge structure determines the derived category completely. That is, two 
K3 surfaces $S$ and $S'$ are derived equivalent if and only if $\h^{\ast}(S,\BZ)$ and $\h^{\ast}(S',\BZ)$ are Hodge isometric \cite{OrlovEquiFM}. 

In particular, many properties of the derived category of a K3 surface $S$ are encoded by the lattice $\h^\ast(S,\BZ)$ of rank $b_2(S)+2$ together with its Hodge structure. For example, this can be used to show that the number of Fourier--Mukai partners of $S$, that is the number of non-isomorphic K3 surfaces $S'$ which are derived equivalent to $S$, is finite. 

In addition, the group of auto-equivalences $\Aut(\Db(S))$ of K3 surfaces admits a representation
\[
\rho^{\h} \colon \Aut(\Db(S)) \to \h^\ast(S,\BZ).
\]
The group $\Aut(\Db(S))$ contains elements such as spherical twists $\ST_\CE$ along spherical objects $\CE \in \Db(S)$. These are symmetries which become only visible in the derived category. The image of $\rho^{\h}$ has been computed to be $\Aut^+(\h^\ast(X,\BZ))$, the group of Hodge isometries with real spinor norm one \cite{HLOYAutoK3, HMSK3Orientation, MukaiModK3, OrlovEquiFM}. A conjecture describing the kernel of $\rho^{\h}$ has been put forward by Bridgeland \cite{BridgelandK3} and has been proven for K3 surfaces with Picard rank one \cite[Thm.\ 1.3]{BayerBridgeland}.

The main goal of this paper is to find suitable analogues for the above results for the higher-dimensional analogues of K3 surfaces, that is hyper-Kähler manifolds $X$. For example, we want to study their derived categories and equivalences between them by means of an integral lattice of rank $b_2(X)+2$.
\subsection{Hyper-Kähler manifolds}
Let $X$ be a compact irreducible hyper-Kähler manifold of dimension $2n$, that is a compact simply connected Kähler manifold whose space of holomorphic two-forms is spanned by a non-degenerate symplectic form. We briefly recall properties of $X$ needed to state our results, see Section~\ref{sec:prelim} for a more thorough recollection. 

The second cohomology $\h^2(X,\BQ)$ of $X$ is endowed with a quadratic form called the Beauville--Bogomolov--Fujiki (BBF) form $b$. The Verbitsky component \[\SH(X,\BQ)\subset \h^{\ast}(X,\BQ)\] of $X$ is the 
subalgebra generated by all cohomology classes of degree two. It inherits a bilinear form
\[
b_{\textup{SH}}(\omega_1 \cdots \omega_m, \mu_1 \cdots \mu_{2n-m})=(-1)^m \int_X \omega_1 \cdots \omega_m \mu_1 \cdots \mu_{2n-m}
\]
called \textit{Mukai pairing} from the intersection pairing, where $\omega_i,\mu_j \in \h^2(X,\BQ)$. The extended rational Mukai lattice of $X$ is the graded vector space
\[
\tH(X,\BQ)\coloneqq \BQ \alpha \oplus \h^2(X,\BQ) \oplus  \BQ \beta.
\]
This space is endowed with a bilinear form $\tbb$ as well as a Hodge structure which both restrict to the BBF form and the Hodge structure on $\h^2(X,\BQ)$, respectively. The classes $\alpha$ and $\beta$ are of Hodge type, orthogonal to $\h^2(X,\BQ)$ and satisfy $\tbb(\alpha, \alpha) = \tbb(\beta,\beta)=0$ as well as $\tbb(\alpha,\beta) = -1$ such that $\tH(X,\BQ)$ resembles $\h(S,\BQ)$ for $S$ a K3 surface, see Section~\ref{subsec_verb_comp_prelim} for more details. There exists an embedding
\begin{equation*}
\begin{tikzcd}
\psi \colon \SH(X,\BQ) \arrow[r,hook]&  \Sym^n(\tH(X,\BQ)) \arrow[out=150,in=30]{l}[swap]{T}
\end{tikzcd}
\end{equation*}
of quadratic spaces and $T$ denotes the the orthogonal projection onto the subspace $\SH(X,\BQ)$. The morphism $\psi$ realizes the Verbitsky component as an irreducible representation of the Looijenga--Lunts--Verbitsky (LLV) algebra $\Fg(X)$, see \cite{TaelmanDerHKLL,LooijengaLunts,VerbitskyCohomologyHK} and Section~\ref{subsec_verb_comp_prelim}. 

Let us turn now to derived categories of hyper-Kähler manifolds $X$. Until recently not much has been known about $\Db(X)$.  Huybrechts--Nieper-Wißkirchen have shown that any Fourier--Mukai partner of $X$ is again a hyper-Kähler manifold \cite[Thm.\ 0.4]{HuybrechtsNieperWisskirchen}. 
Taelman in \cite{TaelmanDerHKLL} has refined the study of the derived category of $X$. He showed that a derived equivalence $\Phi\colon \Db(X)\cong \Db(Y)$ 
between hyper-Kähler manifolds restricts to a Hodge isometry 
\[
\Phi^{\SH}\colon \SH(X,\BQ)\cong \SH(Y,\BQ)
\]
which is functorially induced by a Hodge isometry
\begin{equation}
\label{eq:intro_tH_isometry}
\Phi^{\tH}\colon \tH(X,\BQ)\cong \tH(Y,\BQ),
\end{equation}
see \cite[Sec.\ 4]{TaelmanDerHKLL} or Section~\ref{subsec:prelim_der_equi_tael}. This can be used to show that for derived equivalent hyper-Kähler manifolds $X$ and $Y$ there is an isomorphism
\[
\h^i(X,\BQ) \cong \h^i(Y,\BQ)
\]
of $\BQ$-Hodge structures for all $i$ \cite[Thm.\ D]{TaelmanDerHKLL}. 
\subsection{Extended Mukai vector}
%We denote by $\tdd \in \h^\ast(X,\BQ)$ the formal square root of the todd class $\td \in \h^\ast(X,\BQ)$. 
The starting point of this paper is the following observation.
\theoremstyle{plain}
\newtheorem*{prop:toddinab}{Proposition~\ref{prop:todd_in_alphabeta}}
\begin{prop:toddinab}
	Let $X$ be a hyper-Kähler manifold of dimension $2n$. Then
	\[
	\overline{\tdd}=T\left(\frac{(\alpha + r_X\beta)^n}{n!}\right) \in \SH(X,\BQ).
	\]
\end{prop:toddinab}
Here, we decompose
\[
\h^{\ast}(X,\BQ) = \SH(X,\BQ) \oplus \SH(X,\BQ)^{\perp}
\]
orthogonally with respect to the intersection product on cohomology and $\overline{(\_)}$ denotes the projection onto the subspace $\SH(X,\BQ)$. The number $r_X\in \BQ$ is an explicit constant depending on $n$, the second Chern class 
$\Rc_2(X)$, and the Fujiki constant $c_X$ of $X$, see \eqref{eq:defn_r}. In particular, it only depends on the deformation type of $X$. For K3 surfaces the proposition reads \[
\tdd=T(\alpha + \beta)=\One + \pt \in \h^{\ast}(X,\BZ)
\]
and for $X$ of $\Kdrein$-type we have
\[
\overline{\tdd}=T\left( \frac{(\alpha + \frac{n+3}{4}\beta)^n}{n!} \right) \in \SH(X,\BQ).
\]

This enables us to define in Section~\ref{sec:vector_line_bundles} an \textit{extended Mukai vector}
\[
\tilde{v}(\CE)\in \tH(X,\BQ)
\]
for certain objects $\CE\in \Db(X)$. It relates to the classical Mukai vector $v(\CE)\in \h^{\ast}(X,\BQ)$ via
\[
T (\tilde{v}(\CE)^n) =c  \overline{v(\CE)}\in \SH(X,\BQ)
\]
for  $c\in \BQ$\footnote{For all known deformation types of hyper-Kähler manifolds we actually have $c\in \BZ$.} and this property characterizes the line spanned by $\tilde{v}(\CE)$ in $\tH(X,\BQ)$. For example, to 
a line bundle $\CL\in \Pic(X)$ with first Chern class $\Rc_1(\CL)=\lambda$ we assign
\[
\tilde{v}(\CL)=\alpha + \lambda + \left(r_X + \frac{b(\lambda,\lambda)}{2}\right) \beta \in \tH(X,\BQ).
\]
Moreover, the formation of the extended Mukai vector is functorial\footnote{The (at first sight surprising) possible extra sign comes from a sign convention in \cite[Thm.\ 4.9]{TaelmanDerHKLL} for certain hyper-Kähler manifolds. For 
all applications this issue can be ignored.} for derived equivalences, i.e.\  for a derived equivalence $\Phi$ the extended Mukai vector of $\Phi(\CE)$ equals $\pm\Phi^{\tH}(\tilde{v}(\CE))$. For the details and precise 
definitions we refer to Section~\ref{sec:vector_line_bundles}. 
\subsection{Derived equivalences of hyper-Kähler manifolds}
Consider a derived equivalence $\Phi \colon \Db(X)\cong \Db(Y)$ between projective hyper-Kähler manifolds $X$ and $Y$. 
Associated to it we have the induced Hodge isometry
\[
\Phi^{\tH}\colon \tH(X,\BQ)\cong \tH(Y,\BQ).
\]
It is a priori very hard to calculate this isometry for a given derived equivalence. However, the above defined extended Mukai vector allows us now to easily compute $\Phi^{\tH}$ for most known examples of derived equivalences between hyper-Kähler manifolds. 
We demonstrate this in Sections~\ref{sec:autoeq_hilb} and \ref{sec:Calculation_exampleequiv_on_extended}. 

Moreover, its properties lead to the following structural result for derived equivalences between hyper-Kähler manifolds. 
\theoremstyle{plain}
\newtheorem*{thm:GeneralStructure}{Theorem~\ref{thm:general_structure}}
\begin{thm:GeneralStructure}
	Let $X$ and $Y$ be deformation-equivalent projective hyper-Kähler manifolds and $\Phi \colon \Db(X) \cong \Db(Y)$ an equivalence with Fourier--Mukai kernel $\CE$. The rank $r$ of $\CE$ is of the form $\frac{a^nn!}{c_X}$ for $a\in \BQ$. If $r=0$, then $\CE$ induces coverings of $X$ and $Y$ with Lagrangian cycles, or there exists a Hodge isometry $\h^2(X,\BZ) \cong \h^2(Y,\BZ)$.
\end{thm:GeneralStructure}
If, for example, $\CE$ is an $X$-flat sheaf on $X \times Y$, then the second statement of the theorem means that the codimension $n$ component of $\textup{supp}(\CE)$ is a flat family of Lagrangian subvarieties of $Y$ which dominates $Y$. 

Note that the number in the above theorem
\[
\frac{a^nn!}{c_X}
\]
must in particular be an integer. For all known examples of hyper-Kähler manifolds $c_X\in \BZ$ and therefore we must already have $a\in \BZ$ 
using Legendre's or de Polignac's formula.

The theorem splits derived equivalences $\Phi = \FM_\CE \colon \Db(X) \cong \Db(Y)$ between hyper-Kähler manifolds into three cases. If the rank of the Fourier--Mukai kernel $\CE$ is non-zero, we are in the first case and the theorem asserts that the possible ranks of $\CE$ are severly restricted. In the second case, the geometries of $X$ and $Y$ are related by a correspondence in the $n$-th Chow group $\A^n(X \times Y)$ which induces coverings of both manifolds by Lagrangian cycles. In the last case the derived equivalence implies the existence of a Hodge isometry $\h^2(X,\BZ) \cong \h^2(Y,\BZ)$. To obtain a geometric interpretation of this conclusion, recall that up to finite index any Hodge isometry is induced from a parallel transport operator \cite[Lem.\ 6.23]{MarkmanSurvey}. The Global Torelli Theorem \cite{VerbitskyTorelli} states that the existence of a Hodge isometry $\h^2(X,\BZ) \cong \h^2(Y,\BZ)$ induced from a parallel transport operator is equivalent to $X$ and $Y$ being birational. 
\subsection{Integral structure}
We now specialize for the rest of the introduction to the case of $\Kdrein$-type hyper-Kähler manifolds $X$, that is hyper-Kähler manifolds which are deformation-equivalent to Hilbert scheme of length $n$ subschemes on a K3 surface. In this case we are able to obtain an integral lattice of rank $b_2(X)+2$ invariant under derived equivalences mimicking the situation for K3 surfaces. 

More explicitly, we define in Section~\ref{sec:lattices} an integral lattice
\[
\Lambda \subset \tH(X,\BQ)
\]
called \textit{$\Kdrein$ lattice} which inherits a Hodge structure $\Lambda_X$ from $X$ through the embedding. As an abstract lattice it is isometric to $\h^2(X,\BZ) \oplus U$ with 
$U$ the hyperbolic plane, but its weight-two Hodge structure differs from 
the one induced from $\h^2(X,\BZ)$ by a B-field twist, see Remark~\ref{rmk:comparison_twisted_HS}. The main difference in the higher-dimensional situation compared to the case of K3 surfaces is that $\h^2(X,\BZ)$ is not unimodular and the B-field twist compensates for the non-trivial discriminant. 

The $\Kdrein$ lattice is a sublattice 
$\Lambda\subset \Lambda_g$ of index two of the lattice $\Lambda_g$ 
generated by all extended Mukai vectors of objects in $\Db(X)$. We refer to Section~\ref{sec:lattices} for a discussion of all the lattices that appear and their relations. 

Our main result now is the following yielding a complete analogue of Mukai's results \cite{MukaiModK3} for derived equivalences of K3 surfaces. 
\theoremstyle{plain}
\newtheorem*{thm:K3nLatticeHIsom}{Theorem~\ref{prop:rel_version_hodge_isom_lattice}}
\begin{thm:K3nLatticeHIsom}
	Let $X$ and $Y$ be projective $\Kdrein$-type hyper-Kähler manifolds 
and $\Phi\colon \Db(X)\cong \Db(Y)$ a derived equivalence. Then $\Phi^{\tH}$ restricts to a Hodge isometry
	\[
	\Phi^{\tH} \colon \Lambda_X\cong \Lambda_Y.
	\]
\end{thm:K3nLatticeHIsom}
Even stronger, the $\Kdrein$ lattice is invariant by the action of all (compositions of) parallel transport operators and derived equivalences acting on the extended Mukai lattice $\tH(X,\BQ)$. The precise statement is Theorem~\ref{thm:derived_mon_grp_K3n}. 

As in the surface case, the existence of a lattice together with a Hodge structure governing properties of the derived category has strong implications. Here is one example. 
\theoremstyle{plain}
\newtheorem*{thm:K3nFiniteFMPartners}{Theorem~\ref{prop:FM_Partners_finite}}
\begin{thm:K3nFiniteFMPartners}
	For a fixed projective $\Kdrein$-type hyper-Kähler manifold $X$ the 
	number of projective $\Kdrein$-type manifolds $Y$ up to isomorphism with $\Db(X)\cong \Db(Y)$ is finite.
\end{thm:K3nFiniteFMPartners}
For all currently known deformation types of hyper-Kähler manifolds a derived equivalence $\Db(X)\cong \Db(Y)$ implies that $X$ and $Y$ must, in fact, be deformation-equivalent. In general, it is not known whether this conclusion remains true for arbitrary hyper-Kähler manifolds.
%For $n=2$ this can be strengthened. Namely, we know that any smooth projective variety $X$ which is derived equivalent to a $\mathrm{K3}^{[2]}$-type hyper-Kähler manifold is again a hyper-Kähler manifold with $b_2(X)=23$. Using the recent \cite[Cor.\ 1.6]{DebarreHuybrechtsMacriVoisin} we can deduce that $X$ is in fact deformation-equivalent to the Hilbert scheme of length two subschemes on a K3 surface. In particular, the number of Fourier--Mukai partners of a projective hyper-Kähler manifold of $\mathrm{K3}^{[2]}$-type is finite. For the other deformation types the question whether all Fourier--Mukai partners of a hyper-Kähler manifold $X$ are deformation-equivalent to $X$ is open. 

In \cite[Thm.\ 1.2]{BayMacMMP} it is shown that any hyper-Kähler manifold which is birational to a moduli space of stable objects on a K3 surface $S$ is itself a moduli space of stable objects on $S$. Using the $\Kdrein$ lattice we are able to upgrade this result to derived categories. 
\theoremstyle{plain}
\newtheorem*{cor:DerModuli}{Corollary~\ref{cor:der_equi_moduli_spaces}}
\begin{cor:DerModuli}
	Let $M^S_{\sigma}(v)$ be a smooth moduli space of stable objects on a projective K3 surface $S$ and $X$ a projective $\Kdrein$-type hyper-Kähler manifold such that $\Db(X)\cong \Db(M^S_{\sigma}(v))$. Then $X$ is itself a moduli space of stable objects on $S$.
\end{cor:DerModuli}
The corollary also yields the following.
\theoremstyle{plain}
\newtheorem*{cor:DerModHilb}{Corollary~\ref{cor:der_equi_K3_Hilbn}}
\begin{cor:DerModHilb}
	For two smooth moduli spaces $M^{S}_{\sigma}(v)$ and $M^{S'}_{\sigma'}(v')$ of stable objects on projective K3 surfaces $S$ and $S'$ with $\Db(M^S_{\sigma}(v))\cong \Db(M^{S'}_{\sigma'}(v'))$ we have $\Db(S)\cong \Db(S')$. Furthermore, $S$ and $S'$ are derived equivalent if and only if their Hilbert schemes $S^{[n]}$ and $S'^{[n]}$ are derived equivalent. 
\end{cor:DerModHilb}

Finally, considering a single hyper-Kähler manifold $X$ of $\Kdrein$-type  Theorem~\ref{prop:rel_version_hodge_isom_lattice} implies that the representation 
\[
\rho^{\tH} \colon \Aut(\Db(X)) \to \RO(\tH(X,\BQ))
\]
induced from \eqref{eq:intro_tH_isometry} factors via a representation
\[
\rho^{\tH} \colon \Aut(\Db(X)) \to \Aut(\Lambda_X).
\]
Here, $\Aut(\Lambda_X)$ denotes the group of all Hodge isometries of the $\Kdrein$ lattice $\Lambda_X$.  Specializing to Hilbert schemes $X = S^{[n]}$ of elliptic K3 surfaces $S$ with a section, we are able to give a lower bound on the image of $\rho^{\tH}$. 
\theoremstyle{plain}
\newtheorem*{thm:image_representation_Hilb_U}{Theorem~\ref{prop:image_representation_Hilb_U}}
\begin{thm:image_representation_Hilb_U}
	For the Hilbert scheme $S^{[n]}$ of a K3 surface with $U\subset \textup{NS}(S)$ the image $\textup{Im}(\rho^{\tH})$ of the representation $\rho^{\tH}$ satisfies
	\[
	\hat{\Aut}^+(\Lambda_{S^{[n]}})\subset \textup{Im}(\rho^{\tH}) \subset \Aut(\Lambda_{S^{[n]}}).
	\]
\end{thm:image_representation_Hilb_U}
The group $\hat{\Aut}^+(\Lambda_{S^{[n]}})$ is the group of all Hodge isometries with real spinor norm one which act via $\pm \id$ on the discriminant group. When $2n-2 = 4p^r$ or $2n-2 = 2 p^rq^{s}$ for odd prime numbers $p,q \in \BZ$ and natural numbers $r,s$ the inclusion
\begin{equation*}
	\hat{\Aut}^+(\Lambda_{S^{[n]}}) \subset \Aut^+(\Lambda_{S^{[n]}})
\end{equation*}
is an equality. In these cases Theorem~\ref{prop:image_representation_Hilb_U} determines $\mathrm{Im}(\rho^{\tH})$ up to index two. 

Theorem~\ref{prop:rel_version_hodge_isom_lattice} as well as the existence of the extended Mukai vectors yield several further strong consequences 
for the derived category and derived equivalences of hyper-Kähler manifolds. Instead of reciting all of them here, we invite the reader to directly go to Sections~\ref{sec:vector_line_bundles} and \ref{sec:conclusion_K3nHK_Db}.
\subsection{Related work}
While finishing writing this paper, Eyal Markman informed us that he has also constructed in \cite{MarkmanObs} a Mukai vector with image in the extended Mukai lattice for certain objects in the derived category. His approach uses Hochschild 
(co)homology and obstruction maps and is independent and different from ours. 

While working on this paper we also realised that a broader 
definition of the extended Mukai vector is possible. This has lead to the definition of atomic objects on hyper-Kähler manifolds studied in \cite{BeckmannAtomic}. 
\subsection{Structure of the paper}
In Section~\ref{sec:prelim} we recall results for hyper-Kähler manifolds and their derived categories.

In the first part of this paper we study arbitrary hyper-Kähler manifolds. In Section ~\ref{sec:sqrt_TODD} we prove Proposition~\ref{prop:todd_in_alphabeta} using results from Rozansky--Witten theory. In Section~\ref{sec:vector_line_bundles} we define the extended Mukai vector. There are two different classes of objects for which this can be done and we discuss their properties and give examples. 

In the second part we specialize to hyper-Kähler manifolds deformation-equivalent to the Hilbert scheme of a K3 surface. In Sections \ref{sec:lattices} and \ref{sec:Introcude_DMon} we introduce the lattices that will play a role as well as the derived monodromy group. In Section~\ref{sec:autoeq_hilb} we study derived equivalences of the Hilbert scheme on the 
extended Mukai lattice using the extended Mukai vector. With these preparations we prove in the subsequent section the invariance under derived equivalences of the lattice $\Lambda$. Consequences of the previous results 
will be drawn in Section~\ref{sec:conclusion_K3nHK_Db}. We conclude by demonstrating how known derived equivalences of hyper-Kähler manifolds 
fit into the new set-up.
\subsection{Acknowledgements}
I am indebted to my advisor Daniel Huybrechts for his constant support and encouragement in addition to very helpful comments on a first version of this text. Moreover, I wish to thank Lenny Taelman for many interesting 
discussions on \cite{TaelmanDerHKLL} as well as the content of the present paper 
during an invitation to the Korteweg--de Vries Institute for Mathematics whose hospitality is gratefully acknowledged. I have greatly benefited from discussions with Olivier Benoist, Andreas Mihatsch, Marc Nieper-Wißkirchen, and Georg Oberdieck. 
Furthermore, I want to thank the anonymous referees for helping me to improve the exposition and for spotting a mistake in an earlier version. 
\subsection{Notation}
We will always work over the complex numbers. The derived category $\Db(X)$ of a smooth projective variety $X$ is the bounded derived category of coherent sheaves on $X$. All functors will be implicitly derived. 

A lattice is a free $\BZ$-module of finite rank with an integral (mostly even) quadratic form. We use the notations from \cite[Sec.\ 14]{HuybrechtsK3} and \cite{GHS_Pi1}. We remark that in Section~\ref{sec:computation_upper_bound_DMON_K3n} we use the word lattice as well to denote a full rank discrete subset $W$ inside a finite dimensional rational vector space %This means that $W$ is a free $\BZ$-module of rank equal to the dimension of 
$V$ with a specified embedding $W\hookrightarrow V$. 
It will be clear from the context what is meant. 
\section{Recollections}
\label{sec:prelim}

We recollect facts and results and introduce the notation we will employ throughout the paper. 
\subsection{Hyper-Kähler manifolds and their cohomology}

Let $X$ be a hyper-Kähler manifold of complex dimension $2n$, i.e.\ a simply connected compact Kähler manifold such that $\h^0(X,\Omega_X^2)$ is generated by an everywhere non-degenerate holomorphic two-form. The second cohomology $\h^2(X,\BZ)$ possesses an integral primitive quadratic form $b$ called the \textit{Beauville--Bogomolov--Fujiki (BBF)} form. It is characterized up to sign by the property that there exists a constant $c_X$, called the \textit{Fujiki constant}, that only depends on the deformation type of $X$ such that
\[
\int_X \omega^{2n}=c_X \frac{(2n)!}{2^nn!}b(\omega, \omega)^n
\]
for all $\omega\in \h^2(X,\BZ)$. For the known examples of hyper-Kähler manifolds we have
\begin{equation*}
c_X=\begin{cases} 1&\Kdrein \textup{ or } \mathrm{OG}^{10}\textup{-type,}\\
n+1& \mathrm{Kum}^n \textup{ or } \mathrm{OG}^6\textup{-type.}\end{cases}
\end{equation*}
For the following, see \cite[Cor.\ 23.17]{GroHuyJoyCY}.

\begin{prop}
\label{prop:huybrechts_class_inv_small_defo}
	Let $X$ be a hyper-Kähler manifold of dimension $2n$ and consider a 
class $\mu \in \h^{4p}(X,\BR)$ which is of type $(2p,2p)$ on all small deformations of $X$. Then there exists a constant $C(\mu)\in \BR$ such that
	\[
	\int_X \mu \omega^{2n-2p}=C(\mu)b(\omega,\omega)^{n-p}
	\]
	for all $\omega \in \h^2(X,\BR)$.
\end{prop}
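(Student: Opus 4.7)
Both sides of the asserted identity are homogeneous polynomial functions of $\omega \in \h^2(X,\BR)$ of degree $2n-2p$. By the first fundamental theorem of classical invariant theory, the ring of $\RO(\h^2(X,\BR),b)$-invariant polynomial functions on $\h^2(X,\BR)$ is freely generated by the quadratic form $b(\omega,\omega)$. Consequently every $\RO(b)$-invariant homogeneous polynomial of degree $2n-2p$ is a scalar multiple of $b(\omega,\omega)^{n-p}$, so the proposition reduces to showing that the polynomial
\[
P_\mu(\omega) \coloneqq \int_X \mu \cdot \omega^{2n-2p}
\]
is $\RO(b)$-invariant; the scalar $C(\mu)$ is then read off from any single non-degenerate value.

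To produce the invariance, I would spread $(X,\mu)$ out over the connected component of the moduli space $\CM$ of marked hyper-Kähler manifolds deformation equivalent to $X$. The Gauss--Manin local system extends $\mu$ and any $\omega \in \h^2(X,\BR)$ to flat real sections $\mu_t$ and $\omega_t$, and the integral $\int_{X_t} \mu_t \cdot \omega_t^{2n-2p}$ is locally constant on $\CM$. The hypothesis that $\mu$ is of Hodge type $(2p,2p)$ on all small deformations of $X$ says that $\mu_t$ remains in $\h^{2p,2p}(X_t)$ on an analytic open neighborhood of $[X]$ in $\CM$; since the condition $\mu_t \in F^{2p}\h^{4p}(X_t)$ cuts out a closed analytic subvariety of $\CM$ (via the period map) and $\CM$ is irreducible, $\mu_t$ must remain of type $(2p,2p)$ on every fiber. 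In particular, $\mu$ is invariant under the full parallel-transport action along loops in $\CM$. By Verbitsky's and Markman's description of the monodromy group \cite{MarkmanSurvey}, the image of this monodromy representation in $\RO(\h^2(X,\BZ),b)$ has finite index and is therefore Zariski-dense in $\RO(\h^2(X,\BR),b)$. Since $P_\mu$ is polynomial and invariant under a Zariski-dense subgroup, it is invariant under all of $\RO(\h^2(X,\BR),b)$, as required.

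The main obstacle I anticipate is the global-to-local transition: the hypothesis only concerns a neighborhood of $X$ in the Kuranishi space, whereas the invariance of $P_\mu$ is forced by the action of the monodromy group, which is global. The argument above closes this gap through the rigidity of Hodge loci in the period domain, together with the density statement for the monodromy image inside $\RO(b)$. Once these inputs are secured, the classical invariant theory of the quadratic form $b$ yields the existence of $C(\mu)$ and finishes the proof.
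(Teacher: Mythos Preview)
First, note that the paper itself does not prove this statement: it simply cites \cite[Cor.\ 23.17]{GroHuyJoyCY}. So your proposal is being compared against the classical argument of Fujiki (as presented in that reference), not against anything in the present paper.

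Your reduction to $\RO(b)$-invariance of $P_\mu$ via the invariant theory of quadratic forms is correct and clean. The problem lies in how you produce that invariance. From ``$\mu_t$ is of type $(2p,2p)$ at every point of the moduli space'' you infer ``$\mu$ is invariant under parallel transport along loops''. This implication is not valid. Parallel transport along a loop $\gamma$ returns a class $\gamma_\ast\mu \in \h^{4p}(X,\BR)$; your hypothesis only guarantees that $\gamma_\ast\mu$ is again of type $(2p,2p)$ (indeed, of type $(2p,2p)$ on all deformations), not that $\gamma_\ast\mu=\mu$. What you have shown is that the subspace of ``always Hodge'' classes is monodromy-stable, not that monodromy acts trivially on it. Since $P_\mu$ only sees the projection $\overline{\mu}\in\SH^{4p}(X,\BR)$, what you would actually need is that this projection is monodromy-invariant --- and knowing that the ``always Hodge'' part of $\SH^{4p}$ is one-dimensional is essentially the statement you are trying to prove (the paper's Lemma~\ref{lem:monodromy_inv_sh} is deduced \emph{from} this proposition, so you cannot invoke it here). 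There is also a secondary issue: the finite-index result for $\mathrm{Mon}^2$ you cite is established for the known deformation types, whereas the proposition is stated for arbitrary hyper-K\"ahler manifolds.

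The classical argument avoids monodromy entirely and works directly from local Torelli. One fixes a nearby complex structure with holomorphic $2$-form $\sigma$, decomposes $\omega = x\sigma + y\bar\sigma + \eta$ with $\eta\in\h^{1,1}$, and observes by a Hodge-type count (using only $\mu\in\h^{2p,2p}$) that in $\int_X\mu\,\sigma^a\bar\sigma^b\eta^c$ only terms with $a=b$ survive. Thus $P_\mu(\omega)$ and $b(\omega,\omega)^{n-p}$ are both polynomials in $xy$ and $\eta$; varying the complex structure over the period domain and polarizing forces $P_\mu$ to be a scalar multiple of $b^{n-p}$. If you want to repair your approach, the missing ingredient is a direct proof (independent of the proposition) that the ``always $(2p,2p)$'' subspace of $\SH^{4p}(X,\BR)$ is one-dimensional --- for instance via the action of the semisimple part of the LLV algebra on the irreducible module $\SH$.
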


Using Rozansky--Witten theory Nieper-Wißkirchen \cite{NieperWissHRRonIHS}  established results on characteristic classes and Riemann--Roch formulae for hyper-Kähler manifolds. 

\begin{defn}
	For $\omega \in \h^2(X,\BR)$ define its \textit{characteristic value} as
	\[
	\lambda(\omega)\coloneqq \frac{(2n)!12c_X}{2^nn!(2n-1)C(\Rc_2(X))}b(\omega,\omega).
	\]
\end{defn}
Using the Fujiki relations, one can check that the above definition agrees with \cite[Def.\ 17]{NieperWissHRRonIHS}. For the formula of the square root of the Todd class we will need the following result, cf.\ \cite[p.\ 738]{NieperWissHRRonIHS}.
\begin{prop}
	\label{prop:NP_tdd}
	For $X$ a hyper-Kähler manifold of dimension $2n$ and arbitrary $\omega \in \h^2(X,\BR)$ it holds
	\[
	\int_X \tdd \exp(\omega)=(1+\lambda(\omega))^n\int_X\tdd.
	\]
\end{prop}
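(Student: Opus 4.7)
The strategy is to express both sides as polynomials in $b(\omega,\omega)$ of degree at most $n$, and then match coefficients.

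First, I would establish that $\tdd$ has nonzero components only in degrees $\h^{4p}(X,\BR)$. Because the holomorphic symplectic form provides an isomorphism $TX\cong\Omega^1_X$, and since $\Rc_i(E^\ast)=(-1)^i\Rc_i(E)$, the odd Chern classes $\Rc_{2k+1}(X)$ vanish. As $\tdd$ is a polynomial in these Chern classes, its component $\tdd^{(p)}\in\h^{4p}(X,\BR)$ is a polynomial in the even Chern classes and hence of Hodge type $(2p,2p)$ on every small deformation of $X$. Proposition~\ref{prop:huybrechts_class_inv_small_defo} then yields constants $C(\tdd^{(p)})\in\BR$ with
\[
\int_X \tdd^{(p)}\,\omega^{2n-2p}=C(\tdd^{(p)})\,b(\omega,\omega)^{n-p},
\]
so that
\[
\int_X \tdd\exp(\omega)=\sum_{p=0}^n \frac{C(\tdd^{(p)})}{(2n-2p)!}\,b(\omega,\omega)^{n-p}
\]
is a polynomial in $b(\omega,\omega)$ of degree at most $n$. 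The right-hand side $(1+\lambda(\omega))^n\int_X\tdd$ is likewise such a polynomial since $\lambda(\omega)$ is linear in $b(\omega,\omega)$.

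Next, I would match coefficients. Setting $\omega=0$ recovers $\int_X\tdd$ on both sides. The coefficient of $b(\omega,\omega)^n$ on the left is $C(1)/(2n)!=c_X/(2^n n!)$ by the Fujiki relation, which must match $K^n\int_X\tdd$ on the right (writing $\lambda(\omega)=K\,b(\omega,\omega)$). The coefficient of $b(\omega,\omega)$ involves $\tdd^{(1)}=\Rc_2(X)/24$ (using $\Rc_1(X)=0$, so that $\td_2=\Rc_2(X)/12$), producing a relation that pins down the precise normalisation $K=\frac{(2n)!\,12c_X}{2^nn!(2n-1)C(\Rc_2(X))}$ chosen in the definition of $\lambda$.

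The main obstacle is matching the \emph{intermediate} coefficients: showing that $\sum_p\frac{C(\tdd^{(p)})}{(2n-2p)!}X^{n-p}=\bigl(\int_X\tdd\bigr)(1+KX)^n$ as polynomials in a formal variable $X$ requires input beyond Proposition~\ref{prop:huybrechts_class_inv_small_defo}, which on its own does not relate the constants $C(\tdd^{(p)})$ for different values of $p$. The cleanest input is Rozansky--Witten theory as developed in \cite{NieperWissHRRonIHS}: the integrals $\int_X\tdd^{(p)}\omega^{2n-2p}$ are expressed as weighted sums over trivalent graph contributions, and the binomial structure $(1+\lambda(\omega))^n$ arises by exponentiation of a single distinguished loop diagram. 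A more elementary alternative is to verify the identity on one representative per deformation type of hyper-Kähler manifold (for instance on $S^{[n]}$ with $S$ a K3 surface, where all characteristic classes and $\int_X\tdd$ are known explicitly), and then transfer it everywhere by noting that both sides are functions of $b(\omega,\omega)$ alone with coefficients that are deformation-invariant topological data of $X$.
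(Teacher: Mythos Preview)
The paper does not actually prove this proposition: it is quoted from Nieper--Wi\ss kirchen's work on Rozansky--Witten invariants, with the reference ``cf.\ \cite[p.\ 738]{NieperWissHRRonIHS}'' standing in for a proof. Your setup (reducing to a polynomial identity in $b(\omega,\omega)$ via Proposition~\ref{prop:huybrechts_class_inv_small_defo}) is correct and useful, and you are right that the substantive content lies in matching the intermediate coefficients. Your first proposed resolution---appealing to the Rozansky--Witten graph calculus of \cite{NieperWissHRRonIHS}---is precisely what the paper invokes, so at that level your approach and the paper's coincide.

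Your second alternative, however, is not a proof of the general statement. Verifying the identity on $S^{[n]}$ (or on one representative per \emph{known} deformation type) and then transporting it by deformation invariance only establishes the result for hyper-K\"ahler manifolds in those deformation classes. The proposition is asserted for an arbitrary hyper-K\"ahler manifold $X$, and there is no classification of deformation types, so this route leaves a genuine gap. If you want a self-contained argument you must go through the Rozansky--Witten computation; the deformation-type-by-deformation-type check is at best a sanity check, not a proof.
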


\subsection{Verbitsky component}
\label{subsec_verb_comp_prelim}
Denote by $(\tilde{\textup{H}}(X,\Q),\tbb)$ the rational quadratic vector 
space defined by \[
\BQ \alpha \oplus \h^2(X,\Q) \oplus \BQ \beta.
\]
The quadratic form $\tilde{b}$ on $\tH(X,\BQ)$ restricts to the BBF form $b$ on $\h^2(X,\BQ)$ and the two classes $\alpha$ and $\beta$ are orthogonal to $\h^2(X,\BQ)$ and satisfy $\tbb(\alpha,\beta)=-1$ as well as $\tbb(\alpha, \alpha)= \tbb(\beta,\beta)=0$. Although it is not integral, we call $\tH(X,\BQ)$ the \textit{extended Mukai lattice} of $X$.

Furthermore, we define on $\tH(X,\BQ)$ a grading by declaring $\alpha$ to 
be of degree zero, $\h^2(X,\BQ)$ remains in degree two and $\beta$ is of degree four. Finally, the extended Mukai lattice is equipped with a weight-two Hodge structure
\begin{align*}
	(\tH(X,\BQ) \otimes_\BQ \BC)^{2,0} &\coloneqq \h^{2,0}(X)\\	
	(\tH(X,\BQ) \otimes_\BQ \BC)^{0,2} &\coloneqq \h^{0,2}(X)\\
	(\tH(X,\BQ) \otimes_\BQ \BC)^{1,1} &\coloneqq \h^{1,1}(X) \oplus \BC \alpha \oplus \BC \beta.
\end{align*}

Let $\textup{SH}(X,\Q)$ be the \textit{Verbitsky component}, i.e.\ the graded subalgebra of $\textup{H}^*(X,\Q)$ generated by $\textup{H}^2(X,\Q)$. Verbitsky \cite{BogomolovVerbitskysResults,VerbitskyCohomologyHK} proved the existence of a graded morphism $\psi\colon \SH(X,\BQ) \to \Sym^n(\tilde{\h}(X,\BQ))$ sitting in a short exact sequence
\[
0 \rightarrow \textup{SH}(X,\Q) \xrightarrow{\psi} \textup{Sym}^n(\tilde{\textup{H}}(X,\Q)) \xrightarrow{\Delta} \textup{Sym}^{n-2}(\tH(X,\Q))\rightarrow 0.
\]
Here, the map $\Delta$ is the Laplacian operator defined on pure tensors via
\[
v_1 \cdots v_n \mapsto \sum_{i<j} \tilde{b}(v_i,v_j) v_1 \cdots  \hat{v_i} \cdots \hat{v_j} \cdots v_n.
\]
The map $\psi$ is uniquely determined (up to scaling) by the condition that it is a morphism of $\mathfrak{g}(X)$-modules, where $\mathfrak{g}(X)$ 
denotes the \textit{Looijenga--Lunts--Verbitsky (LLV)} algebra, see \cite{LooijengaLunts} or \cite{GKLRLLV}. Recall that $\Fg(X)$ is the Lie algebra generated by all $\Fs\Fl_2$-triples $(e_{\omega},h,f_\omega)$ with $e_\omega$ the Lefschetz operator for $\omega \in \h^2(X,\BQ)$ satisfying the Hard Lefschetz property, $h$ the grading operator and $f_\omega$ the dual Lefschetz operator.

The $\Fg(X)$-structure of $\tH(X,\BQ)$ is defined by the conditions $e_\omega(\alpha)=\omega$, $e_\omega(\mu)=b(\omega,\mu)\beta$ and $e_\omega(\beta)=0$ for all classes $\omega, \mu \in \h^2(X,\BQ)$. The $n$-th symmetric power $\Sym^n(\tH(X,\BQ))$ then inherits the structure of a $\Fg(X)$-module by letting $\Fg(X)$ act by derivations. The inclusion realizes $\textup{SH}(X,\Q)$ as an irreducible Lefschetz module \cite{VerbitskyCohomologyHK}. We fix once and for all a choice of $\psi$ by setting $\psi(1)=\alpha^n/n!$. 

Taelman \cite[Sec.\ 3]{TaelmanDerHKLL} showed that the map $\psi$ is an isometry with respect to the \textit{Mukai pairing}
\[
b_{\textup{SH}}(\omega_1 \cdots \omega_m, \mu_1 \cdots \mu_{2n-m})=(-1)^m \int_X \omega_1 \cdots \omega_m \mu_1 \cdots \mu_{2n-m}
\]
on $\textup{SH}(X,\Q)$ and the pairing
\[
b_{[n]}(x_1 \cdots x_n, y_1 \cdots y_n)=(-1)^n c_X \sum_{\sigma \in \mathfrak{S}_n}\prod_{i=1}^n \tbb(x_i,y_{\sigma(i)})
\]
on $\textup{Sym}^n(\tH(X,\BQ))$. Note that our definition of $b_{[n]}$ differs from Taelman's definition by the Fujiki constant. Ours has the advantage that $\psi$ is always an isometry. The orthogonal projection onto the subspace $\SH(X,\BQ)$ will be denoted by 
\[
T \colon \Sym^n(\tH(X,\BQ))\to \SH(X,\BQ).
\]
\begin{rmk}
	\label{rmk:Injectivity_of_T}
	Observe that $\psi$ is surjective in cohomological degrees $0,2,4n-2$ and $4n$. Equivalently, the projection $T$ is injective restricted to these 
degrees.
\end{rmk}
Bogomolov and Verbitsky \cite{BogomolovVerbitskysResults,VerbitskyCohomologyHK} showed that the Verbitsky component can also be described via
\begin{equation}
\label{eq:Bogomolov_Verbitsky_description_SH}
	\SH(X,\BC)\cong\Sym^{\bullet}\h^2(X,	\BC)/\langle \mu^{n+1}\mid b(\mu,\mu)=0 \rangle.
\end{equation}
\subsection{Derived equivalences}
\label{subsec:prelim_der_equi_tael}
The following is \cite[Thm.\ A]{TaelmanDerHKLL}.
\begin{thm}[Taelman]
\label{thm:TaelmanLie_coh_compatible}
	Let $X$ and $Y$ be projective hyper-Kähler manifolds together with an equivalence $\Phi \colon \Db(X)\cong \Db(Y)$. Then $\Phi$ induces a canonical Lie algebra isomorphism
	\[
	\Phi^{\Fg}\colon \Fg(X) \cong \Fg(Y)
	\]
	which is equivariant for the induced isometry $\Phi^{\h}\colon \h^\ast(X,\BQ) \cong \h^\ast(Y,\BQ)$.
\end{thm}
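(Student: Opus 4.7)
The goal is to produce, for a given derived equivalence $\Phi\colon \Db(X)\cong\Db(Y)$, a Lie algebra map $\Phi^{\Fg}$ sending the Lefschetz $\Fs\Fl_2$-triples generating $\Fg(X)$ to analogous triples in $\Fg(Y)$, compatibly with $\Phi^{\h}$. The natural candidate is conjugation: for $\xi\in\Fg(X)\subset\End(\h^\ast(X,\BQ))$, set
\[
\Phi^{\Fg}(\xi)\ \coloneqq\ \Phi^{\h}\circ\xi\circ(\Phi^{\h})^{-1}\in\End(\h^\ast(Y,\BQ)).
\]
Equivariance for $\Phi^{\h}$ is then automatic, and the content of the theorem is that this conjugation lands inside $\Fg(Y)\subset\End(\h^\ast(Y,\BQ))$. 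Because $\Fg(X)$ is generated by the Lefschetz triples $(e_\omega,h,f_\omega)$ for $\omega\in\h^2(X,\BQ)$ of Hard Lefschetz type, it is enough to show that each such triple is sent to a Lefschetz triple on $Y$.

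The main tool is the Hochschild structure and its invariance under derived equivalences. For $\omega\in\h^{1,1}(X,\BQ)$ the cup product $e_\omega=\smile\omega$ corresponds, via the HKR-isomorphism corrected by $\tdd$ (the Căldăraru--Mukai picture), to the action on Hochschild homology of the image of $\omega$ in $\HH^2(X)$. Since $\Phi$ induces an isomorphism of the pair $(\HH^\ast,\HH_\ast)$ compatible with module structures, conjugation by $\Phi^{\h}$ carries $e_\omega$ to cup product with some class $\omega'\in\h^{1,1}(Y,\BQ)$ (up to the HKR/Mukai corrections, which are built into $\Phi^{\h}$ itself). One must argue analogously for $\omega$ of type $(2,0)$ or $(0,2)$, using that $\Phi$ respects the decomposition of Hochschild cohomology coming from polyvector fields versus forms; here the hyper-Kähler hypothesis is crucial, since the symplectic form identifies $\h^{2,0}$ with a summand of $\HH^0$ (polyvector fields) via contraction. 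Thus the $e_\omega$ generators transfer to operators of the form $e_{\omega'}$ with $\omega'\in\h^2(Y,\BQ)$. Hard Lefschetz is preserved under conjugation, so these triples live in $\Fg(Y)$.

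To finish, I would check that the grading operator $h$ and the dual Lefschetz operators $f_\omega$ also transfer. The grading operator is determined, on each simple $\Fg(X)$-submodule of $\h^\ast(X,\BQ)$, by the eigenvalue decomposition under any sufficiently generic $e_\omega$ together with a normalization fixed by the $\Fs\Fl_2$-relations; alternatively, one can use the Mukai-vector-type recipe to reconstruct $h$ from the cohomological action together with a choice of Serre functor, which is a derived invariant. Once $e_\omega$ and $h$ are transferred, $f_\omega$ is uniquely determined by the $\Fs\Fl_2$-relations $[e_\omega,f_\omega]=h$, $[h,e_\omega]=2e_\omega$, $[h,f_\omega]=-2f_\omega$, so its image is forced. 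Finally, since the transferred operators generate a Lie subalgebra of $\End(\h^\ast(Y,\BQ))$ which agrees with $\Fg(Y)$ on generators, $\Phi^{\Fg}$ is a Lie algebra isomorphism.

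The hard part will be the matching of cup product with the Hochschild cohomology action with the correct normalization: the discrepancy between $\ch$ and $\ch\cdot\tdd$ (the Mukai vector) forces correction terms, and one must check that $\Phi^{\h}$, as defined using the Mukai vector, is precisely the map that intertwines the two pictures. The other subtle point is the transfer of $f_\omega$ and $h$, which are not manifestly cup-product operators; handling them requires either reducing to the $e_\omega$'s via the $\Fs\Fl_2$-relations (once enough $e$'s have been transferred) or invoking the structure of $\h^\ast(X,\BQ)$ as an $\Fg(X)$-module and the fact that this module structure is determined by the ring structure up to the automorphisms detected by $\Phi^{\h}$.
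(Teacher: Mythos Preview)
The paper does not give a proof of this statement: it is attributed to Taelman and cited as \cite[Thm.~A]{TaelmanDerHKLL}, with no argument supplied. There is therefore no in-paper proof to compare your proposal against; what follows compares your sketch with Taelman's actual argument.

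Your overall outline---define $\Phi^{\Fg}$ by conjugation and use the derived invariance of the $\HH^\ast$-module structure on $\HH_\ast$ together with the (Todd-twisted) HKR isomorphism to transport the $e_\omega$'s---is indeed Taelman's strategy. The genuine gap in your sketch is the handling of classes $\omega\notin\h^{1,1}$. You assert that $\Phi$ ``respects the decomposition of Hochschild cohomology coming from polyvector fields versus forms,'' but this is precisely what is \emph{not} automatic: a derived equivalence respects the Hochschild grading, not the HKR bigrading $\HH^k\cong\bigoplus_{p+q=k}\h^p(X,\wedge^q\CT_X)$. What makes the hyper-K\"ahler case work is that the symplectic form gives $\CT_X\cong\Omega_X$, hence an identification $\HH^2(X)\cong\h^2(X,\BC)$, and Taelman then verifies by an explicit computation that under this identification the module action of $\HH^2$ on $\HH_\ast\cong\h^\ast(X,\BC)$ is exactly $e_\omega$ for \emph{every} $\omega\in\h^2(X,\BC)$, in particular for $(2,0)$-classes acting through contraction with the associated bivector. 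Your sentence gestures at this but does not carry it out, and it is the technical heart of the proof. Likewise, Taelman recovers the grading operator $h$ not by bootstrapping from the $\Fs\Fl_2$-relations as you suggest, but directly as a derived invariant: the Hochschild grading on $\HH_\ast$ transports to (a shift of) $h$ under twisted HKR, independently of any choice of $e_\omega$. Once $h$ and all $e_\omega$ are known to lie in $\Fg(Y)$, the $f_\omega$'s follow as you say.
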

We reproduce some consequences of this result from \cite[Sec.\ 4]{TaelmanDerHKLL} needed below. 
The above theorem implies that given an auto-equivalence $\Phi \in \Aut(\Db(X))$, the induced action on cohomology
\[
\Phi^{\h} \colon \h^\ast(X,\BQ) \cong \h^{\ast}(X,\BQ)
\]
restricts to a Hodge isometry
\[
\Phi^{\SH} \colon \SH(X,\BQ) \cong \SH(X,\BQ)
\]
which is equivariant with respect to $\Phi^{\Fg}$. This yields a representation
\[
\rho^{\SH} \colon \Aut(\Db(X))\to \mathrm{O}(\SH(X,\BQ)).
\] 
Moreover, the above represention $\rho^{\SH}$ factors over a representation
\[
\rho^{\tH} \colon \Aut(\Db(X))\to \mathrm{O}(\tH(X,\BQ))
\]
under the assumption that $n$ is odd, or having $n$ even and $b_2(X)$ odd. Note that all known examples of hyper-Kähler manifolds satisfy one of the two conditions. 

More precisely, for odd $n$ every Hodge isometry $\Phi^{\SH}$ of $\textup{SH}(X,\Q)$ is induced by an isometry of $\textup{Sym}^n(\tilde{\textup{H}}(X,\Q))$ which comes from a unique Hodge isometry $\Phi^{\tH}$ of $\tilde{\textup{H}}(X,\Q)$ \cite[Prop. 4.1]{TaelmanDerHKLL}, i.e.\ the following diagram
\[
\begin{tikzcd}
\SH(X,\BQ)\ar{r}{\Phi^{\SH}}\ar{d}{\psi} &\SH(X,\BQ) \ar{d}{\psi} \\
\Sym^n(\tH(X,\BQ)) \ar{r}{\Sym^n\Phi^{\tH}} & \Sym^n(\tH(X,\BQ))
\end{tikzcd}
\]
commutes. For even $n$ and odd $b_2(X)$, there is an extra sign $\det(\Phi^{\tH})=\epsilon(\Phi^{\tH})\in \{\pm 1\}$ such that
\begin{equation}
\label{diag:main_diag_sec2}
\begin{tikzcd}
\SH(X,\BQ)\ar{r}{\epsilon(\Phi^{\tH})\Phi^{\SH}}\ar{d}{\psi} &\SH(X,\BQ) \ar{d}{\psi} \\
\Sym^n(\tH(X,\BQ)) \ar{r}{\Sym^n\Phi^{\tH}} & \Sym^n(\tH(X,\BQ))
\end{tikzcd}
\end{equation}
commutes. We refer to \cite[Sec.\ 4]{TaelmanDerHKLL} for more details and proofs. 

The process of associating to $\Phi^\SH$ the isometry $\Phi^{\tH}$ is non-trivial. Given an equivalence we cannot say directly how it will act on $\tilde{\textup{H}}(X,\Q)$, e.g.\ there is no obvious cycle associated to 
the kernel of the equivalence. We circumvent this obstacle by using the extended Mukai vector. 
\section{Square root of the Todd class}
\label{sec:sqrt_TODD}
Denote by $\overline{\tdd}$ the projection of the square root of the Todd 
class to the Verbitsky component $\SH(X,\BQ)$. The main goal of this section is to express this class in terms of the extended Mukai lattice. Throughout this section $X$ will be a fixed hyper-Kähler manifold of dimension $2n$ of arbitrary deformation type.

Let us define a number
\begin{equation}
\label{eq:defn_r}
r_X\coloneqq\frac{C(\Rc_2(X))2^{n}n!(2n-1)}{(2n)!24c_X}
\end{equation}
where $C(\Rc_2(X))$ is the constant from Proposition~\ref{prop:huybrechts_class_inv_small_defo} associated to the second Chern class $\Rc_2(X)$. The number relates the BBF form and the characteristic value via
\begin{equation}
\label{eq:number_r_relates_BBF_characteristic}
b(\omega,\omega)=2 r_X \lambda(\omega)
\end{equation}
for all $\omega \in \h^2(X,\BQ)$. 
\begin{lem}
\label{lem:degree_tdd_r}
	The following equality holds
	\[
	\int_X\tdd=c_X\frac{r_X^n}{n!}.
	\]
\end{lem}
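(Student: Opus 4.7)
The plan is to extract $\int_X\tdd$ from Proposition~\ref{prop:NP_tdd} by comparing the top-degree coefficients in $\omega$ on both sides of the identity
\[
\int_X \tdd \exp(\omega) = (1+\lambda(\omega))^n \int_X \tdd.
\]
Both sides, viewed as functions of $\omega \in \h^2(X,\BR)$, are polynomials, and I would compare the coefficient of $\omega^{2n}$ (equivalently, of $\lambda(\omega)^n$, since $\lambda$ is a quadratic form in $\omega$).

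On the left-hand side, the only contribution to the $\omega^{2n}$ coefficient comes from pairing the degree-zero part of $\tdd$ (which is $1$) with $\omega^{2n}/(2n)!$ from the exponential, since higher-degree components of $\tdd$ would require lower powers of $\omega$. Using the Fujiki relation
\[
\int_X \omega^{2n} = c_X \frac{(2n)!}{2^n n!} b(\omega,\omega)^n,
\]
this contribution evaluates to $\frac{c_X}{2^n n!} b(\omega,\omega)^n$. Substituting the identity $b(\omega,\omega) = 2 r_X \lambda(\omega)$ from \eqref{eq:number_r_relates_BBF_characteristic}, this becomes $\frac{c_X r_X^n}{n!}\lambda(\omega)^n$.

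On the right-hand side, the coefficient of $\lambda(\omega)^n$ in the binomial expansion of $(1+\lambda(\omega))^n \int_X\tdd$ is exactly $\int_X\tdd$. Equating the two coefficients gives the desired formula
\[
\int_X \tdd = c_X \frac{r_X^n}{n!}.
\]

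There is no real obstacle here: the only point requiring a moment of care is verifying that no other term on the LHS contributes to the top $\omega$-degree, which is immediate since $\tdd$ has components only in non-negative even degrees and $\omega^{2n}$ already saturates the top cohomological degree $4n$.
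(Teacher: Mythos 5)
Your proposal is correct and follows essentially the same route as the paper: the paper fixes a Kähler class $\omega$, substitutes $t\omega$ into Proposition~\ref{prop:NP_tdd}, and compares the coefficients of $t^{2n}$, which is exactly your top-degree comparison combined with the Fujiki relation and the identity $b(\omega,\omega)=2r_X\lambda(\omega)$. The observation that only the degree-zero part of $\tdd$ contributes to the top power of $\omega$ is the same key (and only) point of care in the paper's argument.
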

\begin{proof}
	Let $\omega \in \h^2(X,\BR)$ be a Kähler class and $t$ a formal variable. Proposition~\ref{prop:NP_tdd} gives
	\begin{equation}
	\label{eq:3.1.1}
	\int_X\tdd\exp(t\omega)=(1+\lambda(t\omega))^n\int_X\tdd.
	\end{equation}
	Both sides are even polynomials in $t$ of degree $2n$. 
	Comparing the coefficient in front of $t^{2n}$ in \eqref{eq:3.1.1} and using
	\[
	\tdd=\One + \textup{terms of higher degree}
	\]
	we obtain
	\[
	c_X\frac{(2n)!}{2^nn!}\frac{b(\omega,\omega)^n}{(2n)!}=\lambda(\omega)^n\int_X\tdd.
	\]
	Solving for $\int_X\tdd$ and employing \eqref{eq:number_r_relates_BBF_characteristic} yields the assertion.
\end{proof}
For the known examples of hyper-Kähler manifolds of dimension $2n$ we have
\[
r_X=
\begin{cases}
	\frac{n+3}{4} & \Kdrein \textup{ or } \mathrm{OG}^{10}\textup{-type,}\\
	\frac{n+1}{4} &\mathrm{Kum}^n \textup{ or } \mathrm{OG}^6\textup{-type.}	
\end{cases}
\]
Lemma~\ref{lem:degree_tdd_r} for $\Kdrein$- and $\mathrm{Kum}^n$-type hyper-Kähler manifolds was also obtained by Sawon \cite{SawonThesis}.
	
Denote for $0 \leq i \leq n$ by $\qi\in \SH^{4i}(X,\BQ)$ the class defined by the property
\[
\int_X \qi\omega^{2n-2i}=c_X\frac{(2n-2i)!}{2^{n-i}(n-i)!}b(\omega,\omega)^{n-i}
\]
for all $\omega \in \h^2(X,\BQ)$.  
\begin{lem}
	\label{lem:monodromy_inv_sh}
	Let $X$ be a hyper-Kähler manifold. Then the subspace of $\textup{SH}^{2i}(X,\Q)$ being of type $(i,i)$ on all small deformations is one-dimensional if $i$ is even and zero 
otherwise. These subspaces are generated by $\qi$. 
\end{lem}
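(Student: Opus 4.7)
My plan is to reduce the lemma to classical invariant theory for the orthogonal group $O(\h^2(X,\BR),b)$, using the Bogomolov--Verbitsky description \eqref{eq:Bogomolov_Verbitsky_description_SH} of the Verbitsky component together with the non-degeneracy of the Mukai pairing on $\SH(X,\BQ)$.

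Concretely, given $\mu \in \SH^{2i}(X,\BQ)$ remaining of Hodge type $(i,i)$ on all small deformations, I would invoke local Torelli for hyper-Kähler manifolds to know that small deformations of $X$ sweep out an open subset of the period domain for the BBF form. A Mumford--Tate style argument (the same reasoning that underlies Proposition~\ref{prop:huybrechts_class_inv_small_defo}, which is stated there for cohomological degree $4p$ but works in any degree) then forces the polynomial function $\omega \mapsto \int_X \mu\,\omega^{2n-i}$ on $\h^2(X,\BR)$ to be invariant under $O(\h^2(X,\BR),b)$. By classical orthogonal invariant theory, such a polynomial is a scalar multiple of $b(\omega,\omega)^{(2n-i)/2}$ if $2n-i$ is even, and vanishes identically if $2n-i$ is odd.

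Hence if $i$ is odd, then $\int_X \mu\,\omega^{2n-i}=0$ for every $\omega$, and polarisation gives $\int_X \mu\,\omega_1 \cdots \omega_{2n-i}=0$ for all $\omega_j\in \h^2(X,\BQ)$. Since \eqref{eq:Bogomolov_Verbitsky_description_SH} shows that $\SH^{4n-2i}(X,\BQ)$ is spanned by such products and the Mukai pairing on $\SH(X,\BQ)$ is non-degenerate, this forces $\mu=0$. If instead $i$ is even, the same polarisation shows that $\mu$ is completely determined by the unique scalar $C(\mu)\in \BR$ satisfying $\int_X \mu\,\omega^{2n-i}=C(\mu)\,b(\omega,\omega)^{n-i/2}$, so the space in question is at most one-dimensional; the class $\qi$, defined by an identity of exactly this form with an explicit non-zero constant, will then supply a generator. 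The main subtlety I expect is verifying the Mumford--Tate reduction to $O(\h^2,b)$-invariance in odd cohomological degrees, where Proposition~\ref{prop:huybrechts_class_inv_small_defo} is not stated, but its proof technique should apply mutatis mutandis.
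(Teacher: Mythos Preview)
Your proposal is correct and follows essentially the same route as the paper: the paper's proof is the single sentence ``This follows from Proposition~\ref{prop:huybrechts_class_inv_small_defo}'', and your argument is precisely an unpacking of what that sentence means, namely that a deformation-invariant Hodge class in $\SH^{2i}(X,\BQ)$ is determined by the $\RO(\h^2,b)$-invariant polynomial $\omega\mapsto\int_X\mu\,\omega^{2n-i}$, which by classical invariant theory is a multiple of $b(\omega,\omega)^{(2n-i)/2}$ or zero according to the parity of $i$.

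Your version is in fact more careful than the paper's: Proposition~\ref{prop:huybrechts_class_inv_small_defo} is only stated for classes in $\h^{4p}$, so strictly speaking it does not cover the vanishing assertion for odd $i$, and you correctly isolate this as the one point requiring extra justification. Your plan to rerun the underlying local Torelli / Mumford--Tate argument in odd degree is exactly right; alternatively, one can bypass it by noting that $\SH^{2i}(X,\BQ)$ is a quotient of $\Sym^i\h^2(X,\BQ)$ as an $\mathfrak{so}(\h^2,b)$-module, and that $\Sym^iV$ has no $\mathfrak{so}(V)$-invariants for odd $i$. Either way the conclusion is immediate. The only minor point you leave implicit is that $\Sq_i\neq 0$ (so the space is exactly one-dimensional rather than at most one-dimensional), but this is clear from its defining relation evaluated at a K\"ahler class.
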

\begin{proof}
	This follows from Proposition~\ref{prop:huybrechts_class_inv_small_defo}. 
\end{proof}
Using Lemma~\ref{lem:monodromy_inv_sh} let us write
\[
\overline{\tdd}=\Sq_0 +a_2 \Sq_2 + \dots + a_{2n-2}  \Sq_{2n-2} + \frac{r_X^n}{n!}\Sq_{2n} \in \SH(X,\BQ)
\]
for $a_{2i}\in \BQ$. We will now determine the remaining coefficients. 
\begin{lem}
\label{prop:coeff_tdd_1_n}
	For $1 \leq i \leq n$ we have
	\[
	a_{2i}=\frac{r_X^i}{i!}.
	\]
\end{lem}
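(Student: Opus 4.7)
The plan is to pin down the coefficients $a_{2i}$ by pairing $\overline{\tdd}$ against all powers $\exp(\omega)$ for $\omega \in \h^2(X,\BR)$ and comparing two explicit polynomial expressions in $b(\omega,\omega)$. Since $\exp(\omega)$ lies in $\SH(X,\BQ)$, integration against it only sees the Verbitsky component, so
\[
\int_X \tdd \exp(\omega) = \int_X \overline{\tdd}\exp(\omega).
\]
By Proposition~\ref{prop:NP_tdd} and Lemma~\ref{lem:degree_tdd_r}, the left-hand side equals $c_X \frac{r_X^n}{n!}(1+\lambda(\omega))^n$, and by the defining property of $\Sq_{2i}$ (only the term $\omega^{2n-2i}/(2n-2i)!$ of $\exp(\omega)$ contributes to the integral against $\Sq_{2i}$), the right-hand side equals
\[
\sum_{i=0}^{n} a_{2i}\cdot c_X\frac{b(\omega,\omega)^{n-i}}{2^{n-i}(n-i)!},
\]
with the convention $a_0 = 1$ and $a_{2n} = r_X^n/n!$.

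Next, I would rewrite $(1+\lambda(\omega))^n$ as a polynomial in $b(\omega,\omega)$ using the key identity \eqref{eq:number_r_relates_BBF_characteristic}, namely $\lambda(\omega) = b(\omega,\omega)/(2r_X)$. Expanding binomially,
\[
c_X\frac{r_X^n}{n!}\sum_{j=0}^{n}\binom{n}{j}\frac{b(\omega,\omega)^j}{(2r_X)^j} = c_X\sum_{i=0}^{n}\frac{r_X^i}{i!\,(n-i)!\,2^{n-i}}\,b(\omega,\omega)^{n-i},
\]
after reindexing $j = n-i$. Comparing term-by-term with the expression above—which is legitimate since $b$ is non-degenerate so the values $b(\omega,\omega)$ fill out $\BR$ as $\omega$ varies, making this an identity of polynomials—yields $a_{2i} = r_X^i/i!$ for every $0 \le i \le n$.

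The identification is really just bookkeeping: the content is already fully in Proposition~\ref{prop:NP_tdd} and Lemma~\ref{lem:degree_tdd_r}, together with the decomposition of $\overline{\tdd}$ against the monodromy-invariant basis $\{\Sq_{2i}\}$ furnished by Lemma~\ref{lem:monodromy_inv_sh}. The only place one needs to be careful is the book-keeping of $r_X$ versus $\lambda(\omega)$ via \eqref{eq:number_r_relates_BBF_characteristic}, and checking that the boundary terms $i=0$ and $i=n$ match the already-known values—they do, which is a consistency check that the coefficients and normalizations have all been correctly tracked.
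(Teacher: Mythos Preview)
Your proof is correct and follows essentially the same approach as the paper: both expand $\int_X \tdd \exp(\omega)$ via Proposition~\ref{prop:NP_tdd} and Lemma~\ref{lem:degree_tdd_r} on one side and via the defining property of the $\Sq_{2i}$ on the other, then match coefficients. The only cosmetic difference is that the paper introduces a formal variable $t$ (writing $\exp(t\omega)$) and compares coefficients of $t^{2n-2i}$, whereas you compare coefficients of $b(\omega,\omega)^{n-i}$ directly and justify this by noting that $b$ takes all real values.
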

\begin{proof}
	We use again \eqref{eq:3.1.1} and this time compare the coefficients in front of $t^{2n-2i}$. This reads
	\[
	a_{2i}c_X\frac{b(\omega,\omega)^{n-i}}{2^{n-i}(n-i)!}={n \choose n-i}\frac{r_X^ic_X}{2^{n-i}n!}b(\omega,\omega)^{n-i}.\qedhere
	\]
\end{proof}
Recall the isometric embedding $\psi \colon \SH(X,\BQ) \hookrightarrow \Sym^n(\tH(X,\BQ))$ and the orthogonal projection $T \colon \Sym^n(\tH(X,\BQ)) \to \SH(X,\BQ)$. The class $\overline{\tdd}$ has the following expression.
\begin{prop}
\label{prop:todd_in_alphabeta}
Let $X$ be a hyper-Kähler manifold of dimension $2n$. Then
\[
\overline{\tdd}=T\left(\frac{(\alpha + r_X\beta)^n}{n!}\right) \in \SH(X,\BQ).
\]
\end{prop}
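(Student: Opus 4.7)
The plan is to combine the two preceding lemmas with a computation using the isometry $\psi$ and the $\Fg(X)$-equivariance, reducing the statement to a single pairing identity that I verify by a short multinomial calculation. First I would apply Lemma~\ref{lem:degree_tdd_r} and Lemma~\ref{prop:coeff_tdd_1_n} to write
\[
\overline{\tdd} = \sum_{i=0}^n \frac{r_X^i}{i!}\, \Sq_{2i},
\]
and expand the right-hand side of the proposition as
\[
T\!\left(\frac{(\alpha + r_X\beta)^n}{n!}\right) = \sum_{k=0}^n \frac{r_X^k}{k!\,(n-k)!}\, T(\alpha^{n-k}\beta^k).
\]
Matching coefficients of $r_X^k$ reduces the claim to the identities $T(\alpha^{n-k}\beta^k) = (n-k)!\,\Sq_{2k}$ for each $0 \le k \le n$.

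Next I would use that $\psi$ is an isometric embedding and $T$ the orthogonal projection onto $\SH(X,\BQ)$, so that
\[
b_{\SH}\bigl(T(\alpha^{n-k}\beta^k),\, y\bigr) = b_{[n]}\bigl(\alpha^{n-k}\beta^k,\, \psi(y)\bigr)
\]
for every $y \in \SH^{4n-4k}(X,\BQ)$. Because $\h^\ast(X,\BQ) = \SH(X,\BQ) \oplus \SH(X,\BQ)^{\perp}$ is an orthogonal decomposition with respect to the intersection pairing, $b_{\SH}$ remains non-degenerate on complementary graded pieces, and by the Bogomolov--Verbitsky presentation \eqref{eq:Bogomolov_Verbitsky_description_SH} together with polarization the pure powers $\omega^{2n-2k}$ for $\omega \in \h^2(X,\BQ)$ span $\SH^{4n-4k}(X,\BQ)$. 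It therefore suffices to prove
\[
b_{\SH}\bigl(T(\alpha^{n-k}\beta^k),\, \omega^{2n-2k}\bigr) = (n-k)!\,b_{\SH}\bigl(\Sq_{2k},\,\omega^{2n-2k}\bigr)
\]
for an arbitrary $\omega \in \h^2(X,\BQ)$.

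To compute $\psi(\omega^{2n-2k})$ I would invoke the $\Fg(X)$-equivariance of $\psi$ together with $\psi(1)=\alpha^n/n!$, which yields $\psi(\omega^{2n-2k}) = e_\omega^{2n-2k}(\alpha^n/n!)$, where the Lefschetz operator $e_\omega$ acts on $\Sym^n(\tH(X,\BQ))$ as a derivation sending $\alpha \mapsto \omega$, $\mu \mapsto b(\omega,\mu)\beta$, and $\beta \mapsto 0$. The decisive simplification is that, since $\omega$ is orthogonal in $\tH(X,\BQ)$ to both $\alpha$ and $\beta$, only the monomial $\alpha^k\beta^{n-k}$ in the expansion of $e_\omega^{2n-2k}(\alpha^n/n!)$ can contribute to the pairing with $\alpha^{n-k}\beta^k$. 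A short multinomial count produces its coefficient $\frac{(2n-2k)!}{k!\,(n-k)!\,2^{n-k}}\,b(\omega,\omega)^{n-k}$, and a count of the permutations $\sigma\in\FS_n$ that match each $\alpha$ in the first factor with a $\beta$ in the second (and vice versa) gives $b_{[n]}(\alpha^{n-k}\beta^k,\alpha^k\beta^{n-k}) = c_X\,(n-k)!\,k!$. Combined with the defining formula of $\Sq_{2k}$, this produces the matching constant $(n-k)!$ and completes the proof.

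The main obstacle I anticipate is keeping track of $e_\omega^{2n-2k}(\alpha^n/n!)$, which in general is a large multinomial sum. The orthogonality of $\alpha$, $\beta$ and $\omega$ in $\tH(X,\BQ)$ is precisely what makes the computation tractable, trimming the relevant data down to a single coefficient and a single permutation count.
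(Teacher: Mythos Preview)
Your proof is correct and follows essentially the same route as the paper. The paper separates out the identity $T(\alpha^{n-k}\beta^k)=(n-k)!\,\Sq_{2k}$ as its own lemma (Lemma~\ref{lem:relation_q_alphabeta}) and then simply combines it with Lemmas~\ref{lem:degree_tdd_r} and~\ref{prop:coeff_tdd_1_n}, whereas you reprove that identity inline via the same isometry/$\Fg(X)$-equivariance argument and the same multinomial and permutation counts; the mathematical content is identical.
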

If one ignores the orthogonal projection $T$ for the moment, then the proposition says that (the projection of) the square root of the Todd class can be expressed as the $n$-th power of a linear polynomial. Note that the orthogonal projection $T$ really is necessary since $\alpha^{n-i}\beta^i$ is not in the kernel of the Laplacian operator $\Delta$ for $1\leq i \leq n-1$. 

The key step to prove Proposition~\ref{prop:todd_in_alphabeta} is to relate $\alpha^{n-i}\beta^i$ with the classes $\qi$. By definition we have
\[
T(\alpha^n)=n! \One.
\]
In general the connection is given by the following.
\begin{lem}
\label{lem:relation_q_alphabeta}
	For $1\leq i \leq n$ we have
	\[
	T(\alpha^{n-i}\beta^i)= (n-i)!\qi.
	\]
\end{lem}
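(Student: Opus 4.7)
The plan is to combine a monodromy/Hodge-theoretic rigidity argument to pin down the form of $T(\alpha^{n-i}\beta^i)$, and then a pairing computation to fix the constant.

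First I would argue that $T(\alpha^{n-i}\beta^i) = c_i\, \qi$ for some $c_i \in \BQ$. The classes $\alpha,\beta \in \tH(X,\BQ)$ are defined in a manner independent of the complex structure, so the monodromy action on cohomology extends to $\Sym^n(\tH(X,\BQ))$ fixing both $\alpha$ and $\beta$. Since $\psi$ is canonically normalized by $\psi(1) = \alpha^n/n!$ and is a map of $\Fg(X)$-modules, it is monodromy-equivariant; hence so is the orthogonal projection $T$. Consequently $T(\alpha^{n-i}\beta^i) \in \SH^{4i}(X,\BQ)$ is of type $(2i,2i)$ on all small deformations, and Lemma~\ref{lem:monodromy_inv_sh} forces it into the one-dimensional line spanned by $\qi$.

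To determine $c_i$ I would test against $\omega^{2n-2i}$ for a generic $\omega \in \h^2(X,\BQ)$. Because $\omega^{2n-2i} \in \SH(X,\BQ)$ and $\psi$ is an isometry, the adjointness of $T$ and $\psi$ yields
\[
c_i \int_X \qi\, \omega^{2n-2i} = b_{\SH}\bigl(T(\alpha^{n-i}\beta^i),\omega^{2n-2i}\bigr) = b_{[n]}\bigl(\alpha^{n-i}\beta^i,\psi(\omega^{2n-2i})\bigr).
\]
The left-hand side is $c_i c_X \frac{(2n-2i)!}{2^{n-i}(n-i)!}b(\omega,\omega)^{n-i}$ by the defining property of $\qi$.

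For the right-hand side the key trick is to compute $\psi(\omega^{2n-2i})$ at once via the exponential. Since $e_\omega\alpha=\omega$, $e_\omega\omega=b(\omega,\omega)\beta$, $e_\omega\beta=0$, one has $\exp(e_\omega)\alpha = \alpha+\omega+\tfrac{1}{2}b(\omega,\omega)\beta$; as $e_\omega$ acts by derivations on $\Sym^n\tH$ and $\psi(1)=\alpha^n/n!$, it follows that
\[
\psi\bigl(\exp(\omega)\bigr) = \frac{1}{n!}\Bigl(\alpha+\omega+\tfrac{1}{2}b(\omega,\omega)\beta\Bigr)^n.
\]
Extracting the component of degree $2(2n-2i)$ gives $\psi(\omega^{2n-2i})$ as a sum of monomials $\alpha^p\omega^q\beta^r$ with $p+q+r=n$ and $q+2r=2n-2i$. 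Now I would observe that in the pairing $b_{[n]}(\alpha^{n-i}\beta^i,\alpha^p\omega^q\beta^r)$ only monomials with $q=0$ survive, since $\omega \perp \alpha,\beta$ and there are no $\omega$'s on the left, and $\tbb(\alpha,\alpha)=\tbb(\beta,\beta)=0$ forces each $\alpha$ on the left to pair with a $\beta$ on the right and vice versa; this forces $p=i$, $r=n-i$. A quick count of matchings shows that the permutation sum over $\FS_n$ produces $i!\,(n-i)!$ nonzero terms, each contributing $(-1)^n$ from the $n$ factors of $\tbb(\alpha,\beta)=-1$, so
\[
b_{[n]}\bigl(\alpha^{n-i}\beta^i,\alpha^i\beta^{n-i}\bigr)=c_X\,i!\,(n-i)!.
\]
Multiplying by the multinomial coefficient of the surviving term in $\psi(\omega^{2n-2i})$ and comparing with the left-hand side gives $c_i=(n-i)!$.

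The only real obstacle is the bookkeeping in the pairing and in the expansion of $\psi(\omega^{2n-2i})$; both are tamed by the exponential identity above, which sidesteps an explicit inductive computation of $\psi(\omega^k)$.
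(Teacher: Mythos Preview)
Your argument is correct and follows the same overall architecture as the paper's proof: reduce to computing $b_{[n]}(\alpha^{n-i}\beta^i,\psi(\omega^{2n-2i}))$, observe that only the $\alpha^i\beta^{n-i}$ summand of $\psi(\omega^{2n-2i})$ contributes, and evaluate $b_{[n]}(\alpha^{n-i}\beta^i,\alpha^i\beta^{n-i})=c_X\,i!(n-i)!$.

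There are two genuine differences worth noting. First, the paper extracts the coefficient of $\alpha^i\beta^{n-i}$ in $e_\omega^{2n-2i}(\alpha^n/n!)$ by an explicit induction that identifies the coefficients with those of the Bessel polynomials; your exponential identity $\psi(\exp(\omega))=\frac{1}{n!}(\alpha+\omega+\tfrac{1}{2}b(\omega,\omega)\beta)^n$, obtained from the fact that the exponential of a derivation is an algebra automorphism, yields the same coefficient $\frac{(2n-2i)!}{2^{n-i}i!(n-i)!}b(\omega,\omega)^{n-i}$ immediately via the multinomial expansion and is cleaner. Second, your preliminary step invoking Lemma~\ref{lem:monodromy_inv_sh} to place $T(\alpha^{n-i}\beta^i)$ on the line $\BQ\,\qi$ is correct but not needed: since the powers $\omega^{2n-2i}$ span $\SH^{4n-4i}(X,\BQ)$ and the Mukai pairing is non-degenerate, the pairing test already determines $T(\alpha^{n-i}\beta^i)$ uniquely, which is how the paper proceeds.
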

\begin{proof}
	By definition of the Mukai pairing on $\SH(X,\BQ)$ and the defining property of $\qi$ the assertion of the lemma is equivalent to
	\[
	b_{\SH}(\omega^{2n-2i},T(\alpha^{n-i}\beta^i))=\int_X \omega^{2n-2i}  T(\alpha^{n-i}\beta^i)=c_X \frac{(2n-2i)!}{2^{n-i}}b(\omega,\omega)^{n-i}
	\]
	for all $\omega \in \h^2(X,\BQ)$. The embedding $\psi \colon \SH(X,\BQ) \hookrightarrow \Sym^n(\tH(X,\BQ))$ is an isometry and $T$ is its orthogonal split. The above is therefore equivalent to 
	\[
	b_{[n]}(\psi(\omega^{2n-2i}),\alpha^{n-i}\beta^i)= c_X\frac{(2n-2i)!}{2^{n-i}}b(\omega,\omega)^{n-i}
	\]
	for all $\omega\in \h^2(X,\BQ)$. Since $\psi$ is a morphism of $\mathfrak{g}(X)$-modules, we have 
	\[
	\psi(\omega^{2n-2i})=\psi(e_{\omega}^{2n-2i}(\One))=e_{\omega}^{2n-2i}(\psi(\One)) \in \Sym^n(\tH(X,\BQ))
	\]
	which is a $\BQ$-linear combination of tensors of the form $\alpha^r\omega^s\beta^t$. Only the tensor $\alpha^i\beta^{n-i}$ pairs non-trivially with $\alpha^{n-i}\beta^{i}$. We claim that
	\[
	e_{\omega}^{2n-2i}(\psi(\One))=\frac{(2n-2i)!}{2^{n-i}(n-i)!i!} b(\omega,\omega)^{n-i} \alpha^i\beta^{n-i}+\dots.
	\]
	
	Indeed, let us choose $N \gg n$ and consider the terms $\alpha^r\omega^s\beta^t$ for $r+s+t=N$. We define the order of such a term as $r$, i.e.\ the exponent of $\alpha$. For general $j \geq 0$ the element $e_\omega^j(\psi(\One))$ decomposes
	\[
	e_\omega^j(\psi(\One)) = a^j_1\left(\frac{\alpha^{N-j}\omega^j}{(n-j)!}\right) + a^j_2\left(\frac{\alpha^{N-j+1}\omega^{j-2}\beta}{(n-j+21)!}\right) + \dots + a^j_{ \lfloor{\frac{j}{2}}\rfloor}\left(\frac{\alpha^{N- \lfloor{\frac{j}{2}}\rfloor} \omega^{j-2 \lfloor{\frac{j}{2}}\rfloor} \beta^{ \lfloor{\frac{j}{2}}\rfloor} }{(N- \lfloor{\frac{j}{2}}\rfloor)!}\right)
	\]
	according to the order of the terms that appear. A straight forward proof by induction shows that 
	\[
	a^j_{k+1} = \frac{ (\lfloor{\frac{j}{2}}\rfloor +k)!}{(\lfloor{\frac{j}{2}}\rfloor -k)!k!2^k}
	\]
	which are the coefficients of the Besse polynomials. This then yields the claim. 
	
	Observing that
	\[
	b_{[n]}(\alpha^i\beta^{n-i},\alpha^{n-i}\beta^i)=c_X (n-i)!i!
	\]
	finishes the proof.
\end{proof}
The above lemma helps to understand the two extra classes $\alpha$ and $\beta$ in the extended Mukai lattice $\tH(X,\BQ)$. Rather than trying to identify the classes $\alpha$ and $\beta$ in $\tH(X,\BQ)$ with a single element in $\SH(X,\BQ)$ as in the case of K3 surfaces one should think simultaneously of all the powers $\alpha^{n-i}\beta^i$ as (multiples of) the powers of the BBF-form $\Sq_2^i \in \SH^{4i}(X,\BQ)$. 
\begin{proof}[Proof of Proposition~\ref{prop:todd_in_alphabeta}]
	This follows immediately from Lemma~\ref{lem:degree_tdd_r}, Lemma~\ref{prop:coeff_tdd_1_n} and Lemma~\ref{lem:relation_q_alphabeta}.
\end{proof}
\begin{rmk}
	Proposition~\ref{prop:todd_in_alphabeta} raises the analogous question for $\overline{\td}$. The formulas in the known examples are as follows. If $X$ is of $\Kdrein$ or $\mathrm{OG}^{10}$-type of dimension $2n$, then
	\[
	\overline{\td}=T\left( \frac{(\alpha+2\beta)\cdots (\alpha +(n+1)\beta)}{n!} \right) \in \SH(X,\BQ)
	\]
	and if $X$ is of $\mathrm{Kum}^n$ or $\mathrm{OG}^6$-type of dimension $2n$, then
	\[
	\overline{\td}=T\left( \frac{(\alpha+\beta)\cdots (\alpha +n\beta)}{n!} \right) \in \SH(X,\BQ).
	\]
	These expressions are obtained by an analogous approach as above using this time the known forms of the Riemann--Roch polynomials \cite{EGL,NieperWissHRRonIHS,OrtizRRPolynomials}. There is a formula for $\td$ similar in fashion as the one given in Proposition~\ref{prop:NP_tdd} using Chebyshev polynomials, see \cite[Thm.\ 5.2]{NieperWissHRRonIHS}.
\end{rmk}
\section{Extended Mukai vector}
\label{sec:vector_line_bundles}
The previous section enables us to define a Mukai vector for interesting objects with image in the extended Mukai lattice. We will distinguish two 
cases using the self-intersection of the vector under consideration.

\subsection{Square $-2r_X$}
Let $X$ be a hyper-Kähler manifold of dimension $2n$. 
A line bundle $\CL \in \Pic(X)$ naturally induces an auto-equivalence
\[
\mathsf{M}_{\CL} \coloneqq \_ \otimes \CL \in \Aut(\Db(X)).
\]
Its action $\SM_\CL^{\h}$ on singular cohomology is given by multiplication with the Chern character of $\CL$, i.e.\
\[
\SM_\CL^\h=\_ \cdot \exp(\lambda) \in \End(\h^\ast(X,\BQ))
\]
where $\lambda=\Rc_1(\CL)\in \h^2(X,\BZ)$. Furthermore, denote by $B_\lambda\in \mathrm{O}(\tilde{\textup{H}}(X,\Q))$ the isometry defined by
\[
B_{\lambda}(r\alpha + \mu + s\beta)=r\alpha + \mu +r\lambda +\left(s + b(\lambda,\mu) + r\frac{b(\lambda,\lambda)}{2} \right) \beta. 
\]
As checked in \cite[Prop.\ 3.2]{TaelmanDerHKLL} we have $\SM_\CL^{\tH}=B_{\lambda}$, i.e.\ if we restrict $\SM_\CL^{\h}$ to $\SH(X,\BQ) \subset \h^{\ast}(X,\BQ)$, then it is given by the natural action of $B_\lambda$ on $\Sym^n(\tH(X,\BQ))$ via the diagram
\begin{equation}
	\label{diag:maindiagram}
	\begin{tikzcd}
		\SH(X,\BQ)\ar{r}{\epsilon(\SM_\CL^{\tH})\SM_\CL^{\SH}}\ar{d}{\psi} &\SH(X,\BQ) \ar{d}{\psi} \\
		\Sym^n(\tH(X,\BQ)) \ar{r}{\Sym^nB_\lambda} & \Sym^n(\tH(X,\BQ))
	\end{tikzcd}
\end{equation}
(where we set $\epsilon(\Phi^{\tH})=1$ for all equivalences if $n$ is odd). By definition 
\[
\alpha^n=\psi(n!\One)=\psi(n!\ch(\CO_X))\in \Sym^n(\tH(X,\BQ))
\]
which yields 
\[
\psi(\ch(\CL))=\frac{B_{\lambda}(\alpha)^n}{n!}=
\frac{\left(\alpha + \lambda + \frac{b(\lambda, \lambda)}{2} \beta \right)^n}{n!} \in \Sym^n(\tH(X,\BQ)).
\]

We will upgrade this to the Mukai vector. 
An immediate consequence from Proposition~\ref{prop:todd_in_alphabeta} is 
that for the trivial line bundle $\CO_X \in \Pic(X)$ and its Mukai vector 
$v(\CO_X)$ one has
\[
\overline{v(\CO_X)}=T\left(\frac{(\alpha+r_X\beta)^n}{n!}\right) \in \SH(X,\BQ)
\]
where $\overline{(\_)}$ denotes again the projection from the cohomology $\h^\ast(X,\BQ)$ to the Verbitsky component $\SH(X,\BQ)$. Note that \eqref{diag:main_diag_sec2} also induces a commutative diagram
\begin{equation}
	\label{diag:commut_T_epsilon}
	\begin{tikzcd}
		\SH(X,\BQ)\ar{r}{\epsilon(\Phi^{\tH})\Phi^{\SH}} & \SH(X,\BQ)  \\
		\Sym^n(\tH(X,\BQ)) \ar{r}{\Sym^n\Phi^{\tH}}\ar{u}{T} & \Sym^n(\tH(X,\BQ)) \ar{u}{T}
	\end{tikzcd}
\end{equation}
for all equivalences $\Phi\in \Aut(\Db(X))$. Since $\SM_\CL(\CO_X)= \CL$ and by the compatibility of the cohomological Fourier--Mukai transform $\SM_\CL^{\h}(v(\CO_X))=v(\CL)$ we infer that 
\begin{equation}
	\label{eq:comp_linebundles_vector_SH}
	\overline{v(\CL)}=T\left(\frac{(\alpha+\lambda+(r_X+\frac{b(\lambda,\lambda)}{2})\beta)^n}{n!}\right) \in \SH(X,\BQ)
\end{equation}
for all line bundles $\CL\in \Pic(X)$.

\begin{defn}
\label{defn:extended_Mukai_line_bundles}
	For $\CL\in \Pic(X)$ we define the \textit{extended Mukai vector} of $\CL$ with $\Rc_1(\CL)=\lambda$ as 
	\[
	\tilde{v}(\CL)=\alpha + \lambda + \left( r_X + \frac{b(\lambda,\lambda)}{2} \right)\beta \in \tH(X,\BQ).
	\]
\end{defn}
With this definition, we have
\begin{equation}
\label{eq:compatibility_ext_vector_lb}
\overline{v(\CL)}=T\left( \frac{\tilde{v}(\CL)^n}{n!}\right) \in \SH(X,\BQ).
\end{equation}
Formula \eqref{eq:compatibility_ext_vector_lb} is a helpful tool to deduce properties of $\Phi^{\tH}$ and compute its action on $\tH(X,\BQ)$ for an auto-equivalence $\Phi\in \Aut(\Db(X))$. Here is one example. 

If $n$ is even, then the functoriality of the representation $\rho^{\tH}\colon \Aut(\Db(X))\to \RO(\tH(X,\BQ))$ depends on the determinant of the isometry of $\tH(X,\BQ)$. A useful criterion to calculate $\epsilon(\Phi^{\tH})$ for an auto-equivalence $\Phi \in \Aut(\Db(X))$ is the following. 

\begin{lem}
	\label{lem:determinant_extended_rank}
	Let $X$ be a projective hyper-Kähler manifold with $n$ even and $b_2(X)$ odd. Assume that a line bundle $\CL$ is sent under an auto-equivalence $\Phi$ to an object $\CF$ with positive rank. Then $\epsilon(\Phi^{\tH})=\det(\Phi^{\tH})=1$. 
\end{lem}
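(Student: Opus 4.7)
The strategy is to extract $\epsilon(\Phi^{\tH})$ from the rank piece of the cohomological action, using that the extended Mukai vector of a line bundle lets us compute both sides of the Fourier--Mukai transform explicitly. The plan is to start from the defining relation \eqref{eq:compatibility_ext_vector_lb}, $\overline{v(\CL)} = T(\tilde{v}(\CL)^n/n!)$, and to apply the commutative diagram \eqref{diag:commut_T_epsilon} to the pure tensor $\tilde{v}(\CL)^n/n! \in \Sym^n(\tH(X,\BQ))$. Since $\Sym^n\Phi^{\tH}$ acts on a pure $n$-th power simply by raising $\Phi^{\tH}$ to the $n$-th power, and since the standard compatibility between the cohomological Fourier--Mukai transform and the Mukai vector gives $\Phi^{\SH}(\overline{v(\CL)}) = \overline{v(\CF)}$, I obtain
\[
T\!\left(\frac{\Phi^{\tH}(\tilde{v}(\CL))^n}{n!}\right) = \epsilon(\Phi^{\tH})\,\overline{v(\CF)}.
\]

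The next step is to isolate the degree-zero component of this identity. Writing $\Phi^{\tH}(\tilde{v}(\CL)) = a\alpha + \mu + b\beta$ with $a, b \in \BQ$ and $\mu \in \h^2(X,\BQ)$, only the monomial $a^n\alpha^n$ survives in degree zero when expanding $(a\alpha + \mu + b\beta)^n$, because $\mu$ and $\beta$ have strictly positive cohomological degree. Applying $T$ and using $T(\alpha^n) = n!\One$ (which comes from the normalisation $\psi(\One) = \alpha^n/n!$ fixed in Section~\ref{subsec_verb_comp_prelim}), the left-hand side has degree-zero part equal to $a^n\One$. On the right-hand side, the degree-zero part of $\overline{v(\CF)}$ is $\textup{rk}(\CF)\,\One$. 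Invoking the injectivity of $T$ in degree zero (Remark~\ref{rmk:Injectivity_of_T}), I conclude
\[
a^n = \epsilon(\Phi^{\tH})\,\textup{rk}(\CF).
\]

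Finally, because $n$ is even the left-hand side is non-negative, while by hypothesis $\textup{rk}(\CF) > 0$; this forces $\epsilon(\Phi^{\tH}) = +1$, and as recalled in Section~\ref{subsec:prelim_der_equi_tael} one has $\det(\Phi^{\tH}) = \epsilon(\Phi^{\tH})$, so the lemma follows. The proof is essentially a bookkeeping exercise and I do not anticipate a serious obstacle; the one delicate point is to track the placement of $\epsilon(\Phi^{\tH})$ in diagram \eqref{diag:commut_T_epsilon} correctly, and positivity of the rank is precisely the input that rules out $a = 0$ and pins the sign down.
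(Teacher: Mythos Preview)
Your argument is correct and follows essentially the same route as the paper's own proof: both use \eqref{eq:compatibility_ext_vector_lb} together with the commutative square \eqref{diag:commut_T_epsilon}, then read off the degree-zero coefficient to compare $a^n$ with $\epsilon(\Phi^{\tH})\,\mathrm{rk}(\CF)$ and conclude from $n$ even and $\mathrm{rk}(\CF)>0$. Your write-up simply spells out the steps (the expansion of $(a\alpha+\mu+b\beta)^n$ and the identification $T(\alpha^n)=n!\,\One$) more explicitly than the paper does; the only superfluous citation is Remark~\ref{rmk:Injectivity_of_T}, since what you actually use is that $T$ is graded, not that it is injective in degree zero.
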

\begin{proof}
	The class $\tilde{v}(\CL)^n/n!\in \Sym^n(\tH(X,\BQ))$ maps under $T$ to $\overline{v(\CL)}$. We know that $\Phi^{\h}$ sends $v(\CL)$ to $v(\CF)$. 
Therefore $\overline{v(\CL)}$ is sent to $\overline{v(\CF)}$ under $\Phi^{\SH}$. 	
	Since $n$ is even, the coefficient in front of $\alpha^n$ in the expression $(\Phi^{\tH}(\tilde{v}(\CL)))^n$ must be positive. The commutativity of \eqref{diag:commut_T_epsilon} forces $\epsilon(\Phi^{\tH})=1$. 
\end{proof}
Instead of twists with line bundles, we can also use other auto-equivalences to define an extended Mukai vector for a larger set of objects. 
\begin{defn}
\label{defn:extended_Mukai_general}
Let $X$ be a projective hyper-Kähler manifold of dimension $2n$ with 
$n$ odd and $\CE\in \Db(X)$ an object such that there exists an auto-equivalence $\Phi \in \Aut(\Db(X))$ with $\Phi(\CO_X)\cong\CE$. We define the 
\textit{extended Mukai vector} of $\CE$ as
\[
\tilde{v}(\CE) \coloneqq \Phi^{\tH}(\tilde{v}(\CO_X))\in \tH(X,\BQ).
\]
\end{defn}
This definition does not depend on the chosen equivalence. For such an object $\CE$ we have the equality
\begin{equation}
\label{eq:Muk_vec_vec_objects}
\overline{v(\CE)}=T\left(\frac{\tilde{v}(\CE)^n}{n!}\right). 
\end{equation}
We will say that such objects are in the \textit{$\CO_X$-orbit}. With this terminology, objects in the $\CO_X$-orbit are \textit{cohomologically linearisable} as in \eqref{eq:Muk_vec_vec_objects}, which means that they admit an extended Mukai vector in the extended Mukai lattice, see also Definition~\ref{def:general_extended}. 
\begin{rmk}
	Ideally one would like to give an analogous definition in the case that $n$ is even and $b_2(X)$ is odd. However, one must be cautious since Definition~\ref{defn:extended_Mukai_general} is not well-defined in this case 
and \eqref{eq:Muk_vec_vec_objects} may not serve as a defining property ($v^n=(-v)^n$ for all elements $v\in \tH(X,\BQ)$). The problem is the extra sign discussed in Section~\ref{subsec:prelim_der_equi_tael}. In other 
words, associating to the natural isometry $\Phi^{\SH}$ an isometry $\Phi^{\tH}$ inducing  $\Phi^{\SH}$ as done in Section~\ref{subsec:prelim_der_equi_tael} is not natural and leads to considering sign conventions when defining an extended Mukai vector. We will give an adhoc definition. 
\end{rmk}

Let $X$ be a projective hyper-Kähler manifold of dimension $2n$ with 
$n$ even and $b_2(X)$ odd and choose once and for all a very general Kähler class $\omega \in \h^2(X,\BR)$. Consider an object $\CE \in \Db(X)$ such that there is an equivalence $\Phi\in \Aut(\Db(X))$ satisfying
\[
\Phi(\CO_X)\cong \CE.
\]
If the rank of $\CE$ is strictly positive, then Lemma~\ref{lem:determinant_extended_rank} forces $\epsilon(\Phi^{\tH})=1$ and for negative rank we obtain $\epsilon(\Phi^{\tH})=-1$. This motivates the following. 
\begin{defn}
We say that $\CE$ is \textit{positive} if $\epsilon(\Phi^{\tH})=1$ and \textit{negative} if $\epsilon(\Phi^{\tH})=-1$. 
\end{defn}
This definition is well-defined, i.e.\ it is independent of the chosen equivalence $\Phi$. 
Keeping the above notation, let us denote
\[
v=\Phi^{\tH}(\tilde{v}(\CO_X))=r\alpha + \lambda + s\beta.
\]
We define the \textit{signum $\sgn(v)\in \{\pm 1\}$} of the vector $v$. 
For $r\neq 0$ we set $\sgn(v)\coloneqq \sgn(r)$ as the signum of the number $r$.  If $r=0$, then the self-pairing of $\tilde{v}(\CO_X)$ forces $\lambda\neq 0$ and $c=b(\omega,\lambda)\neq 0$ as the Kähler class $\omega$ was assumed to be very general. We define in this case $\sgn(v)\coloneqq \sgn(c)$. 
\begin{defn}
\label{defn:vector_OX_evenN}
	The \textit{extended Mukai vector} of $\CE$ is
	\[
	\tilde{v}(\CE)\coloneqq \epsilon(\Phi^{\tH})\sgn(v)v.
	\]
\end{defn}
We also say that such objects are in the \textit{$\CO_X$-orbit}. The extended Mukai vector $\tilde{v}(\CE)$ satisfies a version of \eqref{eq:Muk_vec_vec_objects} namely
\begin{equation}
\overline{v(\CE)}=\epsilon(\Phi^{\tH})T\left( \frac{\tilde{v}(\CE)^n}{n!} \right).
\end{equation}
That is, objects in the $\CO_X$-orbit are cohomologically linearisable, but for negative objects we have to add an extra sign.
\begin{rmk}
	The definition agrees with Definition~\ref{defn:extended_Mukai_line_bundles} for line bundles. The motivation for this definition comes from the notion of a positive vector in the theory of moduli spaces of stable sheaves for K3 surfaces as in \cite[Def.\ 0.1]{YoshiokaAbelian}. Moreover, we expect that the choice of a Kähler 
class is not important, i.e.\ the sign of $b(\omega,\lambda)$ is independent of the chosen Kähler class. In all examples that we have calculated for $\Kdrein$-type hyper-Kähler manifolds $X$ the class $\lambda$ is always a multiple of the class Poincaré dual to a line in a projective space $\BP^n\subset X$, therefore bounding the Kähler cone. For all our applications in subsequent chapters the sign choices will not matter.
\end{rmk}
Definition~\ref{defn:vector_OX_evenN} for the case of even $n$ is up to sign compatible with derived equivalences, i.e.\ for $\CE$ and $\CF$ two objects in the $\CO_X$-orbit and a derived equivalence $\Phi\in \Aut(\Db(X))$ with $\Phi(\CE)\cong \CF$ we have
\begin{equation}
\label{eq:functoriality_O_vectorizable_sign}
\Phi^{\tH}(\tilde{v}(\CE))=\pm \tilde{v}(\CF) \in \tH(X,\BQ)
\end{equation}
respectively
\begin{equation}
\label{eq:functoriality_O_vectorizable_sign_npower}
(\Phi^{\tH}(\tilde{v}(\CE)))^n= (\tilde{v}(\CF))^n \in \Sym^n(\tH(X,\BQ)).
\end{equation}

We list some easy properties.
\begin{lem}
\label{lem:prop_vectorizable}
Let $\CE$ be an object in the $\CO_X$-orbit. 
\begin{enumerate}[label={\upshape(\roman*)}]
	\item The object $\CE$ is a $\BP^n$-object.
	\item Its Mukai vector satisfies $ \langle v(\CE),v(\CE)\rangle =n+1$.
	\item Its extended Mukai vector satisfies $\tbb(\tilde{v}(\CE),\tilde{v}(\CE))=-2r_X$.
	\item The rank of $\CE$ is of the form $\pm a^n$ for $a\in \BZ$.
	\item The rank and determinant of $\CE$ determine $\overline{v(\CE)}$ completely.
\end{enumerate}
\end{lem}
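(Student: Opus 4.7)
I will exploit the defining data of $\CE$, namely an equivalence $\Phi\in\Aut(\Db(X))$ with $\Phi(\CO_X)\cong\CE$, combined with the linearising identity $\overline{v(\CE)}=\pm T(\tilde v(\CE)^n/n!)$ from Section~\ref{sec:vector_line_bundles} and the isometry property of $\Phi^{\tH}$. Parts (i)--(iii) follow quickly. First, on a hyper-Kähler manifold $\h^*(X,\CO_X)\cong\BC[t]/(t^{n+1})$ with $t$ of degree two (induced by the symplectic form), so $\CO_X$ is a $\BP^n$-object; since derived equivalences preserve graded Ext-algebras and $\omega_X\cong\CO_X$, $\CE$ is also a $\BP^n$-object, giving (i). Then (ii) follows from $\chi(\CE,\CE)=\chi(\CO_X,\CO_X)=\chi(\CO_X)=n+1$ together with the Hirzebruch--Riemann--Roch identification of this number with the Mukai self-pairing. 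For (iii), Definition~\ref{defn:extended_Mukai_line_bundles} applied to the trivial bundle gives $\tilde v(\CO_X)=\alpha+r_X\beta$, whose self-pairing under $\tbb$ is $-2r_X$; the claim then follows from the isometry property of $\Phi^{\tH}$ together with $\tilde v(\CE)=\pm\Phi^{\tH}(\tilde v(\CO_X))$.

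For (iv) and (v), write $\tilde v(\CE)=r\alpha+\lambda+s\beta$ and perform a low-degree expansion of $\overline{v(\CE)}=\pm T(\tilde v(\CE)^n/n!)$. Since $\psi$ is an $\Fg(X)$-module morphism with $\psi(\One)=\alpha^n/n!$, acting by a Lefschetz generator $e_\mu$ as a derivation on $\Sym^n(\tH(X,\BQ))$ yields $\psi(\mu)=\alpha^{n-1}\mu/(n-1)!$. Hence by Remark~\ref{rmk:Injectivity_of_T} the projection $T$ is bijective in cohomological degrees $0$ and $2$, with $T(\alpha^n)=n!\,\One$ and $T(\alpha^{n-1}\mu)=(n-1)!\,\mu$. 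Expanding and isolating the degree-zero and degree-two terms of $\tilde v(\CE)^n$, namely $r^n\alpha^n$ and $n\,r^{n-1}\alpha^{n-1}\lambda$, and noting that the degree-zero and degree-two parts of $v(\CE)$ are $\mathrm{rk}(\CE)$ and $\Rc_1(\CE)$ respectively (since $\tdd_1=0$ on a hyper-Kähler manifold), I obtain $\mathrm{rk}(\CE)=\pm r^n$ and $\Rc_1(\CE)=\pm r^{n-1}\lambda$. Integrality of the rank combined with $r\in\BQ$ forces $r\in\BZ$, proving (iv). For (v), the rank determines $r$ (with a sign that is harmless for even $n$ since $(-\tilde v)^n=\tilde v^n$); the determinant then determines $\lambda$ provided $r\neq 0$; and the identity $-2rs+b(\lambda,\lambda)=-2r_X$ from (iii) pins down $s$. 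Thus $\tilde v(\CE)$ and $\overline{v(\CE)}$ are fully determined.

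The principal obstacle I anticipate is the rank-zero boundary of (v), where $r=0$ collapses both the rank and the identification $\Rc_1(\CE)=\pm r^{n-1}\lambda$, so that the determinant no longer pins down $\lambda$. I expect to handle this case separately by extracting complementary data from the top-degree components of $\overline{v(\CE)}$ (in cohomological degrees $4n-2$ and $4n$, where $T$ is again bijective by Remark~\ref{rmk:Injectivity_of_T}) in order to recover $\lambda$ and $s$ from the top Chern-type classes of $\CE$.
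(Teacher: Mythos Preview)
Your proof of (i)--(iv) and of (v) in the case of nonzero rank is correct and follows the same line as the paper's, which also reads off the $\alpha$- and $\h^2$-coefficients of $\tilde v(\CE)$ from the injectivity of $T$ in degrees $0$ and $2$, and then pins down the $\beta$-coefficient using (iii).

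You are right to flag the rank-zero case of (v) as problematic; the paper's own proof also silently divides by $a^{n-1}$ and thus does not address it. However, your proposed remedy does not establish (v) as stated: recovering $\lambda$ and $s$ from the degree-$(4n-2)$ and degree-$4n$ components of $\overline{v(\CE)}$ amounts to determining $\tilde v(\CE)$ from the \emph{top} Chern data of $\CE$, not from the rank and determinant. In fact (v) fails when $\mathrm{rk}(\CE)=0$ and $n\ge 2$: then $\tilde v(\CE)=\lambda+s\beta$, so $\overline{v(\CE)}$ is supported in cohomological degrees $\ge 2n$ and in particular $\Rc_1(\CE)=0$ as well, yet $\lambda$ is constrained only by $b(\lambda,\lambda)=-2r_X$. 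Concretely, on $S^{[n]}$ two distinct smooth rational curves $C,C'\subset S$ produce (as in Example~\ref{ex:lagr_Pn_vect_obj} and the proof of Proposition~\ref{prop:STonHilbn}) rank-zero objects in the $\CO_X$-orbit with trivial determinant but extended Mukai vectors $l+\tfrac{\delta}{2}+\tfrac{n+1}{2}\beta$ and $l'+\tfrac{\delta}{2}+\tfrac{n+1}{2}\beta$, hence distinct $\overline{v}$. So (v) should be read as a statement for nonzero rank, and your argument is complete in that regime.
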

\begin{proof}
For the notion of $\BP^n$-object and their properties see \cite{HuybrechtsThomasP}. The pairing in (ii) is the generalized Mukai product introduced in \cite{CaldararuMukaiII}. 

The first four points follow easily from the definitions and the fact that for an equivalence $\Phi\in \Aut(\Db(X))$ the induced isomorphism $\Phi^{\tH}$ of $\tH(X,\BQ)$ is an isometry. 
Let $r=a^n$ be the rank of $\CE$ and $\lambda=\Rc_1(\CE)$ be its determinant. Equation~\eqref{eq:Muk_vec_vec_objects} implies that we only have to determine the extended Mukai vector of $\CE$. Using that the orthogonal projection $T$ is injective in degrees $0$ and $2$ we deduce that up to sign
\[
\tilde{v}(\CE)=a\alpha + \frac{\lambda}{a^{n-1}} + c \beta\in \tH(X,\BQ)
\]
for $c\in \BQ$ uniquely determined by the property $\tbb(\tilde{v}(\CE), \tilde{v}(\CE))=-2r_X$.
\end{proof}
The lemma implies that the Chern classes of such objects are severely restricted.
\begin{rmk}
	For K3 surfaces $S$ the definition of the extended Mukai vector agrees with the usual Mukai vector for line bundles if we identify $\tH(S,\BZ)\coloneqq\h^2(S,\BZ) \oplus U$ with $\h^{\ast}(S,\BZ)$ via $\alpha \mapsto \One$ and $\beta \mapsto \pt$. Note that the Mukai vectors of topological line bundles generate $\h^{\ast}(S,\BZ)$. For certain K3 surfaces (e.g.\ very general projective K3 surfaces of low degree \cite[Rem.\ 6.10]{BayerBridgeland}) we know that all spherical objects are in the orbit of the structure sheaf $\CO_S$ under the action of the group of auto-equivalences.
\end{rmk}

\subsection{Square 0}
There is another class of objects for which one can naturally define an extended Mukai vector. This does not involve Proposition~\ref{prop:todd_in_alphabeta}.

Lemma~\ref{lem:relation_q_alphabeta} yields that the element $\beta\in \tH(X,\BQ)$ has the property
\[
\beta^n=\psi(c_X \pt)\in \SH(X,\BQ).
\]
Since we have for a point $x \in X$
\[
v(k(x))=\ch(k(x))=\pt \in \h^{4n}(X,\BQ)
\]
we obtain the relation
\[
\psi(v(k(x)))=\frac{\beta^n}{c_X} \in \Sym^n(\tH(X,\BQ))
\]
respectively
\begin{equation}
\label{eq:defining_prop_kx_extended_vector}
v(k(x))=T \left( \frac{\beta^n}{c_X} \right) \in \SH(X,\BQ).
\end{equation}
\begin{defn}
	For a point $x\in X$ and the associated skyscraper sheaf $k(x)$ we define its \textit{extended Mukai vector} as
	\[
	\tilde{v}(k(x))\coloneqq \beta.
	\]
\end{defn}
As in the case of objects in the $\CO_X$-orbit this definition can be extended using derived equivalences.
\begin{defn}
	Let $X$ be a projective hyper-Kähler manifold of dimension $2n$ with $n$ odd, $\CE\in \Db(X)$ an object and $\Phi\in \Aut(\Db(X))$ such that 
$\Phi(k(x))\cong \CE$ for some $x\in X$. We define the \textit{extended Mukai vector} of $\CE$ as
	\[
	\tilde{v}(\CE) \coloneqq \Phi^{\tH}(\beta) \in \tH(X,\BQ).
	\]
\end{defn}
We will say that such objects are in the \textit{$k(x)$-orbit}. The analogous relation to \eqref{eq:defining_prop_kx_extended_vector} for objects in the $k(x)$-orbit reads
\begin{equation}
\label{eq:defining_prop_general_kx_extended_vector}
v(\CE)=T\left(  \frac{\tilde{v}(\CE)^n}{c_X} \right) \in \SH(X,\BQ).
\end{equation}
Again in the case $n$ even and $b_2(X)$ odd one has to be more careful. Let $\CE\in \Db(X)$ be such that there exists $\Phi\in \Aut(\Db(X))$ with $\Phi(k(x))\cong \CE$ for some $x\in X$. Let us again write
\[
v=\Phi^{\tH}(\beta)=r\alpha+\lambda+s\beta.
\]
We define again the \textit{signum $\sgn(v)$} of the vector $v$. As before for $r\neq 0$ we set $\sgn(v) \coloneqq \sgn(r)$. In the case $r=0$ and $\lambda\neq 0$ the Hodge Index Theorem asserts that $c=b(\lambda,\omega)\neq 0$ for all Kähler classes $\omega$. We assign $\sgn(v) \coloneqq \sgn(c)$. Finally for $r=\lambda=0$ we define $\sgn(v) \coloneqq \sgn(s)$.
\begin{defn}
	The \textit{extended Mukai vector} of $\CE$ is defined as
	\[
	\tilde{v}(\CE)\coloneqq \epsilon(\Phi^{\tH})\sgn(v)v.
	\]
\end{defn}
We also say that such objects are in the \textit{$k(x)$-orbit}. As above we have
\begin{equation}
	v(\CE)=\epsilon(\Phi^{\tH})T\left( \frac{\tilde{v}(\CE)^n}{c_X} \right)\in \SH(X,\BQ)
\end{equation}
and the formation of the extended Mukai vector is as in \eqref{eq:functoriality_O_vectorizable_sign} and \eqref{eq:functoriality_O_vectorizable_sign_npower} functorial for derived equivalences.
\begin{lem}
	\label{lem:properties_kx_vectorizable}
	Let $\CE$ be an object in the $k(x)$-orbit. 
	\begin{enumerate}[label={\upshape(\roman*)}]
		\item Its Mukai vector $v(\CE)$ lies in $\SH(X,\BQ)$ and satisfies $b_{\SH}( v(\CE),v(\CE)) =0$.
		\item Its extended Mukai vector satisfies $\tbb(\tilde{v}(\CE),\tilde{v}(\CE))=0$.
		\item The rank of $\CE$ is of the form $\pm \frac{a^nn!}{c_X}$ for $a\in \BQ$.
		\item The rank and determinant of $\CE$ determine $v(\CE)$ completely.
		\item If the rank of $\CE$ is zero, then all Chern classes $\Rc_i(\CE)$ 
are isotropic, that is $\sigma|_{\Rc_i(\CE)}=\sigma\Rc_i(\CE)=0 \in \h^{2i+2}(X,\BC)$ with $\sigma\in \h^0(X,\Omega^2_X)$ a symplectic form. 
	\end{enumerate}
\end{lem}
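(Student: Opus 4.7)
The approach parallels the proof of Lemma~\ref{lem:prop_vectorizable}, exploiting that by definition $\tilde{v}(\CE)=\pm \Phi^{\tH}(\beta)$ for some auto-equivalence $\Phi\in\Aut(\Db(X))$ with $\Phi(k(x))\cong \CE$, and that $\Phi^{\tH}$ is a Hodge isometry of $\tH(X,\BQ)$ (Section~\ref{subsec:prelim_der_equi_tael}). Statement (ii) is then immediate from $\tbb(\beta,\beta)=0$. For (i), recall that $v(k(x))=\pt=T(\beta^n/c_X)$ already lies in $\SH(X,\BQ)$ by \eqref{eq:defining_prop_kx_extended_vector}; since $\Phi^{\h}$ sends $v(k(x))$ to $v(\CE)$ and restricts to the Hodge isometry $\Phi^{\SH}$ on the Verbitsky component, we obtain $v(\CE)\in \SH(X,\BQ)$ and $b_{\SH}(v(\CE),v(\CE))=b_{\SH}(\pt,\pt)=0$.

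For (iii) and (iv), write $\tilde{v}(\CE)=a\alpha + \lambda + s\beta$ and expand $\tilde{v}(\CE)^n \in \Sym^n(\tH(X,\BQ))$ multinomially: the coefficient of $\alpha^n$ is $a^n$, and the coefficient of $\alpha^{n-1}\lambda$ is $na^{n-1}$. Combined with the identity $v(\CE)=\epsilon(\Phi^{\tH})T(\tilde{v}(\CE)^n/c_X)$, the equality $T(\alpha^n)=n!\One$, and the identity $T(\alpha^{n-1}\omega)=(n-1)!\omega$ for $\omega\in \h^2(X,\BQ)$ (which follows from $\Fg(X)$-equivariance of $\psi$ applied to $\omega=e_\omega(\One)$, compare the proof of Lemma~\ref{lem:relation_q_alphabeta}), one reads off
\[
r(\CE)=\pm \frac{a^n n!}{c_X}, \qquad \Rc_1(\CE)=\pm \frac{n! a^{n-1}}{c_X}\lambda.
\]
The first equation gives (iii). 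For (iv), when $r(\CE)\neq 0$ we have $a\neq 0$, so $a$ is determined by $r(\CE)$ up to the sign conventions built into the definition of $\tilde{v}(\CE)$, and then $\lambda$ is recovered from $\Rc_1(\CE)$; the remaining scalar $s$ is forced by the isotropy relation in (ii), namely $2as=b(\lambda,\lambda)$. Injectivity of $T$ in cohomological degrees $0$ and $2$, Remark~\ref{rmk:Injectivity_of_T}, guarantees this reconstruction is unambiguous.

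For (v), suppose $r(\CE)=0$, so $a=0$ and $\tilde{v}(\CE)=\lambda + s\beta$. Let $\sigma\in \h^{2,0}(X)$ be a symplectic form. Since $\lambda$ is of Hodge type $(1,1)$, $b(\sigma,\lambda)=0$, and together with $e_\sigma(\beta)=0$ the derivation property of $e_\sigma$ on $\Sym^n(\tH(X,\BC))$ gives $e_\sigma(\tilde{v}(\CE))=0$, hence $e_\sigma(\tilde{v}(\CE)^n)=0$. Because both $\psi(\SH(X,\BC))$ and its orthogonal complement inside $\Sym^n(\tH(X,\BC))$ are $\Fg(X)$-submodules, the projection $T$ intertwines the LLV-action, so $\sigma\cdot v(\CE)=0$. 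Since the degree-zero component of $\sqrt{\td(X)}$ equals $\One$ and is thus invertible, $v(\CE)=\ch(\CE)\sqrt{\td(X)}$ yields $\sigma\cdot \ch_k(\CE)=0$ for every $k$. A straightforward induction via Newton's identities, using commutativity of the cup product to pull $\sigma$ onto a single factor, then produces $\sigma\cdot \Rc_i(\CE)=0$ for every $i$. I expect the main technical subtlety to lie precisely in this $\Fg(X)$-equivariance of $T$, which underpins the isotropy conclusion in (v).
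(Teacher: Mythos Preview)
Your treatment of (i)--(iv) is essentially identical to the paper's (the paper dismisses these as following ``easily from the definition'' and ``analogous to Lemma~\ref{lem:prop_vectorizable}''). Like the paper, you only argue (iv) when $r(\CE)\neq 0$; this matches the paper's own level of precision.

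For (v), your argument is correct but genuinely different from the paper's. The paper proceeds degree by degree: it first identifies $\Rc_n(\CE)$ with a multiple of $\lambda^n$ and kills $\sigma\lambda^n$ via the Bogomolov--Verbitsky relation $(\lambda+\sigma)^{n+1}=0$ from~\eqref{eq:Bogomolov_Verbitsky_description_SH}; then for each $k>n$ it computes $v(\CE)_{2k}$ explicitly as a multiple of $\lambda^{2n-k}\mathsf{q}_{2k-2n}$ and invokes the polarised Fujiki relations of Proposition~\ref{prop:huybrechts_class_inv_small_defo} to show $\sigma\lambda^{2n-k}\mathsf{q}_{2k-2n}\mu^{2n-k-1}=0$ for all $\mu$. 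Your route bypasses all of this: observing $e_\sigma(\tilde{v}(\CE))=0$ and using $\Fg(X)$-equivariance of $T$ yields $\sigma\cdot v(\CE)=0$ in one stroke, after which invertibility of $\sqrt{\td}$ and Newton's identities finish. This is shorter and more conceptual; in fact it is precisely the argument the paper later sketches in Section~\ref{subsec:Examples_Vectors} for the more general objects of Definition~\ref{def:general_extended}. What the paper's explicit approach buys is a concrete formula for each $v(\CE)_{2k}$ in terms of $\lambda$ and $\mathsf{q}_{2k-2n}$, which may be of independent use, but for the isotropy conclusion alone your method is cleaner.
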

\begin{proof}
	The first three points follow easily from the definition and the fourth point is analogous to Lemma~\ref{lem:prop_vectorizable}.
	
	Suppose that the rank of $\CE$ is zero and write
	\[
	\tilde{v}(\CE)=\lambda + s\beta \in \tH(X,\BQ)
	\]
	with $\lambda \in \h^{1,1}(X,\BQ)$ and $s\in \BQ$. We will assume that $\lambda \neq 0$, the other case being trivial. Since
	\[
	\tbb(\tilde{v}(\CE),\tilde{v}(\CE))=0
	\]
	we infer that $b(\lambda,\lambda)=0$. Equation~\eqref{eq:defining_prop_general_kx_extended_vector} gives
	\begin{equation}
	\label{eq:in_proof_lem_properties_kx_vector}
	v(\CE)=T\left( \frac{(\lambda + s \beta)^n}{c_X} \right) \in \SH(X,\BQ)
	\end{equation}
	which is supported in cohomological degrees ranging from $2n$ to $4n$. Up to a constant, the degree $2n$ component of $v(\CE)$ equals the $n$-th Chern character $\ch_n(\CE)$ which again up to a constant equals the Chern class $\Rc_n(\CE)$. From \eqref{eq:in_proof_lem_properties_kx_vector} we obtain
	\[
	\Rc_n(\CE)=dT(\lambda^n) \in \SH^{2n}(X,\BQ)
	\]
	for some $d\in \BQ$. Since $\psi$ is a morphism of $\Fg(X)$-modules we get
	\[
	\psi(\lambda^n)=\psi(e_{\lambda}^n(\One))=e_{\lambda}^n \left( \frac{\alpha^n}{n!} \right) =\lambda^n \in \Sym^n(\tH(X,\BQ))
	\]
	where the last equality used that $b(\lambda,\lambda)=0$. Therefore $T(\lambda^n)=\lambda^n\in \SH(X,\BQ)$ and so in particular
	\[
	\Rc_n(\CE)=d \lambda^n.
	\]
	The assertion of the lemma is therefore that $\sigma\Rc_n(\CE)=d\sigma 
\lambda^n=0$. We have
	\begin{equation}
	\label{eq:in_proof_lambda_sigma_0}
	b(\lambda,\lambda)=b(\sigma,\sigma)=b(\lambda,\sigma)=0
	\end{equation}
	which shows that
	\begin{equation}
	\label{eq:zasd}
	(\lambda + \sigma)^{n+1} \in \SH^{2n+2}(X,\BC)
	\end{equation}
	must vanish by using \eqref{eq:Bogomolov_Verbitsky_description_SH}. As each summand in \eqref{eq:zasd} lies in a different piece of the Hodge decomposition, we deduce that $\lambda^n\sigma =0$. 
	
	For $k > n$, induction on $k$ shows that $\sigma \Rc_k(\CE) = 0$ if and only if $\sigma v(\CE)_{2k} = 0$
	where $v(\CE)_{2k}$ denotes the cohomological degree $2k$ part of the Mukai vector.
Equation \eqref{eq:in_proof_lem_properties_kx_vector} gives that the degree $2k$ part of $v(\CE)$ equals
	\[
	v(\CE)_{2k}=\frac{s^{k-n}}{c_X}{n\choose k-n}T( \lambda^{2n-k}\beta^{k-n})  \in \SH^{2k}(X,\BQ).
	\]
	To determine the image of $\lambda^{2n-k}\beta^{k-n}$ under the orthogonal projection note that $T$ is as well a morphism of $\Fg(X)$-modules. Similarly to above, one shows
	\[
	\lambda^{2n-k}\beta^{k-n}=e_{\lambda}^{2n-k}\left( \frac{\alpha^{2n-k}\beta^{k-n}}{(2n-k)!} \right) \in \Sym^n(\tH(X,\BQ))
	\]
	using again that $b(\lambda,\lambda)=0$. Lemma~\ref{lem:relation_q_alphabeta} implies that we have
	\[
	T(\alpha^{2n-k}\beta^{k-n})= (2n-k)!\mathsf{q}_{2k-2n}
	\]
	which yields
	\[
	v(\CE)_{2k}=\frac{s^{k-n}}{c_X}{n \choose k-n}\lambda^{2n-k} \mathsf{q}_{2k-2n} \in \SH(X,\BQ).
	\]
	Ignoring constants we have to show that $\lambda^{2n-k} \mathsf{q}_{2k-2n}\sigma\in \SH(X,\BC)$ vanishes which is equivalent to 
	\[
	\lambda^{2n-k} \mathsf{q}_{2k-2n} \sigma \mu^{2n-k-1}=0\in \SH^{4n}(X,\BC)
	\]
	for all $\mu\in \h^2(X,\BQ)$. This follows similar to above using again \eqref{eq:in_proof_lambda_sigma_0} and the polarized version of the Fujiki relations, see Proposition~\ref{prop:huybrechts_class_inv_small_defo}.
\end{proof}
Note that the number from Lemma~\ref{lem:properties_kx_vectorizable} (iii)
\[
\frac{a^nn!}{c_X}
\]
must in particular be an integer. For all known examples of hyper-Kähler manifolds $c_X\in \BZ$ and therefore we must already have $a\in \BZ$ 
using Legendre's or de Polignac's formula.
\begin{rmk}
	\label{rem:explan_n_factorial}
	We want to comment on the denominators appearing in \eqref{eq:Muk_vec_vec_objects} and \eqref{eq:defining_prop_general_kx_extended_vector} and their relation.
	
	The Chern character in degree $4n$ is of the form 
	\[
	\ch_{2n}(\CE)=\frac{1}{(2n)!}(\Rc_1(\CE)^{2n}+\dots).
	\]
	For a line bundle $\CL \in \Pic(X)$ we have by using the Fujiki relation
	\[
	\frac{\Rc_1(\CL)^{2n}}{(2n)!}=c_X\frac{(2n)!b(\lambda,\lambda)^n}{(2n)!n!2^n}=\frac{c_X}{n!}\frac{b(\lambda,\lambda)^n}{2^n}\in \SH^{4n}(X,\BQ).
	\]
	For the known examples of hyper-Kähler manifolds the BBF form is even and the Fujiki constant is integral. In these cases we have
	\begin{equation}
	\label{eq:relation_denominators_vectorizable}
	\frac{\Rc_1(\CL)^{2n}}{(2n)!}\in \frac{c_X}{n!}\BZ
	\end{equation}
	by identifying the class $\pt$ with $1\in \BZ$. The factor $\frac{1}{c_X}$ for objects in the $k(x)$-orbit paired with $\frac{c_X}{n!}$ from \eqref{eq:relation_denominators_vectorizable} gives the factor $\frac{1}{n!}$ 
for objects in the $\CO_X$-orbit. Note that all our computations take place in the Verbitsky component $\SH(X,\BQ)$ which is over $\BQ$ generated by Chern characters of line bundles. 
\end{rmk}
\subsection{Structural result}
The discussion of the two previous subsections enables us to prove the following general structural result for derived equivalences between hyper-Kähler manifolds. 
\begin{thm}
\label{thm:general_structure}
Let $X$ and $Y$ be deformation-equivalent projective hyper-Kähler manifolds and $\Phi \colon \Db(X) \cong \Db(Y)$ an equivalence with Fourier--Mukai kernel $\CE$. The rank $r$ of $\CE$ is of the form $\frac{a^nn!}{c_X}$ for $a\in \BQ$. If $r=0$, then $\CE$ induces a covering of $X$ and $Y$ with Lagrangian cycles, or there exists a Hodge isometry $\h^2(X,\BZ) \cong \h^2(Y,\BZ)$.
\end{thm}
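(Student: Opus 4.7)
The plan is to apply the extended Mukai vector machinery from Section~\ref{sec:vector_line_bundles} to the fibers $\CE_x \coloneqq \Phi(k(x)) \in \Db(Y)$ of the Fourier--Mukai kernel $\CE$.

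First I would package the relevant vector. Since $k(x)$ tautologically sits in the $k(x)$-orbit on $X$ and the formation of the extended Mukai vector is functorial for derived equivalences, the object $\CE_x$ acquires an extended Mukai vector
\[
\tilde{v}(\CE_x) \,=\, \pm\,\Phi^{\tH}(\beta) \,=\, r\alpha + \lambda + s\beta \;\in\; \tH(Y,\BQ),
\]
with $\lambda \in \h^{1,1}(Y,\BQ)$ and $r,s \in \BQ$ satisfying $b(\lambda,\lambda) = 2rs$; this relation is forced by the isometry property of $\Phi^{\tH}$ together with $\tbb(\beta,\beta) = 0$. The first assertion then follows immediately: the generic rank of $\CE$ on $X\times Y$ coincides with the generic rank of $\CE_x$ on $Y$ for generic $x$, and by Lemma~\ref{lem:properties_kx_vectorizable}(iii) the latter is of the form $\pm a^n n!/c_X$ with $a = r \in \BQ$.

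For the dichotomy, suppose the rank of $\CE$ vanishes, i.e.\ $r=0$, so $\tilde{v}(\CE_x) = \lambda + s\beta$ with $b(\lambda,\lambda) = 0$. Two sub-cases arise. If $\lambda\neq 0$, Lemma~\ref{lem:properties_kx_vectorizable}(v) tells us that every Chern class of $\CE_x$ is annihilated by the holomorphic symplectic form; in particular $\Rc_n(\CE_x)$, which up to a positive scalar is the fundamental class of the codimension-$n$ component of $\textup{supp}(\CE_x)$, is Lagrangian in $Y$. Varying $x$ assembles these fibers into the codimension-$n$ component of $\textup{supp}(\CE)\subset X\times Y$; both projections are surjective since $\CE_x\neq 0$ for every $x\in X$ and the symmetric argument for $\Phi^{-1}$ covers the $X$-side, yielding the desired covering by Lagrangian cycles. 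If instead $\lambda = 0$, then $\Phi^{\tH}(\beta) = \pm s\beta$, so $\Phi^{\tH}$ preserves the isotropic line $\BQ\beta$ and therefore also its perpendicular $\beta^\perp = \BQ\beta \oplus \h^2$. The induced map on $\beta^\perp/\BQ\beta \cong \h^2(\cdot, \BQ)$ respects both the BBF form and the weight-two Hodge structure, giving a rational Hodge isometry $\h^2(X,\BQ)\cong \h^2(Y,\BQ)$. An integral lift is then produced via the deformation-equivalence assumption: the lattices $\h^2(X,\BZ)$ and $\h^2(Y,\BZ)$ are abstractly isometric, and combining this abstract isomorphism with the rational Hodge isomorphism via standard lattice-theoretic gluing arguments yields an integral Hodge isometry.

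The main obstacle lies in this last step. A rational Hodge isometry does not automatically restrict to an integral one, and the deformation-equivalence hypothesis must be exploited essentially—presumably by constraining the scalar $s$ through integrality requirements on the extended Mukai vectors of additional distinguished objects (such as line bundles), so as to secure the existence of an integral lift. The first sub-case (the Lagrangian geometry) is the more geometric pay-off and follows cleanly from the vanishing of the $\alpha$-coefficient combined with Lemma~\ref{lem:properties_kx_vectorizable}(v); by contrast, the second sub-case is where the clean rational statement meets the more delicate integral question.
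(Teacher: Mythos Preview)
Your handling of the rank statement and of the case $r=0$, $\lambda\neq 0$ is essentially the paper's argument. One small slip: surjectivity of the projections in the Lagrangian case does not follow from $\CE_x\neq 0$ alone—you need the codimension-$n$ piece of the cycle to be nonzero fibrewise, and this is guaranteed by the fact that the degree-$2n$ part of $v(\CE_x)$ is (a nonzero multiple of) $\lambda^n\neq 0$ in $\SH^{2n}(Y,\BQ)$, which the paper makes explicit.

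The genuine gap, which you flag yourself, is the case $\lambda=0$. Your proposed bridge—abstract lattice isometry plus rational Hodge isometry yields an integral Hodge isometry by ``standard lattice-theoretic gluing''—is simply false; isogenous non-isomorphic complex tori already give counterexamples in dimension one. The paper's mechanism is entirely different and does not use the deformation-equivalence hypothesis at all in this step. It brings in topological $K$-theory: the equivalence induces an isometry $v(K^0_{\mathrm{top}}(X))\cong v(K^0_{\mathrm{top}}(Y))$ of lattices inside rational cohomology. From $\Phi^{\tH}(\beta)=s\beta$ one gets $\Phi^{\h}(\pt)=\pm s^n\pt$, and primitivity of $\pt$ in $v(K^0_{\mathrm{top}})$ forces $s=\pm 1$. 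With $s=1$ one has $\Phi^{\tH}(\alpha)=\alpha+\lambda+t\beta$; applying the degree-two projection of $v(K^0_{\mathrm{top}}(Y))$ to $\Phi^{\h}(v(\CO_X))$ shows $\lambda\in\h^2(Y,\BZ)$, so post-composing with the twist by the line bundle of class $-\lambda$ arranges $\Phi^{\tH}(\alpha)=\alpha$ and $\Phi^{\tH}(\beta)=\beta$. Integrality of the resulting isometry of $\h^2$ is then checked one class at a time: for $\mu\in\h^2(X,\BZ)$ the Mukai vector of the \emph{topological} line bundle with first Chern class $\mu$ lies in $v(K^0_{\mathrm{top}}(X))$, its image under $\Phi^{\h}$ lies in $v(K^0_{\mathrm{top}}(Y))$, and projecting to degree two forces $\Phi^{\tH}(\mu)\in\h^2(Y,\BZ)$. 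So your instinct to probe with line bundles is exactly right, but the engine is the compatibility of $\Phi^{\h}$ with the integral lattice $v(K^0_{\mathrm{top}})$, not an abstract gluing argument.
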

\begin{proof}
	Let us first assume that either $n$ is odd or that $n$ is even and $b_2(X)$ odd. 
	
	We distinguish three cases depending on the image vector
	\[
	v=a\alpha + \lambda + s\beta \in \tH(X,\BQ)
	\]
	of $\beta$ under $\Phi^{\tH}$. For $a \neq 0$ the assertion on the rank follows from Lemma~\ref{lem:properties_kx_vectorizable}. 
	
	In the case that $a=0$, but $\lambda \neq 0$, we consider the the $n$-th Chern class $\Rc_n(\CE)\in \A^n(X\times Y)$ in the Chow ring with rational coefficients. The compatibility of derived equivalences with the induced maps between Chow and cohomology groups shows that for all $x\in X$ and all $y \in Y$ the cycles $\Rc_n(\CE)|_{x\times Y}\in \A^n(Y)$ respectively $\Rc_n(\CE)|_{X\times y}\in \A^n(X)$ are non-zero. Indeed the cohomological degree $2n$ component of $v(\CE_x)$ is equal to $\Rc_n(\CE_x)$ considered in cohomology which by assumption equals $\lambda^n\in \SH(Y,\BQ)$ and similarly for $X$. This shows that viewing the cycle $\Rc_n(\CE)$ as a family of cycles on $Y$ parametrized by points $x\in X$ these cycles cover $Y$ in the sense that for each $y\in Y$ the family of cycles $\Rc_n(\CE)$ restricts non-trivially to the subvariety $X\times y$. Since being isotropic is a cohomological property, the assertion follows now from 
Lemma~\ref{lem:properties_kx_vectorizable} (v). 
	
	Lastly, we assume that $v=s\beta$ for $s\in \BQ$. This assumption implies that the element $\pt\in \h^{4n}(X,\BQ)$ gets sent to $\pm s^n \pt \in \h^{4n}(Y,\BQ)$ under the induced cohomological Fourier--Mukai transform $\Phi^{\h}$. We can view the image of topological $K$-theory under the Mukai vector map $v(K_{\textup{top}}(X))$ as a lattice inside the full cohomology $\h^{\ast}(X,\BQ)$ equipped with the generalized Mukai pairing. We refer to \cite{AddingtonThomas} for a recollection of topological $K$-theory and its relationship to derived categories and Fourier--Mukai transforms. The isomorphism $\Phi^{\h}$ then induces an isometry between the lattices 
$v(K_{\textup{top}}(X))$ and $v(K_{\textup{top}}(Y))$. Since $\pt \in v(K_{\textup{top}}(X))$ is a primitive element, the same must be true for $\Phi^{\h}(\pt)=\pm s^n\pt$. Therefore $s\in \{\pm 1\}$. 
	
	We may assume without loss of generality that $s=1$. Since $\Phi^{\tH}$ is an isometry, we infer that
	\[
	\Phi^{\tH}(\alpha)= \alpha + \lambda + t\beta \in \tH(Y,\BQ).
	\]
	We claim that already $\lambda \in \h^2(Y,\BZ)$. To see this, consider $\tilde{v}(\CO_X)=\alpha + r_X \beta$ and its image
	\[
	\Phi^{\tH}(\tilde{v}(\CO_X))=\alpha + \lambda + (t+r_X)\beta \in \tH(Y,\BQ). 
	\]
	As above the element $v(\CO_X)$ belongs to $v(K_{\textup{top}}(X))$ and therefore $\Phi^{\h}(v(\CO_X))$ must be contained in $v(K_{\textup{top}}(Y))$. Using that the projection of $v(K_{\textup{top}}(Y))$ to its degree 
two component lands inside $\h^2(Y,\BZ)$ and the compatibility \eqref{diag:commut_T_epsilon} we infer that also $\lambda$ belongs to $\h^2(Y,\BZ)$. 
	
	Employing that $\Phi^{\tH}$ is a Hodge isometry we furthermore conclude that $\lambda \in \h^{1,1}(Y,\BZ)$. Hence, there exists a line bundle $\CL\in \Pic(Y)$ with first Chern class $-\lambda$. Changing $\Phi$ by postcomposing it with $\SM_\CL$ we obtain a derived equivalence $\Db(X)\cong \Db(Y)$ still denoted by $\Phi$ which satisfies $\Phi^{\tH}(\alpha)=\alpha$ and $\Phi^{\tH}(\beta)=\beta$. Thus, $\Phi^{\tH}$ restricts to a Hodge isometry
	\[
	\h^2(X,\BQ) \cong \h^2(Y,\BQ).
	\]
	
	It remains to show that this isometry sends $\h^2(X,\BZ)$ to $\h^2(Y,\BZ)$. We employ the same strategy as above. For $\lambda \in \h^2(X,\BZ)$ the vector
	\[
	\tilde{v}(\CL)=\alpha + \lambda + \left( r_X + \frac{b(\lambda,\lambda)}{2} \right) \beta
	\]
	can be viewed as the extended Mukai vector of the (topological) line bundle $\CL$ with first Chern class $\lambda$. The Mukai vector $v(\CL)$ lies inside $v(K_{\textup{top}}(X))$ and $\tilde{v}(\CL)$ is mapped under $\Phi^{\tH}$ to
	\[
	\alpha + \Phi^{\tH}(\lambda) + \left( r_X + \frac{b(\lambda,\lambda)}{2} 
\right)\beta.
	\]
	As before, the compatibility with topological $K$-theory forces $\Phi^{\tH}(\lambda)$ to lie inside $\h^2(Y,\BZ)$. This finishes the proof in the case $n$ odd or $n$ even and $b_2(X)$ odd.
	
	If we now assume that $n$ as well as $b_2(X)$ are odd, then we cannot apply the results from \cite{TaelmanDerHKLL} as explained in Section~\ref{subsec:prelim_der_equi_tael} directly. That is, given a derived equivalence $\Phi \colon \Db(X) \cong \Db(Y)$ the induced cohomological Fourier--Mukai transform $\Phi^{\h} \colon \h^\ast(X,\BQ) \cong \h^\ast(Y,\BQ)$ still restricts to a Hodge isometry
	\[
	\Phi^{\SH} \colon \SH(X,\BQ) \cong \SH(Y,\BQ),
	\]
	but there may not exist a Hodge isometry $\varphi \in \RO(\tH(X,\BQ))$ such that \eqref{diag:main_diag_sec2} commutes. However, inspecting \cite[Prop.\ 4.1]{TaelmanDerHKLL} and its proof we see that there exists a Hodge isometry $\varphi \in \RO(\tH(X,\BQ))$ unique up to sign such that via \eqref{diag:main_diag_sec2} either $\Phi^{\SH}$ or $-\Phi^{\SH}$ agrees with $\varphi^n$. Reinspecting the above proof we see that this sign discrepancy does not affect the arguments and the proof remains valid also in the case $n$ even and $b_2(X)$ even. 
\end{proof}
\subsection{Concluding remarks and further examples}
\label{subsec:Examples_Vectors}
For general sheaves and objects $\CE \in \Db(X)$ we make the following definition.
\begin{defn}
\label{def:general_extended}
	An object $\CE \in \Db(X)$ admits an \textit{extended Mukai vector} $\tilde{v}(\CE)\in \tH(X,\BQ)$ if there exists $c\in \BQ$ such that
	\[
	\overline{v(\CE)}=cT(\tilde{v}(\CE)^n) \in \SH(X,\BQ).
	\]
\end{defn}
With this definition, the vector $\tilde{v}(\CE)$ is not uniquely defined. One rather considers a one-dimensional subspace $V\subset \tH(X,\BQ)$ and demands that $\overline{v(\CE)}$ lies in the one-dimensional subspace 
$T(V^n)$. In the above two series of examples we considered a certain natural choice of $c\in \BQ$ which then enabled us to define the extended Mukai vector as a uniquely determined vector in $\tH(X,\BQ)$. 

Here are two observations how one can generate new examples of objects admitting an extended Mukai vector from known ones:
\begin{itemize}
	\item If $\Phi\colon \Db(X)\cong \Db(Y)$ is an equivalence, then $\CE \in \Db(X)$ admits an extended Mukai vector if and only if $\Phi(\CE)\in \Db(Y)$ admits an extended Mukai vector. 
	\item Let $\pi \colon \CX \to B$ be a smooth and projective morphism with hyper-Kähler manifolds as fibres and $\CE$ on $\CX$ a $B$-flat sheaf or a $B$-perfect complex. For two points $b,b'\in B$ we have that $\CE|_{\CX_{b}}$ admits an extended Mukai vector if and only if $\CE|_{\CX_{b'}}$ admits an extended Mukai vector.
\end{itemize}
One can prove similar results as in Lemmas~\ref{lem:prop_vectorizable} and \ref{lem:properties_kx_vectorizable} for objects $\CE\in \Db(X)$ satisfying Definition~\ref{def:general_extended} for a fixed $c\in \BQ$. We just mention that if $\CE$ has zero rank, then the projections of all Chern classes of $\CE$ to $\SH(X,\BQ)$ are isotropic. To see this one writes $\tilde{v}(\CE)=\lambda+s\beta$ for $\lambda \in \h^2(X,\BQ)$ and $s\in \BQ$ and uses that $e_{\sigma}(\tilde{v}(\CE)) = e_{\bar{\sigma}}(\tilde{v}(\CE))=0$ for $\sigma$ and $\bar{\sigma}$ the (anti-)holomorphic two-form. 

An important class of cohomologically linearisable objects are line bundles and skyscraper sheaves. We want to give further examples. 

\begin{example}
\label{ex:lagr_Pn_vect_obj}
Let $S$ be a projective K3 surface and $\BP^1\cong C\subset S$ a smooth rational curve with class $l=[C]\in \h^2(S,\BZ)$. This yields a Lagrangian projective space $\BP^n\cong C^{[n]}\subset S^{[n]}$ inside the Hilbert scheme of $n$ points. The proof of Proposition~\ref{prop:STonHilbn} will imply that the structure sheaf $\CO_{\BP^n}$ is in the $\CO_{S^{[n]}}$-orbit. If we write $\h^2(S^{[n]},\BZ)=\h^2(S,\BZ) \oplus \BZ \delta$ where $2\delta$ is the class of the exceptional divisor, one has
\[
\tilde{v}(\CO_{\BP^n})=l+\frac{\delta}{2} + \frac{n+1}{2}\beta \in \tH(S^{[n]},\BQ).
\]
In particular, the projection $\overline{[\BP^n]}$ of the class $[\BP^n]$ 
to $\SH^{2n}(S^{[n]},\BQ)$ equals 
\[
\overline{[\BP^n]}=T\left( \frac{(l + \frac{\delta}{2})^n}{n!} \right) \in \SH^{2n}(S^{[n]},\BQ).
\]
This yields a partial answer to a question posed by Bakker \cite[Q.\ 29]{BakkerLagrangian}. For more on this example, we refer to Proposition~\ref{prop:STonHilbn} and Remark~\ref{rem:Pn_in_Hilb}. 
\end{example}
\begin{example}
	For a very general projective K3 surface $S$ of degree $2g-2$ we will study in Section~\ref{subsec:Example_POINCARE_ADM} the case of the moduli space of stable sheaves $M=M^S_H(0,1,d+1-g)$ which admits naturally a Lagrangian fibration $\pi\colon M\to \BP^g=|H|$. The general fibre $A$ is 
a smooth abelian variety and a degree zero line bundle $\CL$ supported on 
$A$ is an example of an object in the $k(x)$-orbit with
	\[
	\tilde{v}(\CL)=f\in \tH(M,\BQ)
	\]
	where $f\in \h^2(M,\BZ)$ is the image of the ample generator of $\Pic(\BP^g)$ under pullback via $\pi$. For $d=0$ the section $\BP^g\subset M$ again yields an object $\CO_{\BP^g}\in \Db(M)$ in the $\CO_M$-orbit.
\end{example}
\begin{example}
\label{ex:4.15}
	To the universal ideal sheaf $\CI$ on $S\times S^{[2]}$ one associates the Fourier--Mukai kernel \cite{AddingtonDerSymHK,MarkmanMehrotraIntTransf}
	\[
	\mathcal{E}^1 \coloneqq \sExtA^1_{\pi_{13}}(\pi_{12}^*(\mathcal{I}), \pi_{23}^*(\mathcal{I})) \in \Coh(S^{[2]} \times S^{[2]})
	\]
	where $\pi_{ij}$ denote the projections from $S^{[2]}\times S \times S^{[2]}$. Consider a point $p\in S^{[2]}$ parametrizing two distinct points $x,y\in S$ and denote by $Z_x$ respectively $Z_y$ the subvarieties of $S^{[2]}$ parametrizing subschemes whose support contains $x$ respectively $y$. The derived equivalence $\FM_{\CE^1}$ sends $k(p)$ to the sheaf $\CE^1_{p\times S^{[2]}}$ which sits in a short exact sequence
	\[
	0 \to \CO(-\delta) \to \CE^1_{p\times S^{[2]}} \to I_{Z_x\cup Z_y}\to 0
	\]
	and is an example of an object in the $k(x)$-orbit with extended Mukai vector
	\[
	\tilde{v}(\CE^1_{p\times S^{[2]}})=\alpha -\frac{\delta}{2}-\frac{1}{4}\beta.
	\]
	For more on this example, see Section~\ref{subsec:examples_dim4}. 
\end{example}
\begin{rmk}
	\label{rmk:NO_vector_on_all_Db}
	Ideally, one would like to define a vector $\tilde{w}$ for all elements $\CE\in \Db(X)$ in a coherent way. By this we mean for example that its formation should factor through the $K$-group $K(X)$ and is compatible with derived equivalences, i.e.\ the following diagram
	\begin{equation*}
	\begin{tikzcd}
	\Db(X)\ar{r}{\Phi}\ar{d}{\tilde{w}} &\Db(Y) \ar{d}{\tilde{w}} \\
	\tH(X,\BQ) \ar{r}{\Phi^{\tH}} & \tH(Y,\BQ)
	\end{tikzcd}
	\end{equation*}
	should commute. However, this is too much to ask for. 
	
	Indeed, consider for example the case $X=S^{[2]}$ of the Hilbert scheme of two points on a projective K3 surface $S$. 
	Using the Koszul resolution, one can check that the structure sheaf of a 
complete intersection of divisors of codimension larger than two must have trivial image under $\tilde{w}$. In particular, all sheaves supported on a zero-dimensional subscheme must have trivial image under $\tilde{w}$. 
Therefore all previously defined objects in the $k(x)$-orbit must map to zero under $\tilde{w}$. Hence, the vector $\tilde{w}$ vanishes for $\CE^1_{p\times S^{[2]}}\otimes \CL$ for all line bundles $\CL \in \Pic(X)$. It 
therefore also has to vanish on all divisors. 
	
	The same argument as above also shows that any vector
	\[
	K^0_{\textup{top}}(S^{[2]})\xrightarrow{\tilde{w}}\tH(S^{[2]},\BQ)
	\]
	compatible with derived equivalences as above must vanish on all topological line bundles.
\end{rmk}
\section{Integral lattices for $\Kdrein$-type hyper-Kähler manifolds}
\label{sec:lattices}
We want to apply the results and definitions from the previous sections. From now on, $X$ will denote a hyper-Kähler manifold of $\Kdrein$-type with $n>1$. 
\subsection{Lattices}
\label{subsec:lattices}
In this section, we want to discuss the (potential) integral lattices inside the extended Mukai lattice that appear and set up notation which will 
be used throughout the rest of the paper. We will fix for $X$ once and for all an isometry
\begin{equation}
\label{eq:choice_of_delta_for_lattive}
\h^2(X,\BZ)\cong \h^2(S^{[n]},\BZ) \cong \h^2(S,\BZ) \oplus \BZ \delta,
\end{equation}
where $S$ is a K3 surface, $S^{[n]}$ is the $n$-th Hilbert scheme with $2\delta$ the class of the exceptional divisor of the Hilbert--Chow morphism and the second isometry is given by \eqref{eq:isometries_hilb} (for $X=S^{[n]}$ we choose the first isometry to be the identity). 

Let us first quickly review the case of a K3 surface $S$. There, the full 
integral cohomology $\h^{\ast}(S,\BZ)=\tH(S,\BZ)$ with the Mukai pairing is governing the derived category \cite{OrlovEquiFM,MukaiModK3}. That is, an equivalence of K3 surfaces yields a Hodge isometry between the full 
integral cohomologies and two K3 surfaces are derived equivalent if and only if their integral cohomologies are Hodge isometric.  
Moreover, the lattice spanned by the Mukai vectors of topological line bundles equals the full integral cohomology. 

In higher dimensions, the situation changes. There are several relevant lattices. 
\begin{defn}
	We define the \textit{integral extended Mukai lattice} as the lattice
	\[
	\tH(X,\BZ) \coloneqq \BZ \alpha \oplus \h^2(X,\BZ) \oplus \BZ \beta \subset \tH(X,\BQ).
	\]
\end{defn}
Several facts suggest that in higher dimensions this is the wrong lattice 
to look at. Firstly, Definition~\ref{defn:extended_Mukai_line_bundles} and Examples~\ref{ex:lagr_Pn_vect_obj} and \ref{ex:4.15} suggest that one should allow certain denominators. Secondly, we will see in Proposition~\ref{prop:STonHilbn} that derived equivalences do not send integral elements to integral elements. 
\begin{defn}
	For $\delta\in \h^2(X,\BZ)$ as above we define the \textit{$\Kdrein$ lattice} as
	\[
	\Lambda \coloneqq B_{-\delta/2}(\tH(X,\BZ))\subset \tH(X,\BQ).
	\]
\end{defn}
This is independent of our fixed choice of $\delta$, i.e.\ for any class $\gamma\in \h^2(X,\BZ)$ of square $2-2n$ and divisibility $2n-2$ one has 
\[
\Lambda=B_{-\gamma/2}(\tH(X,\BZ))\subset \tH(X,\BQ).
\]
In dimension $4$ this lattice was also considered by Taelman \cite[Thm.\ E]{TaelmanDerHKLL}.

We introduce the notation \[
\tal\coloneqq B_{-\delta/2}(\alpha)=\alpha- \frac{\delta}{2}+ \frac{1-n}{4}\beta, \quad  \tde\coloneqq B_{-\delta/2}(\delta)= \delta + (n-1)\beta.
\] 
With it one can define equivalently the $\Kdrein$ lattice as the lattice
\begin{equation}
\label{eq:ab123}
\Lambda = \BZ \tal \oplus \h^2(S,\BZ) \oplus \BZ\tde \oplus \BZ\beta = 
\Lambda_S \oplus \BZ \tde
\end{equation}
where
\[
\Lambda_S=\BZ \tal \oplus \h^2(S,\BZ) \oplus \BZ \beta.
\]
Note that $\tal$ and $\beta$ still generate an integral hyperbolic plane and that the decomposition $\Lambda_S\oplus \BZ \tde$ is orthogonal. 
The integral extended Mukai lattice and the $\Kdrein$ lattice are isometric as abstract lattices and neither is included in the other when seen inside $\tH(X,\BQ)$. 
\begin{defn}
	The \textit{geometric lattice} $\Lambda_g$ is defined as
	\[
	\Lambda_g\coloneqq \Lambda_S \oplus \BZ \frac{\tde}{2}\subset \tH(X,\BQ).
	\]
\end{defn}
Be aware that the quadratic form of $\Lambda_g$ inherited from $\tH(X,\BQ)$ may not be integral. 

We want to motivate this definition. Recall that $r_X=\frac{n+3}{4}$ and let us look at the lattice generated by all extended Mukai vectors of topological line bundles, i.e.\
\[
\Lambda_{LB}\coloneqq \left\langle \left\{\tilde{v}(\lambda)\coloneqq \alpha + \lambda + \left( \frac{n+3}{4} + \frac{b(\lambda,\lambda)}{2} \right)\beta \suchthat \lambda \in \h^2(X,\BZ) \right\} \right\rangle.
\]
Note that one can write the generators equivalently as
\[
\tilde{v}(\lambda)= \tal + \frac{\tde}{2} + \lambda + \left(1+\frac{b(\lambda,\lambda)}{2}\right)\beta.
\]
If one ignores the term $\frac{\tde}{2}$ for a moment, then the expression resembles the Mukai vector on a K3 surface (where $\tdd=\One + \pt$). 
One can check that as an abstract lattice $\Lambda_{LB}$ is isometric to $\tH(X,\BZ)$.

In Section~\ref{subsec:Examples_Vectors} we saw that there are more objects than line bundles and skyscraper sheaves of points for which we can define an extended Mukai vector. We have
\[
\tilde{v}(\CO_{\BP^n})=l+\frac{\tde}{2}+\beta,\quad \tilde{v}(\CE^1_{p\times S^[n]})=\tal.
\]
\begin{lem}
	The geometric lattice $\Lambda_g$ equals the lattice spanned by $\Lambda_{LB}$ as well as all extended Mukai vectors from Section~\ref{subsec:Examples_Vectors}.
\end{lem}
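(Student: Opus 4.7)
My plan is to prove the two inclusions $\Lambda_{LB} + \langle \text{extra vectors}\rangle \subseteq \Lambda_g$ and $\Lambda_g \subseteq \Lambda_{LB} + \langle \text{extra vectors}\rangle$ separately. Working in the basis $(\tal, \h^2(S,\BZ), \tde, \beta)$ of $\tH(X,\BQ)$, the lattice $\Lambda_g$ is characterised by the condition that the coefficient of $\tde$ lies in $\frac{1}{2}\BZ$ while the other coefficients lie in $\BZ$ (respectively $\h^2(S,\BZ)$). So both inclusions come down to straightforward bookkeeping once one performs the change of basis $\alpha = \tal + \delta/2 + \frac{n-1}{4}\beta$ and $\delta = \tde - (n-1)\beta$.

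For the forward inclusion, the only case that requires a check is $\Lambda_{LB}$, since the two extra vectors $\tal = \tilde v(\CE^1_{p\times S^{[n]}})$, $\beta = \tilde v(k(x))$ and $\tilde v(\CO_{\BP^n}) = l + \tde/2 + \beta$ are manifestly in $\Lambda_g$. Write $\lambda = l + k\delta$ with $l \in \h^2(S,\BZ)$, $k \in \BZ$, and substitute in $\tilde v(\lambda) = \alpha + \lambda + \bigl(\tfrac{n+3}{4} + \tfrac{b(\lambda,\lambda)}{2}\bigr)\beta$. The coefficient of $\tde$ becomes $\tfrac{1}{2}+k \in \tfrac{1}{2}\BZ$, and the coefficient of $\beta$ collapses to $1 + \tfrac{b(l,l)}{2} - (n-1)k(k+1)$, which is an integer because $b(l,l) \in 2\BZ$ and $k(k+1)$ is even. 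Thus $\tilde v(\lambda) \in \Lambda_g$ for every $\lambda \in \h^2(X,\BZ)$.

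For the reverse inclusion, I extract a generating set of $\Lambda_g$ directly from the given vectors. First, $\tal$ and $\beta$ are already present among the extra vectors. Next, the difference
\[
\tilde v(0) - \tal - \beta = \tfrac{\tde}{2}
\]
lies in the sublattice, using $\tilde v(0) = \tal + \tde/2 + \beta$. Finally, for any $\lambda \in \h^2(X,\BZ)$,
\[
\tilde v(\lambda) - \tilde v(0) - \tfrac{b(\lambda,\lambda)}{2}\beta \;=\; \lambda
\]
lies in the sublattice, so $\h^2(S,\BZ) \oplus \BZ\delta = \h^2(X,\BZ)$ is contained in it. Together with $\tal$, $\beta$, $\tde/2$ this exhausts the generators of $\Lambda_g$.

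There is no genuine obstacle here: the argument is a direct verification plus a change of basis. The only point that deserves care is the parity statement in the forward direction (that the $\beta$-coefficient of $\tilde v(\lambda)$ is an integer), which uses both the evenness of the K3 lattice and that $(n-1)k(k+1)$ is always even. Note that the class $\tilde v(\CO_{\BP^n})$ is not actually needed for the reverse inclusion, since $\beta$ is already in the sublattice via the skyscraper sheaf; the lemma nevertheless records it as part of the generating set.
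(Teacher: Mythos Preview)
Your proof is correct and carries out exactly the ``straightforward calculation'' that the paper leaves to the reader; there is no alternative strategy here, and your change-of-basis computation and the parity check for the $\beta$-coefficient are the heart of the matter.

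One small remark: you justify the presence of $\beta$ in the generating set by invoking $\tilde v(k(x))$, but the skyscraper sheaf vector is introduced in the ``Square $0$'' subsection rather than in the examples subsection referenced by the lemma. This does not affect the argument, since in fact $\beta$ already lies in $\Lambda_{LB}$ itself: taking $e,f$ spanning a hyperbolic plane in $\h^2(S,\BZ)$ one has $\tilde v(e+f)-\tilde v(e)-\tilde v(f)+\tilde v(0)=\beta$. With this observation your reverse inclusion needs only the single extra vector $\tal=\tilde v(\CE^1_{p\times S^{[n]}})$, which sharpens your closing remark that $\tilde v(\CO_{\BP^n})$ is redundant.
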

\begin{proof}
	This follows from a straightforward calculation.
\end{proof}
\begin{rmk}
	We expect that for all elements $\CE\in \Db(X)$ for which a (meaningful) 
extended Mukai vector $\tilde{v}(\CE)$ can be defined, one has $\tilde{v}(\CE)\in \Lambda_g$. We will prove in Corollary~\ref{conj:der_inv_geom_lattice} that $\Lambda_g$ is invariant under all parallel transport isometries as well as derived equivalences. 
\end{rmk}
\subsection{Hodge structures}
All the above defined lattices carry a weight-two Hodge structure from their inclusion into $\tH(X,\BQ)$.

\begin{defn}
	For a lattice $\Gamma \subset \tH(X,\BQ)$ we define its \textit{algebraic part} as
	\[
	\Gamma_{\textup{alg}} \coloneqq \Gamma \cap \tH^{1,1}(X,\BC) \subset \tH(X,\BQ)
	\]
	and its \textit{transcendental part} as
	\[
	\Gamma_{\textup{tr}} \coloneqq \Gamma_{\textup{alg}}^{\perp}\cap \Gamma \subset \tH(X,\BQ).
	\]
\end{defn}
With this definition the transcendental part of the integral extended Mukai lattice equals the \textit{transcendental lattice} of the hyper-Kähler manifold $X$, i.e.\ 
\[
\h^2(X,\BZ)_{\textup{tr}} \coloneqq \h^2(X,\BZ)\cap \textup{NS(X)}^{\perp}=\tH(X,\BZ)_{\textup{tr}} \subset \h^2(X,\BZ).
\]
\begin{lem}
\label{lem:transcendental_hodge_structure}
	The transcendental part $\Lambda_{\textup{tr}}$ of the $\Kdrein$ lattice 
$\Lambda$ equals the transcendental lattice of $X$.
\end{lem}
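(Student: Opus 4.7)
The plan is to exploit that $B_{-\delta/2} \in \RO(\tH(X,\BQ))$ is an isometry preserving $\tH(X,\BZ)$ rationally, while the Hodge structure on $\tH(X,\BQ)$ is fixed and places $\alpha$, $\beta$, and $\delta$ all in the $(1,1)$ part. The transcendental computation should then reduce to a transparent pairing calculation inside $\tH(X,\BZ)$, together with the observation that a $B$-field twist by an algebraic class acts trivially on transcendental classes.

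First I would compute $\Lambda_{\textup{alg}}$ explicitly. Any $v \in \Lambda$ can be written as $v = B_{-\delta/2}(r\alpha + \nu + s\beta)$ with $r, s \in \BZ$ and $\nu \in \h^2(X,\BZ)$. The formula for $B_{-\delta/2}$ shows that the $\h^2(X,\BC)$-component of $v$ equals $\nu - r\delta/2$, and $v$ is of Hodge type $(1,1)$ inside $\tH(X,\BC)$ iff this component lies in $\h^{1,1}(X)$. Since $\delta$ is algebraic, this is equivalent to $\nu \in \textup{NS}(X)$. Hence
\[
\Lambda_{\textup{alg}} \;=\; B_{-\delta/2}\bigl(\BZ\alpha \oplus \textup{NS}(X) \oplus \BZ\beta\bigr).
\]

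Next, since $B_{-\delta/2}$ is an isometry, taking orthogonal complements commutes with applying it, and therefore
\[
\Lambda_{\textup{tr}} \;=\; B_{-\delta/2}\Bigl(\bigl(\BZ\alpha \oplus \textup{NS}(X) \oplus \BZ\beta\bigr)^{\perp} \cap \tH(X,\BZ)\Bigr).
\]
A direct pairing computation using $\tbb(\alpha,\beta)=-1$ and $\tbb(\alpha,\alpha) = \tbb(\beta,\beta) = 0$ shows that $w = r\alpha + \nu + s\beta \in \tH(X,\BZ)$ is orthogonal to $\alpha$ and $\beta$ iff $r = s = 0$, and then $w$ is orthogonal to $\textup{NS}(X)$ iff $\nu \in \h^2(X,\BZ)_{\textup{tr}}$. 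Thus the orthogonal complement above is precisely $\h^2(X,\BZ)_{\textup{tr}}$.

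Finally, I would verify that $B_{-\delta/2}$ restricts to the identity on $\h^2(X,\BZ)_{\textup{tr}}$. For $\nu \in \h^2(X,\BZ)_{\textup{tr}}$, one has $\nu \perp \textup{NS}(X)$, in particular $b(\delta,\nu) = 0$, so the defining formula for $B_{-\delta/2}$ yields $B_{-\delta/2}(\nu) = \nu$. Combining this with the previous step gives $\Lambda_{\textup{tr}} = \h^2(X,\BZ)_{\textup{tr}}$. There is no serious obstacle here: the lemma is essentially a bookkeeping statement recording that a $B$-field twist by the algebraic class $-\delta/2$ leaves transcendental classes untouched while only modifying the algebraic summand of $\tH(X,\BZ)$.
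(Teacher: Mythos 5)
Your proof is correct and is essentially the paper's argument in expanded form: the paper simply cites the explicit decomposition $\Lambda = \BZ\tal \oplus \h^2(S,\BZ) \oplus \BZ\tde \oplus \BZ\beta$ from \eqref{eq:ab123}, and your computation via $\Lambda = B_{-\delta/2}(\tH(X,\BZ))$, the compatibility of isometries with orthogonal complements, and the fact that $B_{-\delta/2}$ fixes $\textup{NS}(X)^{\perp}$ amounts to the same bookkeeping. No gaps.
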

\begin{proof}
	Both inclusions follow from \eqref{eq:ab123}. 
\end{proof}
\begin{rmk}
\label{rmk:comparison_twisted_HS}
	The isometry $B_{-\delta/2}$ yields an isometry between the integral extended Mukai lattice $\tH(X,\BZ)$ and the $\Kdrein$ lattice $\Lambda$, which in general does not respect the Hodge structures. However, if we endow 
$\tH(X,\BZ)$ with the twisted Hodge structure associated to the B-field $\delta/2\in \h^2(X,\BQ)$ as defined in \cite[Def.\ 2.3]{HuybrechtsStellariTwisted}, then $B_{-\delta/2}$ induces a Hodge isometry between $\tH(X,\BZ)$ endowed with the twisted Hodge structure and $\Lambda$ equipped with 
the Hodge structure coming from the embedding $\Lambda \subset \tH(X,\BQ)$. 
	
	To see this consider a symplectic form $\sigma \in \h^2(X,\BC)$. The twisted Hodge structure is determined by the element $\sigma + \frac{1}{2} b(\sigma,\delta)\beta$ and this is sent under $B_{-\delta/2}$ to the symplectic form $\sigma$. The untwisted and the twisted Hodge structure on $\tH(X,\BZ)$ have the same transcendental lattice, whereas in the case of K3 
surfaces the transcendental lattice of a twisted Hodge structure associated to a non-trivial Brauer class is always a proper sublattice of the transcendental lattice of the untwisted Hodge structure \cite[Sec.\ 2]{HuybrechtsStellariTwisted}.
\end{rmk}
\section{Derived Monodromy group}
\label{sec:Introcude_DMon}
Let $X$ be a hyper-Kähler manifold of $\Kdrein$-type and let $X_1$ and $X_2$ be deformations of $X$. By this we mean smooth and proper morphisms $\pi_i\colon \CX_i \to B_i$ for $i\in \{1,2\}$ with $B_i$ connected such that there is one point $0_i\in B_i$ with $\pi_i^{-1}(0_i) \cong X$  and another point $b_i\in B_i$ such that $\pi_i^{-1}(b_i)\cong X_i$. Let $\gamma_i\colon \SH(X,\BQ) \cong \SH(X_i,\BQ)$ be parallel transport isometries obtained from choosing a path between $0$ and $b_i$ in $B_i$. Moreover, consider a Fourier--Mukai equivalence $f\colon \Db(X_1) \cong \Db(X_2)$ and denote by $F=f^{\SH}$ the induced isometry. 
\begin{defn}[Taelman]
\label{defn:derived_mon_grp}
The \textit{derived monodromy group} $\DM(X)$ is the subgroup of $\mathrm{O}(\SH(X,\BQ))$ generated by all isometries of the form
\[
\SH(X,\BQ)\xrightarrow{\gamma_1} \SH(X_1,\BQ)\xrightarrow{F}\SH(X_2,\BQ)\xrightarrow{\gamma_2^{-1}}\SH(X,\BQ).
\] 
\end{defn}
We know from Section~\ref{subsec:prelim_der_equi_tael} that the isometry $F$ is induced from an isometry $f^{\tH}$. Similarly, the isometries $\gamma_i$ are by \cite[Prop.\ 4.1]{TaelmanDerHKLL} induced by unique Hodge isometries $\gamma_i^{\tH} \in \RO(\tH(X,\BQ))$. This implies that the derived monodromy group has an inclusion $\DM(X) \subset \mathrm{O}(\tH(X,\BQ))$. 
Throughout this paper we will consider the elements of $\DM(X)$ always as 
isometries of the extended Mukai lattice.

Let now $S^{[n]}$ be the Hilbert scheme of $n$ points on a projective K3 surface $S$. Bridgeland, King, and Reid \cite{BKR} proved the existence of a derived equivalence
\[
\Db(S^{[n]})\cong \Dbn(S^n)
\]
where the latter is the $\FS_n$-equivariant derived category of the product variety $S^n$. For an introduction and notation regarding equivariant categories we refer to \cite{BeckmannOberdieckNotes}. For our purposes, we will not take the equivalence from \cite{BKR}, but the one considered by Krug in \cite{KrugRemarks}
\[
\Psi \colon \Dbn(S^n) \cong\Db(S^{[n]}),
\]
since it has nice properties for the computations we want to perform. 
Consider a line bundle $\CL$ on $S$. There is a natural line bundle $\CL_n$ on $S^{[n]}$ associated to $\CL$ which satisfies $\Psi((\CL^{\boxtimes 
n},1))=\CL_n$ \cite[Thm.\ 1.1]{KrugRemarks}. This yields the well-known 
isomorphisms
\begin{equation}
\label{eq:isometries_hilb}
\Pic(S^{[n]})\cong \Pic(S) \oplus \BZ \delta, \quad \h^2(S^{[n]},\BZ) \cong \h^2(S,\BZ) \oplus \BZ \delta
\end{equation}
where $2\delta=[E]$ is the class of the exceptional divisor of the Hilbert--Chow morphism. Since $\h^1(\FS_n,\BC^{\ast})=\BZ/2\BZ$, the simple 
object $\CL^{\boxtimes n}\in \Db(S^n)$ possesses another linearisation given by tensoring with the sign-representation. It holds 
\begin{equation}
	\label{eq:lb_different_linearization}
\Psi(\CL^{\boxtimes n},-1)=\CL_n \otimes \CO_{S^{[n]}}(-\delta),
\end{equation}
where $\CO_{S^{[n]}}(-\delta)\in \Pic(S^{[n]})$ is the line bundle with first Chern class $-\delta$ \cite[Rem.\ 3.10]{KrugRemarks}. 

Ploog \cite{PloogEquivFinite}, later generalized by Ploog--Sosna \cite{PloogSosnaCYHK}, observed that there is an injective group homomorphism
\[
\phi_{[n]}\colon \Aut(\Db(S)) \times \BZ/2\BZ \hookrightarrow \Aut(\Dbn(S^n)).
\]
More precisely, Orlov's Theorem \cite{OrlovEquiFM} asserts that every auto-equivalence of $\Db(S)$ is given by a Fourier--Mukai functor with kernel $\CE\in \Db(S \times S)$. The kernel $\CE^{\boxtimes n}$ can be canonically equipped with a $\FS_n$-linearisation, where $\FS_n$ acts diagonally 
on $S^n \times S^n$. The factor $\h^1(\FS_n, \BC^{\ast})=\BZ/2\BZ$ corresponds to the two possible linearisations of the kernel. We will often write $\phi_{[n]}(\Phi)$ instead of $\phi_{[n]}((\Phi,1))$. 
Using the equivalence $\Psi$ we also denote the resulting homomorphism
\[
\Aut(\Db(S)) \times \BZ/2\BZ \hookrightarrow \Aut(\Db(S^{[n]}))
\]
obtained via conjugation by $\phi_{[n]}$. 

\begin{lem}
\label{lem:Ploog_map_on_coh}
	Let $\Phi\in \Aut(\Db(S))$ such that $\Phi^\h\in \mathrm{O}(\tH(S,\BZ))$ 
is the identity. Then $\phi_{[n]}(\Phi)$ acts trivially on the extended Mukai lattice $\tH(S^{[n]},\BQ)$.  
\end{lem}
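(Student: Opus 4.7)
The plan is to evaluate $\phi_{[n]}(\Phi)^{\tH}$ on the extended Mukai vectors of the natural generating line bundles $\CL_n$ (for $\CL\in\Pic(S)$) together with $\CO_{S^{[n]}}(-\delta)$, and to read off that the result is the identity. The starting observation is that for a K3 surface the Mukai vector induces an isomorphism $v\colon K^{0}_{\mathrm{top}}(S)\xrightarrow{\sim}\tH(S,\BZ)$, so the hypothesis $\Phi^{\h}=\id$ forces $[\Phi(\CE)]=[\CE]$ in topological $K$-theory for every $\CE\in\Db(S)$. Combined with the formulas $\Psi(\CL^{\boxtimes n},1)=\CL_n$, $\Psi(\CL^{\boxtimes n},-1)=\CL_n\otimes\CO_{S^{[n]}}(-\delta)$, and the description of $\phi_{[n]}(\Phi)$ as $\Phi^{\boxtimes n}$ on $\Dbn(S^n)$ preserving the chosen linearisation, this propagates to
\[
v(\phi_{[n]}(\Phi)(\CL_n))=v(\CL_n)\quad\text{and}\quad v(\phi_{[n]}(\Phi)(\CO_{S^{[n]}}(-\delta)))=v(\CO_{S^{[n]}}(-\delta)).
\]

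Both $\CL_n$ and $\CO_{S^{[n]}}(-\delta)$ are line bundles and hence objects of the $\CO_{S^{[n]}}$-orbit. The relation \eqref{eq:compatibility_ext_vector_lb} together with the sign convention of Definition~\ref{defn:vector_OX_evenN} then shows that the extended Mukai vectors of their images agree with the original ones up to a sign, and combined with the functoriality of the extended Mukai vector this produces
\[
\phi_{[n]}(\Phi)^{\tH}(\tilde v(\CL_n))=\pm\tilde v(\CL_n),\qquad \phi_{[n]}(\Phi)^{\tH}(\tilde v(\CO_{S^{[n]}}(-\delta)))=\pm\tilde v(\CO_{S^{[n]}}(-\delta)).
\]
For odd $n$ no sign ambiguity arises. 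For even $n$, Lemma~\ref{lem:determinant_extended_rank} applied to the structure sheaf $\CO_{S^{[n]}}$, whose image has rank $1>0$, gives $\epsilon(\phi_{[n]}(\Phi)^{\tH})=1$ and pins both signs to $+$.

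Finally I translate these identities into triviality of $\phi_{[n]}(\Phi)^{\tH}$. Substituting the explicit formula $\tilde v(\CL_n)=\alpha+\lambda+(r_X+b(\lambda,\lambda)/2)\beta$ from Definition~\ref{defn:extended_Mukai_line_bundles} into $\phi_{[n]}(\Phi)^{\tH}(\tilde v(\CL_n))=\tilde v(\CL_n)$ and comparing the three cases $\lambda=0$, $\lambda$ and $2\lambda$ (to isolate the $\lambda$-independent, $\lambda$-linear and $\lambda$-quadratic pieces) forces $\phi_{[n]}(\Phi)^{\tH}$ to fix $\alpha+r_X\beta$, every element of $\h^{2}(S,\BQ)$, and $\beta$, hence also $\alpha$. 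Feeding in the analogous identity for $\tilde v(\CO_{S^{[n]}}(-\delta))$ then yields $\phi_{[n]}(\Phi)^{\tH}(\delta)=\delta$, so $\phi_{[n]}(\Phi)^{\tH}$ is the identity on $\tH(S^{[n]},\BQ)=\BQ\alpha\oplus\h^{2}(S,\BQ)\oplus\BQ\delta\oplus\BQ\beta$. The main delicate step is the sign ambiguity for even $n$, which is precisely what Lemma~\ref{lem:determinant_extended_rank} resolves.
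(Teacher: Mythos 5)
Your argument is correct and follows essentially the same route as the paper's proof: reduce, via the isomorphism $v\colon K^0_{\textup{top}}(S)\cong\tH(S,\BZ)$ and the compatibility of $\Psi$ and $\phi_{[n]}$ with (equivariant) topological $K$-theory, to the invariance of the classes of $(\CL^{\boxtimes n},\pm 1)$, deduce that the extended Mukai vectors $\tilde v(\CL_n)$ and $\tilde v(\CL_n\otimes\CO_{S^{[n]}}(-\delta))$ are fixed, and conclude because these span $\tH(S^{[n]},\BQ)$. The one step stated too quickly is the claim that $\epsilon(\phi_{[n]}(\Phi)^{\tH})=\det=1$ ``pins both signs to $+$'' for even $n$: a determinant-one isometry can still negate individual eigenvectors, so you should add that any two of the vectors in question pair non-trivially (their pairing is $-\tfrac{n+3}{2}$ minus an integer, hence non-zero for $n$ even), which forces them all into a single eigenspace of the isometry, whence $\phi_{[n]}(\Phi)^{\tH}=\pm\id$ and $\det=1$ on the odd-rank space $\tH(S^{[n]},\BQ)$ rules out $-\id$ — a point the paper's own proof also leaves implicit.
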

\begin{proof}
	Let $\Phi=\FM_{\CE}$ and let us consider $\FM_{\CE^{\boxtimes n}} \in \Aut(\Db(S^n))$. Using \cite[Exc.\ 5.13]{HuybrechtsFM} and the Künneth formula, one sees that $\FM_{\CE^{\boxtimes n}}$ acts trivial on singular cohomology $\h^{\ast}(S^n,\BQ)$. 
	
	The line bundle $\CL_n \in \Pic(S^{[n]})$ corresponds to the equivariant 
object $(\CL^{\boxtimes n},1)$ in $\Dbn(S^n)$. By \cite[Prop.\ 2.3]{PloogSosnaCYHK} the equivalence $\phi_{[n]}(\Phi)$ sends $(\CL^{\boxtimes n},\pm 1)$ to the objects $(\Phi(\CL)^{\boxtimes n},\pm 1)$. Using the compatibility of Fourier--Mukai transforms with (equivariant) topological $K$-theory \cite[Sec.\ 6]{TaelmanDerHKLL}, one sees that $\phi_{[n]}(\Phi)$ induces an isomorphism of equivariant topological $K$-theory $\mathrm{K}^0_{\textup{top},\FS_n}(S^n)$ which fixes the  classes $[(\CL^{\boxtimes n},\pm 1)]$. 
	
	Moreover, the equivalence $\Psi$ induces an isomorphism $\mathrm{K}^0_{\textup{top},\FS_n}(S^n) \cong \mathrm{K}^0_{\textup{top}}(S^{[n]})$, see \cite[Ch.\ 10]{BKR} or \cite[Thm.\ 8.2]{TaelmanDerHKLL}. This implies that $\phi_{[n]}(\Phi)$ leaves the classes $v(\CL_n)$ and $v(\CL_n \otimes \CO_X(-\delta))$ in $\h^{\ast}(X,\BQ)$ invariant. Using the compatibility \eqref{diag:commut_T_epsilon} we see that the classes $\tilde{v}(\CL_n)$ and $\tilde{v}(\CL_n\otimes \CO(-\delta))$ are fixed by the action of $\phi_{[n]}(\Phi)$ on the extended Mukai lattice. To conclude the proof, simply observe that these classes generate $\tH(X,\BQ)$ as a $\BQ$-vector space, since $\Lambda_{LB}$ from Section~\ref{subsec:lattices} is a full rank lattice. 
\end{proof}
Let $\pi \colon \CS \to B$ be a smooth and proper family of K3 surfaces and consider a path $\gamma \colon [0,1]\to B$. This yields a parallel transport isometry $\h^{\ast}(\CS_{\gamma^{-1}(0)},\BZ)\cong \h^{\ast}(\CS_{\gamma^{-1}(1)},\BZ)$ of the fibres which we will also denote by $\gamma$. The family $\pi$ induces naturally a corresponding family $\pi^{[n]}\colon \CS^{[n]} \to B$ of relative Hilbert schemes over $B$. 
The path $\gamma$ in $B$ then gives for this deformation a corresponding parallel transport isometry $\gamma^{[n]}\colon\h^{\ast}(\CS_{\gamma^{-1}(0)}^{[n]},\BQ) \cong \h^{\ast}(\CS_{\gamma^{-1}(1)}^{[n]},\BQ)$.

Consider an element $g\in \DM(S)$ of the form $g=\gamma'\circ F\circ \gamma$. Here $\gamma$ respectively $\gamma'$ are as above parallel transport isometries obtained from deforming $S$ to $S'$ respectively $S''$ to $S$ and $F=f^{\h}$ for a Fourier--Mukai equivalence $f\colon \Db(S')\cong \Db(S'')$. We associate to $g$ the element $g^{[n]}\coloneqq\gamma'^{[n]}\circ \phi_{[n]}(f)^{\tH}\circ \gamma^{[n]}$. 
\begin{prop}
	\label{prop:map_DMonS_DMonSn}
	The association $g\mapsto g^{[n]}$ yields a well-defined group homomorphism
	\[
	d_n\colon \DM(S)\to \DM(S^{[n]}).
	\]
\end{prop}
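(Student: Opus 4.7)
The plan is to define $d_n$ on the atomic generators as prescribed and extend multiplicatively to all of $\DM(S)$, then check that the extension is consistent: two different presentations of the same element of $\DM(S)$ must give the same image. The key ingredients are Lemma~\ref{lem:Ploog_map_on_coh} and the naturality of the $\phi_{[n]}$ construction under deformation of Fourier--Mukai kernels along families of K3 surfaces.

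For well-definedness on generators, suppose $g = \gamma_1' \circ F_1 \circ \gamma_1 = \gamma_2' \circ F_2 \circ \gamma_2$. Rewriting, one obtains $F_1 = b \circ F_2 \circ a$ where $a = \gamma_2 \circ \gamma_1^{-1} \colon S_1' \to S_2'$ and $b = \gamma_1'^{-1} \circ \gamma_2' \colon S_2'' \to S_1''$ are parallel transports. I would deform the Fourier--Mukai kernel of $f_1$ along the families realising $a^{-1}$ and $b^{-1}$ to produce an equivalence $f_1' \colon \Db(S_2') \cong \Db(S_2'')$ with $(f_1')^{\h} = b^{-1} \circ F_1 \circ a^{-1} = F_2 = f_2^{\h}$. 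Then $f_1' \circ f_2^{-1} \in \Aut(\Db(S_2''))$ acts trivially on cohomology, so Lemma~\ref{lem:Ploog_map_on_coh} (applied on $S_2''$) gives $\phi_{[n]}(f_1')^{\tH} = \phi_{[n]}(f_2)^{\tH}$. Combined with the naturality identity $\phi_{[n]}(f_1')^{\tH} = (b^{[n]})^{-1} \circ \phi_{[n]}(f_1)^{\tH} \circ (a^{[n]})^{-1}$ and the groupoid relations $\gamma_1'^{[n]} \circ b^{[n]} = \gamma_2'^{[n]}$ and $a^{[n]} \circ \gamma_1^{[n]} = \gamma_2^{[n]}$, one concludes
\[
\gamma_1'^{[n]} \circ \phi_{[n]}(f_1)^{\tH} \circ \gamma_1^{[n]} = \gamma_2'^{[n]} \circ \phi_{[n]}(f_2)^{\tH} \circ \gamma_2^{[n]}.
\]
Multiplicativity of $d_n$ and the containment of its image in $\DM(S^{[n]})$ are then immediate from the construction, since each factor $\gamma'^{[n]} \circ \phi_{[n]}(f)^{\tH} \circ \gamma^{[n]}$ is by definition a generator of $\DM(S^{[n]})$.

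The main difficulty is justifying the existence of the deformed equivalence $f_1'$ and its compatibility with the Hilbert scheme construction. The existence of \emph{some} Fourier--Mukai equivalence $f_1'$ realising the cohomological action $F_2 = b^{-1} \circ F_1 \circ a^{-1}$ follows from the derived Torelli theorem for K3 surfaces combined with Orlov's theorem; the subtler point is to verify that $\phi_{[n]}(f_1')^{\tH}$ coincides with $(b^{[n]})^{-1} \circ \phi_{[n]}(f_1)^{\tH} \circ (a^{[n]})^{-1}$, i.e.\ that deformation of the kernel is compatible with the equivariant Bridgeland--King--Reid equivalence $\Psi$ used in the definition of $\phi_{[n]}$. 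This interplay between deformation and $\Psi$ is the substantive content underlying the well-definedness statement.
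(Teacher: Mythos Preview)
Your overall plan—reduce to Lemma~\ref{lem:Ploog_map_on_coh}—is exactly what the paper does, but the route you take to get there introduces a genuine gap. You propose to deform the Fourier--Mukai kernel of $f_1$ along the families realising $a$ and $b$ in order to manufacture an equivalence $f_1'$ with the prescribed cohomological action. This step does not work as stated: an object $\CE\in\Db(S_1'\times S_1'')$ need not deform along arbitrary families of K3 surfaces to an object, let alone to the kernel of an equivalence. Your fallback via derived Torelli does produce \emph{some} equivalence $f_1'$ with $(f_1')^{\h}=F_2$, but then, as you yourself note, there is no reason for the naturality identity $\phi_{[n]}(f_1')^{\tH}=(b^{[n]})^{-1}\circ\phi_{[n]}(f_1)^{\tH}\circ(a^{[n]})^{-1}$ to hold—this identity is precisely the content of the proposition, so at this point the argument is circular. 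A second issue is that your treatment only addresses well-definedness for a single atomic generator; the claim that ``multiplicativity is immediate'' glosses over the fact that a relation among products of generators in $\DM(S)$ must also be shown to map to a relation in $\DM(S^{[n]})$.

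The paper's argument avoids both problems by never leaving topological $K$-theory. Rather than deforming kernels, one tracks the classes $[(\CL^{\boxtimes n},\pm 1)]\in K^0_{\textup{top},\FS_n}(S^n)$ for topological line bundles $\CL$. Parallel transport $\gamma$ sends such a class to the corresponding class for $\gamma(\CL)$, and a Fourier--Mukai equivalence $f$ sends it to the class for $f(\CL)$ (this is \cite[Prop.~2.3]{PloogSosnaCYHK} on the level of $K$-theory). Under the isomorphism $K^0_{\textup{top},\FS_n}(S^n)\cong K^0_{\textup{top}}(S^{[n]})$ induced by $\Psi$ these classes correspond to $[\CL_n]$ and $[\CL_n\otimes\CO(-\delta)]$, whose extended Mukai vectors span $\tH(S^{[n]},\BQ)$. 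Hence for any word $w$ in parallel transports and derived equivalences, the action of $w^{[n]}$ on $\tH(S^{[n]},\BQ)$ is completely determined by the action of $w$ on $\tH(S,\BZ)$. This simultaneously gives well-definedness on generators and on arbitrary products, and is what the paper means by ``as in the proof of Lemma~\ref{lem:Ploog_map_on_coh}''.
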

\begin{proof}
	This follows as in the proof of Lemma~\ref{lem:Ploog_map_on_coh} together with the assertion of Lemma~\ref{lem:Ploog_map_on_coh}.
\end{proof}
\section{Auto-equivalences of Hilbert schemes}
\label{sec:autoeq_hilb}
Let $S$ be a projective K3 surface and $S^{[n]}$ be the $n$-th punctual Hilbert scheme. In this section we will calculate the action of certain auto-equivalences on $\tH(S^{[n]},\BQ)$. 
\subsection{Sign equivalence}
Denote by $F \in \Aut(\Dbn(S^n))$ the auto-equivalence given by tensoring 
with the sign-representation. It is the image of the generator of $\BZ/2\BZ$ under $\phi_{[n]}$. We will also denote by $F$ the auto-equivalence of $\Db(S^{[n]})$ induced via the equivalence $\Psi$. For a vector $v \in \tH^{1,1}(S^{[n]},\BQ)$ we denote by
\[
s_v \in \mathrm{O}(\tH(S^{[n]},\BQ)), \quad x\mapsto x-2 \frac{\tilde{b}(x,v)}{\tilde{b}(v,v)}v
\]
the Hodge isometry given by reflection along $v$. 
\begin{prop}
\label{lem:signrep_on_coh}
	The action of $F$ on $\tH(S^{[n]},\BQ)$ is given by $(-1)^{n+1}s_{\tde}$. 
\end{prop}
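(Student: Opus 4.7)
The plan is to compare $F^{\tH}$ with $(-1)^{n+1}s_{\tde}$ by evaluating both sides on extended Mukai vectors of line bundles, which form a spanning set. The key input is that $F$ acts explicitly on line bundles: combining the description of $\Psi$ after \eqref{eq:isometries_hilb} with \eqref{eq:lb_different_linearization}, one has $F(\CL_n) = \CL_n \otimes \CO(-\delta)$ for every $\CL \in \Pic(S)$, and $F$ is a self-inverse involution.

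First I would compute the action of $s_{\tde}$ directly on the basis $\{\alpha, \mu\in \h^2(S,\BQ), \delta, \beta\}$. Using $\tbb(\tde,\tde) = b(\delta,\delta) = -2(n-1)$, $\tbb(\alpha,\tde) = -(n-1)$, $\tbb(\delta,\tde) = -2(n-1)$, and $\tbb(\beta,\tde) = \tbb(\mu,\tde) = 0$, one reads off $s_{\tde}(\alpha) = \alpha - \delta - (n-1)\beta$, $s_{\tde}(\mu) = \mu$, $s_{\tde}(\delta) = -\delta - 2(n-1)\beta$, $s_{\tde}(\beta) = \beta$. A short algebraic check then shows $s_{\tde}(\tilde{v}(\CL_n)) = \tilde{v}(\CL_n \otimes \CO(-\delta))$ with $\tilde{v}(\CL_n) = \alpha + \lambda + (r_X + \lambda^2/2)\beta$ from Definition~\ref{defn:extended_Mukai_line_bundles}, and by involutivity $s_{\tde}$ also swaps $\tilde{v}(\CL_n \otimes \CO(-\delta))$ back to $\tilde{v}(\CL_n)$.

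For $n$ odd, the extended Mukai vector is functorial without sign ambiguity, so $F^{\tH}(\tilde{v}(\CL_n)) = \tilde{v}(F(\CL_n)) = \tilde{v}(\CL_n \otimes \CO(-\delta)) = s_{\tde}(\tilde{v}(\CL_n))$. The vectors $\tilde{v}(\CL_n)$ for $\lambda$ varying in $\h^2(S,\BQ)$ span $\BQ\alpha \oplus \h^2(S,\BQ) \oplus \BQ\beta$ (obtained from sums and differences of $\tilde{v}(\CL_\mu)$ and $\tilde{v}(\CL_{-\mu})$), and the extra vector $\tilde{v}(\CO(-\delta))$ contributes the $\delta$-direction. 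Hence the identity $F^{\tH} = s_{\tde} = (-1)^{n+1}s_{\tde}$ holds on a spanning set, which finishes the case $n$ odd.

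For $n$ even (note $b_2(S^{[n]}) = 23$ is odd), functoriality only gives $F^{\tH}(\tilde{v}(\CL_n)) = \epsilon_{\CL}\,\tilde{v}(\CL_n \otimes \CO(-\delta))$ with $\epsilon_{\CL} \in \{\pm 1\}$. The first step is to observe that $F$ commutes with $\SM_{\CL_n}$ on $\Dbn(S^n)$ (tensoring with the sign representation commutes with tensoring with $(\CL^{\boxtimes n},1)$), so $F^{\tH}$ commutes with $B_\lambda$ on $\tH(S^{[n]},\BQ)$; together with $B_\lambda(\tilde{v}(\CO)) = \tilde{v}(\CL_n)$ this forces $\epsilon_{\CL} = \epsilon_0$ to be independent of $\CL$, and involutivity $(F^{\tH})^2 = \id$ shows the same $\epsilon_0$ controls $F^{\tH}(\tilde{v}(\CO(-\delta))) = \epsilon_0\tilde{v}(\CO)$. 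Therefore $F^{\tH}$ and $\epsilon_0 s_{\tde}$ agree on a spanning set, so $F^{\tH} = \epsilon_0 s_{\tde}$. The main obstacle is pinning down $\epsilon_0$: since $F(\CO_{S^{[n]}}) = \CO(-\delta)$ has positive rank, Lemma~\ref{lem:determinant_extended_rank} gives $\epsilon(F^{\tH}) = \det(F^{\tH}) = 1$. Since $\dim_\BQ \tH(S^{[n]},\BQ) = 25$ is odd, $\det(-s_{\tde}) = (-1)^{25}(-1) = 1$ while $\det(s_{\tde}) = -1$, so $\epsilon_0 = -1$ and $F^{\tH} = -s_{\tde} = (-1)^{n+1}s_{\tde}$, as claimed.
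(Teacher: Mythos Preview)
Your approach is essentially the same as the paper's, and your explicit computation of $s_{\tde}$ on the basis, the determinant argument in the even case, and the commutation trick with $B_\lambda$ are all fine. There is, however, one genuine gap.

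You establish $F(\CL_n)\cong\CL_n\otimes\CO(-\delta)$ only for \emph{algebraic} $\CL\in\Pic(S)$, but then invoke the spanning property for $\lambda$ ranging over all of $\h^2(S,\BQ)$. For a projective K3 surface $S$ the N\'eron--Severi group can have rank as small as $1$, so the vectors $\tilde v(\CL_n)$ with $\lambda\in\mathrm{NS}(S)\otimes\BQ$ together with $\tilde v(\CO(-\delta))$ span at most a four-dimensional subspace of the $25$-dimensional $\tH(S^{[n]},\BQ)$. Knowing that $F^{\tH}$ agrees with $(-1)^{n+1}s_{\tde}$ on this subspace does not determine $F^{\tH}$: for instance, on the orthogonal complement (which contains the transcendental lattice) a Hodge isometry need not be the identity, and the involutivity $(F^{\tH})^2=\id$ only cuts the possibilities to $\pm\id$ on that piece, which you have not ruled out.

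The paper closes this gap by working in equivariant topological $K$-theory: $F$ exchanges the classes $[(\CL^{\boxtimes n},1)]$ and $[(\CL^{\boxtimes n},-1)]$ in $K^0_{\mathrm{top},\FS_n}(S^n)$ for \emph{all} topological line bundles $\CL$, and under the isomorphism $K^0_{\mathrm{top},\FS_n}(S^n)\cong K^0_{\mathrm{top}}(S^{[n]})$ induced by $\Psi$ this translates (via the Mukai vector and the compatibility \eqref{diag:commut_T_epsilon}, using injectivity of $T$ in degrees $0$ and $2$ together with the isometry property to pin down the $\beta$-coefficient) into the needed statement $F^{\tH}(\tilde v(\CL_n))=\pm\tilde v(\CL_n\otimes\CO(-\delta))$ for all $\lambda\in\h^2(S,\BZ)$. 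Once you insert this step, your argument goes through unchanged; in particular your determinant computation $\det(-s_{\tde})=(-1)^{25}(-1)=1$ then legitimately fixes the sign in the even case.
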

\begin{proof}
	For all topological line bundles $\CL \in K_{\textup{top}}^0(S)$ the involution $F$ exchanges the equivariant objects $(\CL^{\boxtimes n},1)$ and 
$(\CL^{\boxtimes n},-1)$ viewed as elements in equivariant topological $K$-theory $K_{\textup{top}, \FS_n}^0(S^n)$. Thus, by \eqref{eq:lb_different_linearization} the induced isometry $F^{\tH}$ on the extended Mukai lattice exchanges $\tilde{v}(\CL_{n})$ and $\tilde{v}(\CL_n\otimes \CO_{S^{[n]}}(-\delta))$. 
	
	If $n$ is odd, then we conclude from the above that for all $\lambda \in 
\h^2(S,\BZ)\subset \h^2(S^{[n]},\BZ)$ the action on the extended Mukai lattice $F^{\tH}$  satisfies
	\[
	\tal + \frac{\tde}{2} + \lambda + \left( 1+\frac{\tbb(\lambda,\lambda)}{2} \right)\beta \mapsto \tal - \frac{\tde}{2} + \lambda + \left( 1+\frac{\tbb(\lambda,\lambda)}{2} \right)\beta.
	\]
	This property completely characterizes $F^{\tH}$. 
	
	If $n$ is even, Lemma~\ref{lem:determinant_extended_rank} implies that the determinant of $F^{\tH}$ must be one, because $F$ preserves the rank of objects. The result then follows as for $n$ odd.
\end{proof}
\subsection{Spherical twist}
An object $\CE \in \Db(S)$ is called spherical if its $\Ext$-algebra satisfies $\Ext^{\ast}(\CE,\CE)\cong \h^{\ast}(S^2,\BC)$. The auto-equivalence $\ST_{\CE}$ given by the Fourier--Mukai functor $\FM_{\CG}$ with Fourier--Mukai kernel defined via the distinguished triangle
\[
\CE^{\vee}\boxtimes \CE\to \CO_{\Delta}\to \CG
\]
in $\Db(S\times S)$ is called the spherical twist \cite{SeidelThomas}. Its action on the Mukai lattice $\tH(S,\BZ)$ is given by the reflection $s_{v(\CE)}$. 

An important example is the spherical twist $\ST_{\CO_S}$ along the structure sheaf $\CO_S$. It induces on cohomology the reflection along the vector $\One + \pt$. The morphism $\phi_{[n]}$ yields an equivalence $P \in \Aut(\Db(S^{[n]}))$. 
\begin{prop}
	\label{prop:STonHilbn}
	The equivalence $P$ acts on $\tH(S^{[n]},\BQ)$ via the isometry $(-1)^{n+1}s_v$, where $v=\tal + \beta$.
\end{prop}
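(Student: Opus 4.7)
The plan is to determine $P^{\tH}$ by computing its action on the extended Mukai vectors $\tilde v(\CL_n)$ for $\CL \in \Pic(S)$, in close analogy with the proof of Proposition~\ref{lem:signrep_on_coh}. These vectors span $\tH(S^{[n]}, \BQ)$ rationally (they generate $\Lambda_g$ together with their $F^{\tH}$-images), so knowing $P^{\tH}$ on them determines it on the whole space.

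First I would compute $P(\CL_n)$ explicitly for $\CL \in \Pic(S)$ with $c_1(\CL) = \lambda$. Since $\Psi^{-1}(\CL_n) = (\CL^{\boxtimes n}, 1)$ and $\phi_{[n]}(\ST_{\CO_S})$ acts diagonally on equivariant box tensor factors, we have
\[
P(\CL_n) = \Psi\bigl((\ST_{\CO_S}(\CL))^{\boxtimes n}, 1\bigr).
\]
The Mukai vector on the K3 surface $S$ is
\[
v(\ST_{\CO_S}(\CL)) \,=\, s_{\One + \pt}(v(\CL)) \,=\, -\bigl(1 + \tfrac{\lambda^2}{2}\bigr)\One + \lambda - \pt.
\]
As in the proof of Proposition~\ref{lem:signrep_on_coh}, the compatibility of $\Psi$ with equivariant topological $K$-theory (cf.\ Lemma~\ref{lem:Ploog_map_on_coh}) then determines $v(P(\CL_n)) \in \h^{\ast}(S^{[n]}, \BQ)$ and hence its projection $\overline{v(P(\CL_n))} \in \SH(S^{[n]}, \BQ)$.

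Since $P(\CL_n) = (P \circ \SM_{\CL_n})(\CO_{S^{[n]}})$ lies in the $\CO_{S^{[n]}}$-orbit, it admits an extended Mukai vector and one has $\tilde v(P(\CL_n)) = \pm P^{\tH}(\tilde v(\CL_n))$. A direct calculation using $\tbb(\tal, \beta) = -1$ and the orthogonality of $\h^2(S, \BZ)$ and $\tde$ to $v = \tal + \beta$ yields
\[
s_v\bigl(\tal + \tfrac{\tde}{2} + \lambda + (1 + \tfrac{\lambda^2}{2})\beta\bigr) \,=\, -\bigl(1 + \tfrac{\lambda^2}{2}\bigr)\tal + \tfrac{\tde}{2} + \lambda - \beta.
\]
Raising this expression to the $n$-th power in $\Sym^n(\tH(S^{[n]}, \BQ))$ and projecting via $T$ with the help of Lemma~\ref{lem:relation_q_alphabeta}, I would verify that the resulting class agrees with $(-1)^{n+1}\overline{v(P(\CL_n))}$ from the previous step via diagram~\eqref{diag:commut_T_epsilon}. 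For $n$ odd this identifies $P^{\tH}(\tilde v(\CL_n))$ with $s_v(\tilde v(\CL_n))$ directly on a spanning set.

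The main obstacle is controlling the auxiliary sign $\epsilon(P^{\tH}) \in \{\pm 1\}$ when $n$ is even. I would resolve it exactly as in Proposition~\ref{lem:signrep_on_coh}, by applying Lemma~\ref{lem:determinant_extended_rank} to a line bundle whose image under $P$ has strictly positive rank, or equivalently by tracking the sign convention of Definition~\ref{defn:vector_OX_evenN} through the explicit computation $P(\CO_{S^{[n]}}) = \CO_{S^{[n]}}(-\delta)[-n]$ (which follows from the sign picked up by shifting each factor of $\CO_S^{\boxtimes n}$ to $(\CO_S[-1])^{\boxtimes n}$ and the identification \eqref{eq:lb_different_linearization}). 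Once the sign is pinned down, the factor $(-1)^{n+1}$ in the statement is forced, completing the argument; as a conceptual sanity check, $v = \tal + \beta$ corresponds under the natural identification $\Lambda_S \cong \tH(S,\BZ)$ to $\alpha + \beta = \tilde v(\CO_S)$, i.e.\ precisely the vector along which $\ST_{\CO_S}^{\tH}$ reflects on $\tH(S,\BZ)$.
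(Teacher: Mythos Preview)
There is a genuine gap in the step where you claim that knowing $v(\ST_{\CO_S}(\CL))$ on $S$ ``determines $v(P(\CL_n))$'' via the compatibility of $\Psi$ with equivariant topological $K$-theory. What that compatibility gives you is the equality of $K$-theory classes $[P(\CL_n)]=[\Psi\bigl((\ST_{\CO_S}(\CL))^{\boxtimes n},\pm 1\bigr)]$, but you still need an explicit formula for the Mukai vector of $\Psi(\CE^{\boxtimes n},\pm 1)$ in terms of $v(\CE)$ in order to proceed. The analogy with Proposition~\ref{lem:signrep_on_coh} breaks down precisely here: for the sign equivalence $F$ the image of $(\CL^{\boxtimes n},1)$ is again of the form $(\CL^{\boxtimes n},-1)$, hence a line bundle on $S^{[n]}$, whose Mukai vector we know. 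For $P$, however, $\ST_{\CO_S}(\CL)$ is not a line bundle, and the assignment $\CE \mapsto (\CE^{\boxtimes n},1)$ is \emph{not} additive on $K$-theory, so you cannot simply transport the linear formula $s_{\One+\pt}$ from $\tH(S,\BZ)$ to $\SH(S^{[n]},\BQ)$. Without this, neither the rank nor the determinant of $P(\CL_n)$ --- the data that, via Lemma~\ref{lem:prop_vectorizable}(v), would pin down $\overline{v(P(\CL_n))}$ --- is available to you.

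The paper's proof avoids this obstacle by a different, geometric route: it takes a \emph{specific} line bundle $\CL=\CO_S(C)$ associated to a smooth rational curve, so that $\ST_{\CO_S}(\CL)\cong\CL|_C$ is a concrete sheaf, and then computes $\Psi\bigl((\CL|_C)^{\boxtimes n},-1\bigr)\cong\iota_*\omega_Z$ with $Z=C^{[n]}\cong\BP^n\subset S^{[n]}$ explicitly via a Koszul-type resolution. The extended Mukai vector of this object is then determined using orthogonality constraints together with the known homology class of a line in $Z$. Finally a deformation argument (Proposition~\ref{prop:map_DMonS_DMonSn}) extends this to all $-2$ classes, which suffice to span $\tH(S^{[n]},\BQ)$. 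Your handling of the sign $\epsilon(P^{\tH})$ for $n$ even via Lemma~\ref{lem:determinant_extended_rank} is correct and matches the paper, but the core computation needs the geometric input described above (or an equivalent explicit formula for Mukai vectors under $\Psi$ applied to box powers) to go through.
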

\begin{proof}
	We want to understand the images of line bundles under $P$. The spherical twist $\ST_{\CO_S}$ sends the structure sheaf $\CO_S$ to $\CO_S[-1]$. Applying \cite[Prop.\ 2.3]{PloogSosnaCYHK} we see that $P(\CO_{S^n},1)=(\CO_{S^n},-1)[-n]$ and $P(\CO_{S^n},-1)=(\CO_{S^n},1)[-n]$\footnote{The 
fact that the linearisations get exchanged follows from the Koszul sign convention for graded tensor products.}. Lemma~\ref{lem:determinant_extended_rank} shows that $\epsilon(P^{\tH})=1$ if $n$ is even. 
	
	We first consider the case when $n$ is odd. Assume there exists a smooth 
rational curve $C\subset S$ and let $\CL=\CO_S(C)$ be the corresponding 
line bundle with first Chern class $l\coloneqq\Rc_1(\CL)$. Then by Riemann--Roch $\CL$ has a unique section up to scaling and the higher cohomologies of $\CL$ vanish. The auto-equivalence $\ST_{\CO_S}$ sends the line bundle $\CL$ to $\CL|_C$. We infer that the equivariant object $(\CL^{\boxtimes n},-1)$ is being sent to $((\CL|_C)^{\boxtimes n},-1)$ under the auto-equivalence $P$. We want to transfer this identity to the Hilbert scheme via $\Psi$. 
From \eqref{eq:lb_different_linearization} we know that $\Psi((\CL^{\boxtimes n}),-1) \cong \CL_n \otimes \CO_{S^{[n]}}(-\delta)$. 

It is left to compute $\Psi((\CL|_C)^{\boxtimes n},-1)$\footnote{Thanks to the anonymous referee and Georg Oberdieck for spotting a mistake in an earlier version and Georg Oberdieck for discussions on computing images under $\Psi$.}. We claim $\Psi((\CL|_C)^{\boxtimes n},-1) \cong \iota_\ast \omega_{Z}$ for $ \iota \colon Z = C^{[n]} \cong \BP^n \subset S^{[n]}$. We will sketch the arguments, see also \cite[Sec.\ 3.2]{OberdieckLagrangianPlanes} for a thorough computation of this identity. 

The sheaf $\CO_C$ admits the resolution $\CL^\vee \to \CO_X$. Taking the $n$-th box product we obtain
\[
\left[ W^n(\CL^\vee) \to W^{n-1}(\CL^\vee) \to \dots \to W^1(\CL^\vee) \to W^0(\CL^\vee) \right] \cong (\CO_C^{\boxtimes n},1)
\]
where we used the notation as in \cite[Def.\ 3.4]{KrugRemarks}. Tensoring with the object corresponding to the sign-representation and envoking \cite[Lem.\ 3.3]{OberdieckLagrangianPlanes}
\begin{equation}
\label{eq:W_resolution_7_2}
\left[ W^0(\CL) \to W^{1}(\CL) \to \dots \to W^{n-1}(\CL) \to W^n(\CL) \right] \cong ((\CL|_C)^{\boxtimes n},-1).
\end{equation}
Applying $\Psi$ to \eqref{eq:W_resolution_7_2} and using \cite[Thm.\ 1.1]{KrugRemarks} we find
\begin{equation}
	\label{eq:koszul_resolution_7_2}
	\left[ \CO_{S^{[n]}} \to \CL^{[n]} \to \dots \to \bigwedge^{n-1}\CL^{[n]} \to \det(\CL^{[n]}) \right] \cong \Psi((\CL|_C)^{\boxtimes n},-1).
\end{equation}
In particular, the derived dual of $\Psi(\CO_X^{\boxtimes n},-1)$ is via \eqref{eq:koszul_resolution_7_2} identified with the Koszul resolution of a regular section of the bundle $\CL^{[n]}$ shifted by $[n]$. As the zero locus of this section is exactly $Z = C^{[n]}$ the claim follows from Grothendieck--Verdier duality. 
	
	Taking extended Mukai vectors and using Lemma~\ref{lem:prop_vectorizable} we see that $P^{\tH}$ sends the vector $\tilde{v}(\CL_n \otimes \CO_{S^n}(-\delta))=\tal -\frac{\tde}{2}+l$ to $w=\lambda +c\beta$ with $\lambda \in \h^2(S^{[n]},\BQ)$, because $\iota_\ast \omega_Z$ has rank $0$. We already know $P^{\tH}(\tilde{v}(\CO_{S^{[n]}}(-\delta)))=-\tilde{v}(\CO_{S^{[n]}})$ (we assume $n$ odd) and since $P^{\tH}$ is an isometry we conclude that
	\[
	c=\tbb(w,-\tilde{v}(\CO_{S^{[n]}}))=\tbb(\tilde{v}(\CL_n \otimes \CO_{S^{[n]}}(-\delta)),\tilde{v}(\CO_{S^{[n]}}(-\delta)))=\frac{-1-n}{2}.
	\] 
	Similarly we can use that $P^{\tH}(\tilde{v}(\CO_{S^{[n]}}))=-\tilde{v}(\CO_{S^{[n]}}(-\delta))$ to infer
	\[
	w=\lambda' - \frac{\delta}{2} + \frac{-1-n}{2}\beta=\lambda' -\frac{\tde}{2} -\beta
	\]
	with $\lambda'\in \h^2(S,\BQ) \subset \h^2(S^{[n]},\BQ)$. 
	
	Since $Z\cong \BP^n$, all curve classes on $Z$ are multiples of each other. A line in $Z$ is known to have homology class $l+ (n-1)\delta^{\vee}\in \h_2(X,\BZ)$ \cite[Ex.\ 4.11]{HassettTschinkelIntersectionnumbers}, where $\delta^{\vee}$ is the dual class to $\delta$ satisfying $\int_{S^{[n]}} \delta \delta^\vee = 1$. Denoting $s=l-\frac{\delta}{2}$ the cohomology class $s^{2n-1}\in \SH^{4n-2}(S^{[n]},\BQ)$ is Poincar\'e dual to a multiple of the homology class $l+(n-1)\delta^{\vee}$. Therefore, the degree $4n-2$ part in $\SH(S^{[n]},\BQ)$ of $\overline{v(\iota_\ast \omega_Z)}$ must be a multiple of $s^{2n-1}$. Since for $\mu \in \h^2(S^{[n]},\BQ)$ we have
	\[
	\psi(\mu^{2n-1})=\psi(e_{\mu}^{2n-1}(\One))=e_{\mu}^{2n-1}(\psi(\One))=b(\mu,\mu)^{n-1}\frac{(2n)!}{2^nn!}\mu\beta^{n-1}\in \Sym^n(\tH(X,\BQ))
	\]
	we conclude that $\lambda'=l$ and
	\[
	\tal + \frac{\tde}{2}+l \mapsto \frac{\tde}{2}+l -\beta.
	\]
	
	In general, for a class $l\in \h^2(S,\BZ)$ of square $-2$ there exists a 
deformation $S'$ of $S$ such that either $l$ or $-l$ is the class of a smooth rational curve $C'\subset S'$. Using Proposition~\ref{prop:map_DMonS_DMonSn} we can assume that the topological line bundle $\CL$ on $S$ with 
first Chern class $l$ is algebraic and that $\CL\cong \CO_S(C)$, where $C\subset S$ is a smooth rational curve. By the above, we therefore know the image of $\tilde{v}(\CL_n \otimes \CO_{S^{[n]}}(-\delta))$ under $P^{\tH}$. Since the vectors $\tilde{v}(\CL_n \otimes \CO_{S^{[n]}}(-\delta))$ for $\CL$ a topological line bundle on $S$ whose first Chern class has self-intersection $-2$ together with $\tilde{v}(\CO_{S^{[n]}})$ and $\tilde{v}(\CO_{S^{[n]}}(-\delta))$ generate the vector space $\tH(S^{[n]},\BQ)$, we have proven the assertion in the case that $n$ is odd.
	
	If $n$ is even, then the above shows that $P^{\tH}$ must be either $s_v$ 
or $-s_v$. Using that $\epsilon(P^{\tH})=\det(P^{\tH})=1$ yields the assertion.
\end{proof}
\begin{rmk}
\label{rem:Pn_in_Hilb}
	Here is one observation from the proof which might help to understand the extended Mukai lattice $\tH(S^{[n]},\BQ)$. 
	
	Given a smooth rational curve $C\subset S$ inside a K3 surface and the corresponding line bundle $\CL=\CO_S(C) \in \Pic(S)$ we have associated to it a line bundle $\CL_n\in \Pic(S^{[n]})$. Its Mukai vector $v(\CL_n)$ 
has self-pairing $n+1$ under the generalized Mukai pairing. We also associate to $\CL_n$ the class $\tilde{v}(\CL_n)\in \tH(S^{[n]},\BQ)$. This class has self-pairing $-(n+3)/2$. 
	
	The auto-equivalence $P$ induced from the spherical twist $\ST_{\CO_S}$ via Ploog's map $\phi_{[n]}$ sends the line bundle $\CL_n \otimes \CO_{S^{[n]}}(-\delta)$ to $\iota_\ast \omega_Z$. 
This is compatible with the pairings since the self-intersection of the projective space $\BP^n\cong C^{[n]}\subset S^{[n]}$ is $(-1)^n (n+1)$. The image of $\tilde{v}(\CL_n \otimes \CO_{S^{[n]}}(-\delta))$ under $P^{\tH}$ is 
	\[
	l-\frac{\delta}{2} +\frac{-1-n}{2}\beta=l-\frac{\tde}{2} - \beta. 
	\]
	Its self-intersection is equal to $-(n+3)/2$ which is exactly the value of $b(\ell,\ell)$, where $\ell$ is the class of a line in the projective space $C^{[n]}$ and we view a curve class as an element in $\h^2(S^{[n]},\BQ)$ via Poincar\'e duality. 
\end{rmk}
In the above prove we have calculated  $\Psi(\CO_C^{\boxtimes n},-1)$. One can also consider the image of $(\CO_C^{\boxtimes n},1)$ under $\Psi$, i.e.\ with the canonical linearization. One can show that this is $\CO_{Y_C}$, where $Y_C \subset S^{[n]}$ is the reducible subscheme which is the preimage of $C^{(n)} \subset S^{(n)}$ under the Hilbert--Chow morphism.  
\subsection{From K3 surfaces to Hilbert schemes}
We can now describe the homomorphism $d_n$ from Proposition~\ref{prop:map_DMonS_DMonSn}. 
Consider the natural inclusion $\tH(S,\BQ) \hookrightarrow \tH(S,\BQ) \oplus \BQ \delta = \tH(S^{[n]},\BQ)$ of quadratic spaces. For $g\in \RO(\tH(S,\BQ))$ we define $\iota(g) \in \RO(\tH(S^{[n]},\BQ))$ via $\iota(g)(\lambda) =g(\lambda)$ for $\lambda \in \tH(S,\BQ) \subset \tH(S^{[n]},\BQ)$ and $\iota(g)(\delta)=\delta$. This yields a group homomorphism
\[
\iota \colon \RO(\tH(S,\BQ)) \to \RO(\tH(S^{[n]},\BQ)).
\]
\begin{thm}
\label{thm:description_dn}
	The homomorphism $d_n\colon \DM(S)\to \DM(S^{[n]})$ is given by
	\[
	g\mapsto \det(g)^{n+1}  B_{-\delta/2} \circ \iota(g) \circ B_{\delta/2}.
	\]
\end{thm}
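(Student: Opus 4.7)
The plan is to verify the formula on a generating set of $\DM(S)$, using that both sides are group homomorphisms $\DM(S) \to \RO(\tH(S^{[n]},\BQ))$. The map $d_n$ is a homomorphism by Proposition~\ref{prop:map_DMonS_DMonSn}, and a short check---using multiplicativity of $\det$, the fact that $\iota$ is a group homomorphism, and $B_{-\delta/2}B_{\delta/2}=\id$---shows the RHS is as well. Combining Orlov's theorem, the HLOY--HMS description of $\Aut(\Db(S))$ on $\tH(S,\BZ)$, and the parallel transport part built into Proposition~\ref{prop:map_DMonS_DMonSn}, $\DM(S)$ is generated by line bundle twists $B_\lambda$, the spherical reflection $s_{\alpha+\beta}$, the shift $-\id$, and parallel transport operators $\gamma$; it therefore suffices to verify the formula on each.

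For $g=B_\lambda$ with $\lambda\in \h^2(S,\BZ)$, one has $\det(g)=1$ and $b(\lambda,\delta)=0$, so additivity $B_aB_b=B_{a+b}$ gives $B_{-\delta/2}\iota(B_\lambda)B_{\delta/2}=B_\lambda$, matching $d_n(B_\lambda)=\SM_{\CL_n}^{\tH}=B_\lambda$. For $g=s_{\alpha+\beta}$ with $\det(g)=-1$, Proposition~\ref{prop:STonHilbn} identifies $d_n(g)=(-1)^{n+1}s_{\tal+\beta}$; since conjugating a reflection by an isometry yields the reflection along the image, the RHS equals $(-1)^{n+1}s_{B_{-\delta/2}(\alpha+\beta)}=(-1)^{n+1}s_{\tal+\beta}$, as required. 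For parallel transport $\gamma$, which fixes $\alpha$ and $\beta$, a case-by-case computation on $\alpha,\beta,\delta$, and on $\mu\in \h^2(S)$ shows $B_{-\delta/2}\iota(\gamma)B_{\delta/2}=\sigma$, where $\sigma$ is the natural lift acting as $\gamma$ on $\h^2(S)$ and trivially on $\alpha,\beta,\delta$. The Taelman sign convention (Section~\ref{subsec:prelim_der_equi_tael}), operative because $b_2(S^{[n]})=23$ is odd, then pins down $\gamma^{[n],\tH}=\sigma$ for $n$ odd and $\gamma^{[n],\tH}=\det(\gamma)\sigma$ for $n$ even, exactly matching the prefactor $\det(\gamma)^{n+1}$.

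The remaining case $g=-\id$ (the shift) reveals the essential subtlety. By the Koszul sign convention implicit in the Ploog map---already encountered in the linearisation swap in the proof of Proposition~\ref{prop:STonHilbn}---one has $\phi_{[n]}([1])=[n]\circ F$ rather than $[n]$, and combining Taelman's value for $[n]^{\tH}$ with $F^{\tH}=(-1)^{n+1}s_{\tde}$ from Proposition~\ref{lem:signrep_on_coh} yields $d_n(-\id)=-s_{\tde}$ in both parities. A direct calculation shows $B_{-\delta/2}\iota(-\id)B_{\delta/2}=-s_{\tde}$, and with $\det(-\id)^{n+1}=1$ the two agree. The main obstacle is precisely this sign bookkeeping: Taelman's convention $\epsilon=\det$ on $\tH(S^{[n]},\BQ)$ and the Koszul sign hidden in $\phi_{[n]}$ interact nontrivially, and the universal factor $\det(g)^{n+1}$ in the statement is exactly the correction that reconciles them.
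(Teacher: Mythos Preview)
Your proof is correct and follows the same strategy as the paper: observe that both sides are group homomorphisms and verify on generators. The only real difference is the choice of generating set. The paper invokes the identification $\DM(S)=\RO^+(\tH(S,\BZ))$ together with \cite[Prop.\ 3.4]{GHS_Pi1}, which says this group is already generated by the $B_\lambda$ for $\lambda\in\h^2(S,\BZ)$ and the single reflection $s_{\alpha+\beta}$; it then checks only these two cases. Your additional verifications for $-\id$ and for parallel transport operators are correct (the Koszul and Taelman sign bookkeeping you describe is accurate), but they are redundant: both $-\id$ and any parallel transport isometry are already words in the $B_\lambda$ and $s_{\alpha+\beta}$, so once the formula holds on those it holds everywhere. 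The payoff of the paper's route is that all the delicate sign analysis you carry out in the last paragraph becomes unnecessary --- the single computation from Proposition~\ref{prop:STonHilbn} handles the entire determinant factor.
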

\begin{proof}
	The group $\DM(S)$ is equal to the group of orientation-preserving isometries $\RO^+(\tH(S,\BZ))$ of the full integral cohomology \cite{HLOYAutoK3,HMSK3Orientation}. This group is generated by the reflection along the $-2$-vector $\One + \pt$ and the isometries $B_\lambda$ for $\lambda\in \h^2(S,\BZ)$ \cite[Prop.\ 3.4]{GHS_Pi1}. Note that the assignment of the statement of the theorem does define a group homomorphism $\RO(\tH(S,\BQ))\to \RO(\tH(S^{[n]},\BQ))$. Hence, one can check on generators of $\DM(S)$ that this morphism agrees with $d_n$. This is a straightforward calculation.
\end{proof}
\section{Invariant Lattice}
\label{sec:computation_upper_bound_DMON_K3n}
Let $X$ be again a $\Kdrein$-type hyper-Kähler manifold with $n>1$. Any $\Gamma \cong \BZ^{25}$ with an inclusion $\Gamma \hookrightarrow \tH(X,\BQ)$ inherits a quadratic form which takes values in the rational numbers. We will denote by $\RO(\Gamma)\subset \RO(\tH(X,\BQ))$ the group of 
all isometries $\gamma$ satisfying $\gamma(\Gamma)= \Gamma$. 

The main goal of this section is to prove the following result. 

\begin{thm}
\label{thm:derived_mon_grp_K3n}
	Let $X$ be a $\Kdrein$-type hyper-Kähler manifold. There are inclusions
	\[
	\hat{\RO}^+(\Lambda)\subset \DM(X) \subset \RO(\Lambda).
	\]
	In particular, the $\Kdrein$ lattice $\Lambda$ is fixed by all derived equivalences. 
\end{thm}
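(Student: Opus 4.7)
A general element of $\DM(X)$ has the form $g=\gamma_2^{-1}\circ f^{\tH}\circ \gamma_1$, where $\gamma_i$ are parallel-transport lifts to $\tH$ and $f\colon \Db(X_1)\cong \Db(X_2)$ is a derived equivalence between $\Kdrein$-type hyper-Kähler manifolds. I would therefore decouple the two inclusions in the theorem and, for the upper bound, treat parallel transport and derived equivalences separately.

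\textbf{Upper bound, parallel transport step.} The first step is to check that every parallel transport isometry $\gamma^{\tH}$ preserves $\Lambda$. The key is the intrinsic characterization $\Lambda_X = B_{-\gamma/2}(\tH(X,\BZ))$ for any $\gamma\in \h^2(X,\BZ)$ of square $2-2n$ and divisibility $2n-2$. Since parallel transport preserves the integral cohomology lattice and takes any such distinguished class in $\h^2(X_1,\BZ)$ to one in $\h^2(X_2,\BZ)$, the conjugation identity $\gamma^{\tH}\circ B_{-\delta_1/2} = B_{-\gamma(\delta_1)/2}\circ \gamma^{\tH}$ gives $\gamma^{\tH}(\Lambda_{X_1})=\Lambda_{X_2}$ immediately.

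\textbf{Upper bound, derived equivalence step.} For $f\colon \Db(X_1)\cong \Db(X_2)$, I would proceed in two stages. First, by deforming $X_1,X_2$ to Hilbert schemes of K3 surfaces and using that $f^{\h}$ preserves the topological $K$-theory lattice $v(K^0_{\textup{top}}(X_i))\subset \h^*(X_i,\BQ)$ (cf.\ the proof of Theorem~\ref{thm:general_structure} and \cite{AddingtonThomas,TaelmanDerHKLL}), combined with diagram~\eqref{diag:commut_T_epsilon}, one reduces to showing that the image under $f^{\tH}$ of the extended Mukai vectors $\tilde v(\CL)$ of topological line bundles lies in $\Lambda_g$ on the target. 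Second, for $X_i = S_i^{[n]}$, the calculations of Section~\ref{sec:autoeq_hilb} give explicit formulas for the natural generators of the derived autoequivalence group on $\tH(S^{[n]},\BQ)$: line bundle twists act by $B_\lambda$ (which commutes with $B_{\pm\delta/2}$ and so preserves $\Lambda$), Ploog-lifted K3 equivalences act by $\det(g)^{n+1} B_{-\delta/2}\circ \iota(g)\circ B_{\delta/2}$ (Theorem~\ref{thm:description_dn}, which preserves $\Lambda$ because $\iota(g)$ preserves $\tH(S^{[n]},\BZ)$), and the sign equivalence by $\pm s_{\tde}$ (Proposition~\ref{lem:signrep_on_coh}, preserving $\Lambda$ since $\tde\in \Lambda$ is primitive and $\tbb(x,\tde)\in (2-2n)\BZ$ for every $x\in \Lambda$). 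Finally, the identification of $\Lambda$ as the unique index-two sublattice of $\Lambda_g$ distinguished by containing $\tde$ rather than $\tde/2$ is preserved under any isometry of $\tH(X,\BQ)$ that sends $\Lambda_g$ to itself, completing the descent from $\Lambda_g$ to $\Lambda$.

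\textbf{Lower bound.} For $\hat{\RO}^+(\Lambda)\subset \DM(X)$, my plan is to exhibit a generating set of $\hat{\RO}^+(\Lambda)$ all of whose members are realized by derived equivalences or parallel transports. A convenient such set, obtained by standard Eichler--Nikulin arguments on the lattice $\Lambda\cong \h^2(X,\BZ)\oplus U$, consists of (i) the $B$-field transformations $B_\lambda$ for $\lambda\in \h^2(X,\BZ)$, realized by $\SM_\CL$; (ii) the full monodromy group $\RO^+(\h^2(X,\BZ))$ acting on the $\h^2$-summand, realized by parallel transport in the $\Kdrein$ family together with Markman's description of the monodromy; (iii) the image of $\DM(S) = \RO^+(\tH(S,\BZ))$ under the homomorphism $d_n$ of Theorem~\ref{thm:description_dn}; and (iv) the reflection $s_{\tde}$ supplied by the sign equivalence. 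A discriminant-group bookkeeping shows that the subgroup these generate is precisely the group of Hodge isometries of real spinor norm $+1$ acting on $\Lambda^\vee/\Lambda\cong \BZ/(2n-2)$ via $\pm\id$, i.e.\ $\hat{\RO}^+(\Lambda)$.

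\textbf{Main obstacle.} The most delicate step is the descent from the geometric lattice $\Lambda_g$ (which is naturally spanned by extended Mukai vectors) to $\Lambda$ itself, since $\Lambda$ is not literally generated by geometric classes; here one must leverage a careful lattice-theoretic characterization of $\Lambda$ inside $\Lambda_g$, combined with the deformation argument that transports the derived equivalence into the Hilbert-scheme range where the explicit computations of Section~\ref{sec:autoeq_hilb} apply.
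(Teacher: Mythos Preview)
Your approach to the upper bound has a fundamental gap. In the ``derived equivalence step'' you write that for $X_i=S_i^{[n]}$ the calculations of Section~\ref{sec:autoeq_hilb} give explicit formulas for ``the natural generators of the derived autoequivalence group''. But we do \emph{not} know that line bundle twists, the sign equivalence, and Ploog-lifted K3 equivalences generate $\Aut(\Db(S^{[n]}))$ --- this is wide open already for $n=2$. So even for Hilbert schemes your case-by-case verification cannot cover all derived equivalences. The preceding reduction is also problematic: you propose to deform $X_1,X_2$ to Hilbert schemes, but a derived equivalence $f\colon \Db(X_1)\cong \Db(X_2)$ does not in general deform along; there is no mechanism to transport an arbitrary $f^{\tH}$ into the Hilbert-scheme range. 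Finally, your descent from $\Lambda_g$ to $\Lambda$ (``the unique index-two sublattice distinguished by containing $\tde$'') is not justified: when $n$ is odd the quadratic form on $\Lambda_g$ can be integral, and there is no obvious invariant characterization of $\Lambda$ inside $\Lambda_g$ preserved by \emph{all} isometries.

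The paper's argument for the upper bound is entirely different and does not attempt to classify equivalences. It first observes that $\DM(X)$ preserves the integral structure $v(K^0_{\mathrm{top}}(X))\cap \SH(X,\BQ)$, hence is contained in \emph{some} arithmetic subgroup $\RO(\Gamma)$ (Lemma~\ref{lem:exist_lattice}). It then uses the \emph{lower bound} $\hat\RO^+(\Lambda)\subset \DM(X)$ --- already established --- to force $\Gamma$ (up to scaling) into the one-parameter family $k\Lambda_S\oplus\BZ\tde$ with $k\mid 2n-2$ (Lemmas~\ref{lem:form_of_lattice}, \ref{lem:lattice_reduction_lsquared}). The final step rules out $k>1$ by a direct contradiction: if some $\gamma\in\DM(X)$ fails to preserve $\Lambda$, one manufactures (using Eichler transitivity inside $\Lambda_S$) an element of $\DM(X)$ sending $\tilde v(\CO_X)$ to a vector whose degree-$2$ component is a non-integral multiple of $\delta$; but every $\gamma\in\DM(X)$ preserves $\overline{v(K^0_{\mathrm{top}}(X))}$, and the degree-$2$ component of any Mukai vector is integral. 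This argument never appeals to a generating set for $\Aut(\Db(X))$. Your lower-bound sketch is in the right spirit, though the paper's route via Eichler transvections (decompose $\Lambda=U\oplus\Lambda'$ with $U=\langle\tal,-\beta\rangle$ and realise $t(-\beta,\lambda)=B_\lambda$ and $s_{\tal+\beta}$ via $\phi_{[n]}(\ST_{\CO_S})$) is cleaner and avoids the misstatement that the monodromy group equals $\RO^+(\h^2(X,\BZ))$.
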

The group $\hat{\RO}^+(\Lambda)$ is the group of all isometries with spinor norm 1 and which act via $\pm \id$ on the discriminant group. For $n=2$ this result was also obtained by Taelman \cite[Thm.\ 9.8]{TaelmanDerHKLL}.

\subsection{Realizing orthogonal transformations as derived equivalences}
The first inclusion follows easily from the results of the last sections.

\begin{prop}
\label{prop:first_incl_DM}
	There is an inclusion
	\[
	\hat{\RO}^+(\Lambda)\subset \DM(X).
	\]
\end{prop}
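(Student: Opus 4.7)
The plan is to reduce to $X = S^{[n]}$ by parallel transport and then realize a generating set of $\hat{\RO}^+(\Lambda)$ using the explicit derived equivalences computed in Sections~\ref{sec:Introcude_DMon} and \ref{sec:autoeq_hilb}. Since every $\Kdrein$-type manifold is deformation-equivalent to some Hilbert scheme $S^{[n]}$ on a projective K3 surface $S$, parallel transport furnishes an element of $\DM(X)$ that identifies $\Lambda_X$ with $\Lambda_{S^{[n]}}$ and, by conjugation, intertwines $\DM(X)$ with $\DM(S^{[n]})$ and $\hat{\RO}^+(\Lambda_X)$ with $\hat{\RO}^+(\Lambda_{S^{[n]}})$. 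Thus I may assume $X = S^{[n]}$.

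Next, I would collect the explicit elements of $\DM(S^{[n]})$ already produced: the Eichler transvections $B_\lambda$ for every $\lambda \in \h^2(S^{[n]},\BZ)$, obtained by tensoring with line bundles; the reflection $s_{\tde}$ up to sign, coming from the sign equivalence $F$ (Proposition~\ref{lem:signrep_on_coh}); the reflection $s_{\tal + \beta}$ up to sign, coming from the Ploog spherical-twist equivalence $P$ (Proposition~\ref{prop:STonHilbn}); the image of the homomorphism $d_n \colon \RO^+(\tH(S,\BZ)) = \DM(S) \to \DM(S^{[n]})$ from Theorem~\ref{thm:description_dn}, which after conjugation by $B_{\delta/2}$ embeds $\RO^+(\tH(S,\BZ))$ (up to a determinant sign) as the stabilizer of $\tde$ inside $\RO(\Lambda)$; and the parallel-transport monodromy of $\Kdrein$-type manifolds acting on $\h^2(S^{[n]},\BZ)$, which by Markman's results furnishes a large subgroup of $\RO(\h^2(S^{[n]},\BZ))$.

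The heart of the argument is the lattice-theoretic verification that these generate $\hat{\RO}^+(\Lambda)$. Using the decomposition $\Lambda = \Lambda_S \oplus \BZ\tde$, where $\Lambda_S$ contains two orthogonal hyperbolic planes (the K3 plane and the one spanned by $\tal,\beta$), classical presentations of orthogonal groups by Eichler transvections and reflections along $(-2)$-vectors show that $d_n(\RO^+(\tH(S,\BZ)))$ together with the $B_\lambda$ generate the subgroup of $\RO^+(\Lambda)$ fixing $\tde$ and acting as $\pm\id$ on the discriminant of $\Lambda_S$. Composing with the reflection $s_\tde$ then controls the $\tde$-direction independently and produces the full group of isometries of $\Lambda$ acting as $\pm\id$ on the discriminant. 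Spinor-norm positivity is automatic because Fourier--Mukai derived equivalences and parallel-transport operators preserve the canonical orientation of the positive cone (\cite{HMSK3Orientation,TaelmanDerHKLL}).

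The main obstacle will be bookkeeping: the factor $(-1)^{n+1}$ in Propositions~\ref{lem:signrep_on_coh} and \ref{prop:STonHilbn} and the factor $\det(g)^{n+1}$ in Theorem~\ref{thm:description_dn} force a case distinction on the parity of $n$, and one must verify that the resulting signs combine correctly into elements acting as $\pm\id$ on the discriminant. Once these signs are tracked, the generation step can follow the pattern of \cite{GHS_Pi1} for orthogonal groups of even lattices containing two hyperbolic planes.
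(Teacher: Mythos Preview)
Your proposal assembles more than enough ingredients and is correct in spirit, but it takes a noticeably more circuitous route than the paper and leaves the key generation step unjustified.

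The paper's argument is short and uses only two of the tools you list: the isometries $B_\lambda$ for $\lambda\in\h^2(S^{[n]},\BZ)$ coming from tensoring with line bundles, and the reflection $s_{\tal+\beta}$ from Proposition~\ref{prop:STonHilbn}. The point is to work with the hyperbolic plane $U=\langle\tal,-\beta\rangle\subset\Lambda$ rather than with the decomposition $\Lambda=\Lambda_S\oplus\BZ\tde$. By \cite[Prop.~3.4]{GHS_Pi1} one has $\widetilde{\mathrm{SO}}^+(\Lambda)=E_U(\Lambda')$, and the Eichler transvections $t(-\beta,\lambda)$ for $\lambda\in\Lambda'=\h^2(S,\BZ)\oplus\BZ\tde$ are exactly the $B_\lambda$ (including $t(-\beta,\tde)=B_\delta$). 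Since $s_{\tal+\beta}$ swaps $\tal$ and $-\beta$, conjugating gives the remaining transvections $t(\tal,\lambda)$, and one is done with $\widetilde{\mathrm{SO}}^+(\Lambda)$. The quotient $\hat{\RO}^+(\Lambda)/\widetilde{\mathrm{SO}}^+(\Lambda)$ is then handled by the shift $[1]$ (acting as $-\id$) together with the isometry of Proposition~\ref{prop:STonHilbn}. Neither $d_n$, nor $s_{\tde}$, nor Markman's monodromy is needed.

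Your sketch, by contrast, organises the argument around the stabiliser of $\tde$ and then claims that ``composing with $s_{\tde}$ controls the $\tde$-direction and produces the full group''. This step is not correct as stated: the stabiliser of $\tde$ together with $s_{\tde}$ only moves $\tde$ to $\pm\tde$, whereas the $\hat{\RO}^+(\Lambda)$-orbit of $\tde$ consists of all primitive vectors of square $2-2n$ and divisibility $2n-2$ (e.g.\ $\tde+(2n-2)\tal$). To close this gap you would need a separate transitivity argument using $B_\delta$ and the monodromy operators---doable, but it is precisely what the Eichler presentation with respect to $\langle\tal,-\beta\rangle$ gives you for free. Finally, your remark on spinor-norm positivity addresses the wrong direction: it shows the generators lie in $\RO^+$, whereas what is needed is that they \emph{generate} all of $\hat{\RO}^+(\Lambda)$.
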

\begin{proof}
	The shift $[1]$ acts on the extended Mukai lattice by $-\id$ and therefore acts non-trivially on the discriminant lattice and has determinant $-1$. Proposition~\ref{prop:STonHilbn} endows us with an isometry whose action on the discriminant lattice is trivial if and only if its determinant is non-trivial and vice versa. Hence, it suffices to show that $\widetilde{\mathrm{SO}}^+(\Lambda)$, i.e.\ the group of all isometries with spinor norm and determinant 1 acting trivially on the discriminant, is contained in $\DM(X)$. For this we will use the notion of Eichler transvections, for details and notations see \cite[Sec.\ 3]{GHS_Pi1}.
	
	Let us orthogonally decompose \[
	\Lambda = U \oplus \Lambda'
	\]
	where the hyperbolic plane $U$ is spanned by $\tal$ and $-\beta$. The group $\widetilde{\mathrm{SO}}^+(\Lambda)$ equals the group $E_U(\Lambda')$ 
of unimodular transvections \cite[Prop.\ 3.4]{GHS_Pi1}. For $\lambda \in \Lambda'$ the Eichler transvection $t(-\beta,\lambda)$ equals $B_{\lambda}$ (note that for $\tde \in \Lambda'$ the transvection $t(-\beta,\tde)$ also equals $B_{\delta}$). Using tensoring with line bundles we see that all these isometries are contained in $\DM(X)$. Furthermore, we infer from 
Proposition~\ref{prop:STonHilbn} that the reflection $s_v$ along the vector $v=\tal + \beta$ lies in $\DM(X)$. This involution exchanges $\tal$ and $-\beta$ and acts trivially on $\Lambda'$. Using \cite[Eq.\ (6)]{GHS_Pi1} we deduce that the transvections $t(\tal,\lambda)$ for $\lambda \in \Lambda'$ are contained in $\DM(X)$. By \cite[Prop.\ 3.4]{GHS_Pi1} these isometries generate $\widetilde{\mathrm{SO}}^+(\Lambda)$ yielding the assertion.
\end{proof}
\subsection{Finding derived invariant lattices}
The proof of the other inclusion in Theorem~\ref{thm:derived_mon_grp_K3n} 
will occupy the remainder of this section. 
\begin{lem}
\label{lem:exist_lattice}
	There exists a lattice $\Gamma  \hookrightarrow \tH(X,\BQ)$ of rank $25$ 
such that $\DM(X) \subset \mathrm{O}(\Gamma)$.
\end{lem}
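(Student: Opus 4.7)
The plan is to set
\[
\Gamma \coloneqq \sum_{g \in \DM(X)} g(\Lambda)\ \subset\ \tH(X,\BQ),
\]
which is tautologically $\DM(X)$-invariant and contains $\Lambda$, hence of rank $25$. The entire content of the lemma is therefore to show that $\Gamma$ is finitely generated as a $\BZ$-module, equivalently that there exists a positive integer $N$ with $N\Gamma \subset \Lambda$.

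The key input will be topological $K$-theory. Every $g \in \DM(X)$ is realised as the $\tH$-component of a composition of parallel transports and cohomological Fourier--Mukai transforms, and such operations preserve the image of $v\colon K^0_{\mathrm{top}}(-) \to \h^\ast(-,\BQ)$ (up to the natural identification of source and target). Projecting to the Verbitsky component produces a full-rank integral lattice $L_{\SH}\subset\SH(X,\BQ)$ that is stabilised up to sign by $\rho^{\SH}(g)$ for every $g\in\DM(X)$. Through the embedding $\psi\colon\SH(X,\BQ)\hookrightarrow\Sym^n(\tH(X,\BQ))$ and the commutative diagram~\eqref{diag:main_diag_sec2}, this control passes to $\Sym^n(\tH(X,\BQ))$: the sublattice $\psi(L_{\SH})$ is preserved up to sign by $\Sym^n g$ for every $g\in\DM(X)$.

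I would then feed this into the explicit relations between extended Mukai vectors and projections of Mukai vectors provided by Proposition~\ref{prop:todd_in_alphabeta} and equation~\eqref{eq:defining_prop_kx_extended_vector}. Concretely, for the projection $P=\psi\circ T\colon\Sym^n(\tH(X,\BQ))\to\psi(\SH(X,\BQ))$, one has $P(\tilde v(\CO_X)^n/n!)=\psi(\overline{\tdd})\in\psi(L_{\SH})$, and analogously for $\tilde v(\CL)$ for line bundles $\CL$ and for $\beta$ via $\beta^n/c_X$. Because $\Sym^n g$ preserves $\psi(\SH)$ and (as an isometry) its orthogonal complement, it commutes with $P$; consequently $P(g(\tilde v(\CO_X))^n/n!)\in\psi(L_{\SH})$ and similarly for the images of $\tilde v(\CL)$ and $\beta$. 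Since the extended Mukai vectors of line bundles together with $\beta$ span $\tH(X,\BQ)$ (they generate the full-rank lattice $\Lambda_{LB}$ of Section~\ref{subsec:lattices}), uniform bounds on the denominators of $g(\tilde v(\CL))$ and $g(\beta)$ would give $g(\Lambda)\subset\tfrac{1}{N}\Lambda$ for all $g$, hence $\Gamma\subset\tfrac{1}{N}\Lambda$.

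The main obstacle will be the concrete extraction of bounded denominators on $g(v)\in\tH(X,\BQ)$ from the integrality of the projected $n$-th power $P(g(v)^n)$. The projection loses information and taking $n$-th roots is not an integral operation, so this step requires care. The remedy is Remark~\ref{rmk:Injectivity_of_T}: the projection $T$ is injective in degrees $0$ and $2$, so the $\alpha$-coefficient and $\h^2$-component of $g(\tilde v(\CL))$ are recoverable from the degree $0$ and $2$ parts of $P(g(\tilde v(\CL))^n)$, which already live in the bounded-denominator lattice induced by $v(K^0_{\mathrm{top}}(X))$ in those low degrees. These two components determine $g(\tilde v(\CL))$ up to its $\beta$-coefficient, which is then pinned down by the isometry relation $\tbb(g(\tilde v(\CL)),g(\tilde v(\CL)))=\tbb(\tilde v(\CL),\tilde v(\CL))$. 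Running the same argument for the finitely many generating vectors of $\Lambda$ and taking a common denominator yields the integer $N$ and completes the proof.
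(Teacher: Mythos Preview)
Your overall strategy---use that $\DM(X)$ preserves the image of topological $K$-theory in $\SH(X,\BQ)$ to manufacture an invariant lattice in $\tH(X,\BQ)$---is exactly the idea the paper uses. But the paper carries it out in one line: since $\DM(X)$ acts faithfully on $\SH(X,\BQ)$ and preserves the full-rank lattice $v(K^0_{\mathrm{top}}(X))\cap\SH(X,\BQ)$, it is contained in an arithmetic subgroup of $\RO(\tH(X,\BQ))$, and any arithmetic subgroup stabilises some lattice. No explicit denominator chase is needed.

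Your hands-on extraction, by contrast, has a genuine gap. Write $w=g(\tilde v(\CL))=r\alpha+\mu+s\beta$. The degree~$0$ and degree~$2$ parts of $T(w^n/n!)$ are $r^n$ and $r^{n-1}\mu$ respectively, and these are integral because they equal the rank and first Chern class of an element of topological $K$-theory. From $r^n\in\BZ$ you correctly get $r\in\BZ$. But from $r^{n-1}\mu\in\h^2(X,\BZ)$ you can only conclude $\mu\in\tfrac{1}{r^{n-1}}\h^2(X,\BZ)$, and $r$ is \emph{not} bounded as $g$ ranges over $\DM(X)$: for a fixed vector $v$, the $\alpha$-coefficient of $g(v)$ can be arbitrarily large as $g$ varies over an infinite group of isometries. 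So ``recoverable'' is not the same as ``bounded denominator'', and the step ``uniform bounds on the denominators of $g(\tilde v(\CL))$'' does not follow. The subsequent use of the isometry relation to pin down $s$ inherits the same defect, since solving $b(\mu,\mu)-2rs=-2r_X$ for $s$ again introduces a factor of $r$ in the denominator.

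One could try to repair this by exploiting many line bundles at once, or by also reading off the top-degree pieces of $T(w^n)$, but this becomes substantially more intricate than the lemma warrants. The clean fix is to replace the explicit extraction by the abstract observation that preserving a lattice in one faithful rational representation of an algebraic group forces preservation of a lattice in every rational representation---which is precisely the arithmetic-group argument the paper gives.
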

\begin{proof}
	The group $\DM(X)$ has a natural and faithful action on $\SH(X,\BQ)$ via 
the embedding $\DM(X) \subset \RO(\tH(X,\BQ))$. Moreover, $\DM(X)$ preserves the integral lattice $v(K_{\textup{top}}(X)))\cap \SH(X,\BQ) \subset \SH(X,\BQ)$ in this representation. Therefore it is contained in an arithmetic subgroup of $\RO(\tH(X,\BQ))$. 
\end{proof}

We want to classify lattices $\Gamma$ with the property $\DM(X)\subset \RO(\Gamma)$. 
We know by Proposition~\ref{prop:first_incl_DM} that for any such lattice 
$\Gamma$ there is an inclusion $\hat{\RO}^+(\Lambda)\subset \RO(\Gamma)$. 
This yields strong restrictions. 
\begin{lem}
\label{lem:form_of_lattice}
	Let $\tilde{\Gamma}$ be a lattice preserved by $\DM(X)$ as in Lemma~\ref{lem:exist_lattice}. Up to replacing $\tilde{\Gamma}$ by $a\tilde{\Gamma} 
\subset \tH(X,\BQ)$ for $a\in \BQ$ the lattice $\tilde{\Gamma}$ is equal (as subsets) to $k \Lambda_S \oplus \BZ \tde \subset \tH(X,\BQ)$ for some $k\in \BZ$ satisfying $k|(2n-2)$. 
\end{lem}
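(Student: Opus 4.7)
The plan is to classify rank-$25$ lattices $\tilde{\Gamma} \subset \tH(X,\BQ)$ preserved by $\hat{\RO}^+(\Lambda)$, which is enough because Proposition~\ref{prop:first_incl_DM} gives $\hat{\RO}^+(\Lambda) \subset \DM(X) \subset \RO(\tilde{\Gamma})$. The first step is to use the embedding $\RO^+(\Lambda_S) \hookrightarrow \hat{\RO}^+(\Lambda)$ obtained by extending any isometry by the identity on $\BZ\tde$; this subgroup preserves the orthogonal splitting $\tH(X,\BQ) = (\Lambda_S \otimes \BQ) \oplus \BQ\tde$ and acts irreducibly on the first factor. Since $\Lambda_S$ is the K3 Mukai lattice (even unimodular of signature $(4,20)$), a short orbit argument with $-2$-reflections shows that every $\RO^+(\Lambda_S)$-invariant lattice in $\Lambda_S \otimes \BQ$ is a rational multiple of $\Lambda_S$: if a primitive $x$ lies in such a lattice $M$, then $\tbb(x,v) = 1$ for some $-2$-vector $v$, so $v = s_v(x) - x \in M$, and such $v$ generate $\Lambda_S$. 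Applied to $\tilde{\Gamma}_S \coloneqq \tilde{\Gamma} \cap (\Lambda_S\otimes\BQ)$, which has full rank $24$, this yields $\tilde{\Gamma}_S = k'\Lambda_S$ for some $k' \in \BQ_{>0}$. The projection of $\tilde{\Gamma}$ to $\BQ\tde$ is a discrete rank-one subgroup $\BZ c\tde$ with $c \in \BQ_{>0}$, and choosing a lift $\tilde x = y + c\tde$ of a generator presents $\tilde{\Gamma} = k'\Lambda_S \oplus \BZ\tilde x$ with $y \in \Lambda_S \otimes \BQ$.

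The key step is to show that $y$ can be absorbed into $k'\Lambda_S$ by using reflections which mix the two factors. For any primitive $v_S \in \Lambda_S$ with $v_S^2 = 2n-4$, the vector $v \coloneqq v_S + \tde$ has $v^2 = -2$ and divisibility one in $\Lambda$, so $s_v \in \hat{\RO}^+(\Lambda)$. Setting $A \coloneqq \tbb(y,v_S) + c(2-2n)$, one computes $s_v(\tilde x) = \tilde x + A(v_S + \tde)$; matching the $\tde$-coefficient of this against $\BZ\tilde x + k'\Lambda_S$ forces $\tbb(y,v_S)/c \in \BZ$ for every such $v_S$, and because primitive square-$(2n-4)$ vectors generate $\Lambda_S$ this gives $y \in c\Lambda_S$, say $y = cy_0$ with $y_0 \in \Lambda_S$. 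The $\Lambda_S$-part of the same identity then reads $(\tbb(y_0,v_S) + 2-2n)(v_S - y_0) \in k\Lambda_S$ with $k \coloneqq k'/c$. Specializing to $v_S$ with $\tbb(y_0,v_S) = 0$ and $v_S - y_0$ primitive in $\Lambda_S$ yields $k \mid (2n-2)$; letting $v_S$ vary among such vectors and using that their pairwise differences span $\Lambda_S$ forces $k \mid \mathrm{div}(y_0)$, hence $y_0 \in k\Lambda_S$. Subtracting the corresponding element of $k'\Lambda_S$ replaces $\tilde x$ by $c\tde$, so $\tilde{\Gamma} = k'\Lambda_S \oplus \BZ c\tde$.

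Rescaling by $a \coloneqq 1/c$ then gives $a\tilde{\Gamma} = k\Lambda_S \oplus \BZ\tde$, and it remains to check $k \in \BZ$. For this I would invoke the Eichler transvection $B_\delta = t(-\beta,\tde) \in \hat{\RO}^+(\Lambda)$ from the proof of Proposition~\ref{prop:first_incl_DM}: the explicit computation $B_\delta(k\tal) = k\tal - k(n-1)\beta + k\tde$ forces the $\tde$-coefficient to lie in $\BZ$. The main obstacle is the middle paragraph: showing that the twist $y$ can always be absorbed requires carefully combining several reflection invariances and exploiting the abundance of primitive square-$(2n-4)$ vectors in the K3 Mukai lattice $\Lambda_S$ to make the resulting Diophantine constraints tight enough; all other steps reduce to direct computations with the explicit generators of $\hat{\RO}^+(\Lambda)$ already at hand.
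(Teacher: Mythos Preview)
Your overall strategy is sound and the first and third paragraphs are essentially correct, but there is a genuine gap in the middle paragraph. You claim that the pairwise differences of primitive square-$(2n-4)$ vectors $v_S$ orthogonal to $y_0$ span $\Lambda_S$; this is false, since any such difference lies in $y_0^\perp$, so you can at best generate $y_0^\perp$. Consequently the deduction $k\mid \mathrm{div}(y_0)$ does not follow from the constraint $(2-2n)(v_S-y_0)\in k\Lambda_S$ you isolate --- once you know $k\mid(2n-2)$ that constraint is vacuous.

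The gap is easy to close, and in fact your detour through the mixing reflections $s_{v_S+\tde}$ is unnecessary for absorbing the twist. Just reuse the $-2$-reflections $s_v$ with $v\in\Lambda_S$ that you already invoked in the first paragraph: since $s_v$ fixes $\tde$, one has $s_v(\tilde x)-\tilde x = s_v(y)-y=\tbb(y,v)\,v$, which lies in $\tilde\Gamma\cap(\Lambda_S\otimes\BQ)=k'\Lambda_S$. As $-2$-vectors span the unimodular lattice $\Lambda_S$, this gives $y\in k'\Lambda_S$ directly, with no intermediate step $y=cy_0$. After that, your use of $B_\delta(k\tal)$ to force $k\in\BZ$ is fine, and applying $B_\delta$ to $\tde$ instead (namely $B_\delta(\tde)=\tde+(2-2n)\beta$) yields $k\mid(2n-2)$ with no further work.

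By comparison, the paper avoids the twist $y$ altogether by first rescaling so that $\tilde\Gamma\subset\Lambda$ with $a$ minimal. Then every element already decomposes as $x+b\tde$ with $x\in\Lambda_S$ and $b\in\BZ$; defining $k$ as the minimal content of such an $x$ and using transitivity of $\RO^+(\Lambda_S)$ on primitive vectors of fixed square gives $k\Lambda_S\subset\tilde\Gamma$, whence $\tilde\Gamma=k\Lambda_S\oplus s\BZ\tde$. Minimality of $a$ and the computation $B_\delta(k\tal)=k\tal+k\tde+k(1-n)\beta$ force $s=1$, and $B_\delta(\tde)=\tde+(2-2n)\beta$ gives $k\mid(2n-2)$. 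This ordering sidesteps both the rational $k$ and the absorption argument that tripped you up.
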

\begin{proof}
	Let us replace $\tilde{\Gamma}$ with $a\tilde{\Gamma}$ for $a\in \BQ_{>0}$ such that $\tilde{\Gamma} \subset \Lambda$ and $a$ is the smallest positive rational number with that property.
	
	Consider $v\in \tilde{\Gamma}$ and write $v=x+b\tde$ with $x\in \Lambda_S$ and $b\in \BZ$. If $x \neq 0$, then its divisibility agrees with the largest integer $t\in \BZ_{>0}$ such that $x \in t\Lambda_S$, since $\Lambda_S$ is unimodular. Consider now all $v\in \tilde{\Gamma}$ such that in the above decomposition $x\neq 0$ and let $k$ be the minimum of all integers $t$ as above. Then $k\Lambda_S \subset \tilde{\Gamma}$.
	
	Indeed, take an element $v\in \tilde{\Gamma}$ such that $v= kx + c\tde$ for some $c\in \BZ$ and $x\in \Lambda_S$ is primitive. One immediately sees that $\RO^+(\Lambda_S)$ can be embedded into $\hat{\RO}^+(\Lambda)$ as the group of all isometries fixing $\tde$. Using \cite[Prop.\ 3.3]{GHS_Pi1} we see that for every primitive $y\in \Lambda_S$ with $\tbb(y,y)=\tbb(x,x)$ the element $ky+c\tde$ is contained in $\tilde{\Gamma}$. This yields $k\Lambda_S \subset \tilde{\Gamma}$. 
	
	Consider $(k\Lambda_S)^\perp \subset \tilde{\Gamma}$ and take the positive integer $s\in \BZ$ such that $(k\Lambda_S)^\perp = s\BZ\tde \subset \tilde{\Gamma}$. We claim $\tilde{\Gamma} = k\Lambda_S \oplus s\BZ\tde$. For this take an arbitrary $v\in \tilde{\Gamma}$ and write $v=dx+e\tde$ for $d,e\in \BZ$. The definition of the integer $k$ implies that $k$ 
divides $d$ and by the above we therefore have that $dx\in k\Lambda_S \subset \tilde{\Gamma}$. Hence $v-dx=e\tde$ is an element of $\tilde{\Gamma}$ orthogonal to $k\Lambda_S$. By definition of the integer $s$ we have that $s$ divides $e$ and so $v\in k\Lambda_S\oplus s\BZ \tde$. 
	
	The minimality assumption of $a$ yields that the integers $k$ and $s$ do 
not have a common divisor. On the other hand, we know that $k\tal\in \tilde{\Gamma}$ and $B_{\delta}(k\tal) = k\tal +k\tde +k(1-n)\beta$. This implies that $k\tde\in \tilde{\Gamma}$ and therefore $s=1$. 
	Finally, one sees that $B_{\delta}(\tde)=\tde + (2-2n)\beta$ which finishes the proof.
\end{proof}

\begin{rmk}
Ideally, one would like to conclude in the above situation directly that $k=1$ and therefore (up to scaling) $\tilde{\Gamma}$ must equal $\Lambda$. However, this is in general not true. 

For example, let us consider the case of $\mathrm{K3}^{[10]}$-type hyper-Kähler manifolds. The lemma below implies that for the lattice $\tilde{\Gamma}=3\Lambda_S \oplus \BZ\tde \subset \tH(X,\BQ)$ there is an inclusion $\RO(\Lambda)\subset \RO(\tilde{\Gamma})$. Moreover, the isometry 
$B_{\delta/3}$ lies in $\RO(\tilde{\Gamma})$ but not in $\RO(\Lambda)$. Therefore additional (geometric) input is necessary for the proof of Theorem~\ref{thm:derived_mon_grp_K3n}. 
\end{rmk}

We make some further reductions. 
\begin{lem}
\label{lem:lattice_reduction_lsquared}
	Let $l\in \BZ_{>0}$ be the largest integer such that $l^2|(n-1)$. For every lattice $\tilde{\Gamma}$ as in Lemma~\ref{lem:form_of_lattice} there is an inclusion
	\[
	\RO(\tilde{\Gamma}) \subset \RO(\Gamma)
	\]
	with $\Gamma\coloneqq l\Lambda_S\oplus \BZ \tde\subset \tH(X,\BQ)$.
\end{lem}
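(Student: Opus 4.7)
The plan is, given $\gamma \in \RO(\tilde\Gamma)$, to verify $\gamma(\Gamma) = \Gamma$ by checking generators. Since $\Gamma = l\Lambda_S \oplus \BZ\tde$ is generated as an abelian group by $\tde$ together with the elements $\{ly : y \in \Lambda_S\}$, it suffices to establish (a)~$\gamma(\tde) \in \Gamma$ and (b)~$\gamma(ly) \in \Gamma$ for every $y \in \Lambda_S$. Writing $n-1 = l^2 f$, the maximality of $l$ forces $f$ to be squarefree, so $\nu_p(f) \in \{0,1\}$ at every prime; the hypothesis $k \mid 2(n-1)$ reads $k \mid 2 l^2 f$.

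For (a), I would use that $\tde$ is primitive in $\tilde\Gamma$ with divisibility $2(n-1) = 2 l^2 f$ in $\tilde\Gamma^\vee$, both properties being preserved by $\gamma$. Writing $\gamma(\tde) = kz + d\tde$ with $z \in \Lambda_S$ and $d \in \BZ$, and letting $e'$ denote the content of $z$ (the gcd of its coordinates in any basis of $\Lambda_S$), preservation of the divisibility forces $2l^2 f \mid k^2 e'$. Taking $p$-adic valuations and using $k \mid 2 l^2 f$ yields at every prime $p$ the inequality $\nu_p(e') \geq \nu_p(2) + 2\nu_p(l) + \nu_p(f) - 2\nu_p(k)$, and this comfortably exceeds $\nu_p(l) - \nu_p(k)$ whenever the latter is positive. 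Hence $l \mid k e'$, i.e., $kz \in l\Lambda_S$, and so $\gamma(\tde) \in l\Lambda_S \oplus \BZ\tde = \Gamma$.

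For (b), fix a basis $e_1,\ldots,e_{24}$ of $\Lambda_S$ and write $\gamma(k e_i) = k y_i' + a_i \tde$ with $y_i' \in \Lambda_S$ and $a_i \in \BZ$. Then $\gamma(l e_i) = l y_i' + (la_i/k)\tde$, so the condition $\gamma(l e_i) \in \Gamma$ is equivalent to $k/\gcd(k,l) \mid a_i$. Preservation of norms gives
\[
k^2 \bigl(\tbb(y_i', y_i') - \tbb(e_i, e_i)\bigr) = 2 l^2 f \, a_i^2,
\]
and because $\Lambda_S$ is an even lattice the left-hand side lies in $2 k^2 \BZ$, which forces $k^2 \mid l^2 f\, a_i^2$. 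A prime-by-prime check, using $\nu_p(f) \in \{0,1\}$, then yields $\nu_p(a_i) \geq \max(0, \nu_p(k) - \nu_p(l))$, which is exactly $\nu_p(k/\gcd(k,l))$. Squarefreeness of $f$ is essential here, since any $p^2$-factor in $f$ would cost one $p$-adic unit in the rounding.

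The delicate step is the sharpness of (b) at $p = 2$ when $\nu_2(f) = 1$ and $\nu_2(k) - \nu_2(l) = 1$: there the crude divisibility $k^2 \mid l^2 f\,a_i^2$ is tight, and it is precisely the extra factor of $2$ supplied by $\Lambda_S$ being an \emph{even} lattice (rather than merely integral) that closes the argument. Applying the same reasoning to $\gamma^{-1}$ upgrades the inclusions to equalities, and hence $\gamma \in \RO(\Gamma)$.
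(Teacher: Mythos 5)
Your proof is correct. It uses the same basic inputs as the paper's argument --- evenness of $\Lambda_S$, preservation of the divisibility of $\tde$, and the maximality of $l$ (equivalently, squarefreeness of $f=(n-1)/l^2$) --- but organizes them quite differently. The paper factors the inclusion through the intermediate lattice $\Gamma_t=t\Lambda_S\oplus\BZ\tde$ with $t=\gcd(k,l)$ and proves $\RO(\tilde{\Gamma})\subset\RO(\Gamma_t)\subset\RO(\Gamma)$; there the containment $\tilde{\Gamma}\subset\Gamma_t$ makes the image of $\tde$ automatic in the first step, and the evenness argument is combined with the assertion that $l$ and $k/t$ are coprime. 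You go directly, which means you must treat $\gamma(\tde)$ by hand since $\tilde{\Gamma}\not\subset\Gamma$ in general; your divisibility-of-$\tde$ argument does this correctly (and the degenerate case $z=0$ is forced to $d=\pm1$ by norm preservation). What your route buys: the prime-by-prime estimate $k^2\mid l^2fa_i^2\Rightarrow \nu_p(a_i)\ge\max(0,\nu_p(k)-\nu_p(l))$ only needs $\nu_p(f)\le 1$, and so sidesteps the coprimality of $l$ and $k/\gcd(k,l)$, which can in fact fail (e.g.\ $n-1=16$ and $k=8$ give $l=4$ and $k/t=2$); in that sense your valuation-theoretic version is the more robust of the two. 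Your isolation of where evenness is genuinely indispensable (the borderline case $p=2$ with $\nu_2(f)=1$) is also accurate.
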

\begin{proof}
	Write $\tilde{\Gamma} = k\Lambda_S \oplus \BZ \tde$ with $k|(2n-2)$. Let $t$ be the greatest common divisor of $l$ and $k$ and denote by $\Gamma_t$ the lattice $t\Lambda_S \oplus \BZ \tde \subset \tH(X,\BQ)$. The proof consists of showing the following two inclusions
	\[
	\RO(\tilde{\Gamma}) \subset \RO(\Gamma_t)\subset \RO(\Gamma).
	\]
	
	Let us prove the first inclusion. Take an isometry $\gamma \in \RO(\tilde{\Gamma})$ and write $k=k't$. Since $\gamma(\tde) \in \tilde{\Gamma} \subset \Gamma_t$ as subsets of the extended Mukai lattice it suffices to show that for every $\lambda \in \Lambda_S$ we have $\gamma(t\lambda)\in \Gamma_t$. By definition we have $\gamma(k\lambda)\in \tilde{\Gamma}$. Therefore we can write $\gamma(k\lambda)= a k\mu +b\tde$ for some integers $a$ and $b$ and $\mu \in \Lambda_S$. Dividing this equation by $k'$ we obtain
	\[
	\gamma(t\lambda)= at\mu + \frac{b}{k'}\tde.
	\]
	As the self-pairing of $t\lambda$ is an even integer, the same must hold 
true for $\gamma(t\lambda)$. In particular, we find that
	\[
	2b^2 \frac{1-n}{k'^2}\in 2\BZ. 
	\]
	The defining property of $l$ together with the fact that $l$ and $k'$ are coprime implies that $k'$ must divide $b$. This gives the first inclusion.
	
	For the second inclusion consider an isometry $\gamma \in \RO(\Gamma_t)$ 
and observe that for every $\lambda \in \Lambda_S$ we have $t\gamma(\lambda)\in \Gamma_t=t\Lambda_S \oplus \BZ \tde$. This yields 
	\[
	l\gamma(\lambda)=(l/t)t\gamma(\lambda)\in l\Lambda_S \oplus \BZ\tde = 
\Gamma.
	\]
	It is left to show that $\gamma(\tde)\in \Gamma$. This follows immediately from the fact that $\tde$ as an element in the lattice $\Gamma_t$ has divisibility $2n-2$.
\end{proof}
We therefore have an upper bound for the lattice from Lemma~\ref{lem:exist_lattice}, i.e.\ for $\Gamma=l\Lambda_S \oplus \BZ \tde$ as above we have $\DM(X)\subset \RO(\Gamma)$. 
In particular, if $n-1$ is square-free, then we have already obtained $\DM(X)\subset \RO(\Lambda)$. 
\subsection{Conclusion of proof}
\begin{proof}[Proof of Theorem~\ref{thm:derived_mon_grp_K3n}]
	From Lemma~\ref{lem:lattice_reduction_lsquared} we know that for the lattice $\Gamma=l\Lambda_S \oplus \BZ\tde$ with $l$ maximal such that $l^2|(n-1)$ there is an inclusion $\DM(X)\subset \RO(\Gamma)$. 
	
	Suppose there exists an isometry $\gamma \in \DM(X)$ which does not lie in $\RO(\Lambda)$. Consider the composition \[
	\varphi\colon \Lambda_S \xrightarrow{\gamma} \tH(X,\BQ)\xrightarrow{p} \BQ \tde
	\]
	where $p$ is the orthogonal projection and denote $K=\Ker(\varphi)$. Let $v$ be a generator of $K^{\perp}\subset \Lambda_S$ and let us write $\frac{k}{l}\tde$ for its image under $\gamma$. By assumption $\frac{k}{l}$ is not an integer. 
	Note that there are two hyperbolic planes $U_1 \oplus U_2$ contained in $K$. 
	
	Indeed, since $\RO^+(\Lambda_S)$ acts transitively on primitive elements 
with the same square \cite[Prop.\ 3.3]{GHS_Pi1}, one can send $v$ into a hyperbolic plane $U\subset U^4 \oplus E_8(-1)^2 \cong \Lambda_S$. Since $K=v^{\perp}$ we know there are two (in fact at least three) hyperbolic planes contained in $K$.
	
	Changing $v$ to $w$ by adding an element of $U_1$ we can assume that $\tbb(w,w)=-2$ and the image of $w$ generates the image of $\varphi$. Moreover, we know that there is a primitive isotropic element $z\in U_2$ which is orthogonal to $w$ and which under $\gamma$ is mapped to a primitive element $u \in \Lambda_S$.
	
	Let us write 
	\[
	\gamma(w)=x+\frac{k}{l}\tde
	\]
	with $x\in \Lambda_S$ and
	\begin{equation}
	\label{eq:aabse}
	\gamma(\tde)=\frac{2n-2}{l}y + s \tde
	\end{equation}
	for $y\in \Lambda_S$ and some $s\in \BZ$, because $\tde\in \Gamma$ has divisibility $2n-2$. 
	Recall $u=\gamma(z)$ and note that there exists $a\in \BZ$ such that
	\[
	x'=x+\frac{n-1}{l}y+au \in \Lambda_S
	\]
	is primitive since $u\in \Lambda_S$ is itself primitive. We define the element $w'=w + az\in \Lambda_S$ which is still primitive, has self-pairing $-2$ and its image generates the image of $\varphi$.
	
	The group $\RO^+(\Lambda_S)$ can be included into $\hat{\RO}^+(\Lambda) \subset \DM(X)$ by letting isometries act trivially on $\tde$. Hence, there exists an element $\gamma'\in \hat{\RO}^+(\Lambda) $ which maps $\tal + \beta$ to $w'\in \Lambda_S$ and fixes $\tde$, since $\RO^+(\Lambda_S)$ acts transitively on the set of primitive vectors with prescribed self-pairing. 
	Furthermore, there exists an isometry $\gamma''\in \hat{\RO}^+(\Lambda)$ 
such that the primitive element $x'$ is mapped to $\tal +b\beta$ for some 
$b\in \BZ$ and $\tde$ to itself. Precomposing $\gamma$ with $\gamma'$ and 
postcomposing with $\gamma''$ we therefore obtain an isometry in $\DM(X)$ 
which satisfies
	\[
	\tilde{v}(\CO_X)=\tal + \frac{\tde}{2} + \beta \mapsto h \coloneqq \tal + \left( \frac{s}{2}+\frac{k}{l} \right) \tde + b\beta=\alpha + \left(\frac{s-1}{2}+\frac{k}{l}\right)\delta + c\beta
	\]
	for some $c\in \BQ$. 
	
	The extended Mukai vector of $\CO_X$ satisfies
	\[
	T\left(\frac{\tilde{v}(\CO_X)^n}{n!}\right) = \overline{v(\CO_X)} \in \SH(X,\BQ) \subset \h^{\ast}(X,\BQ)
	\]
	which is in particular an element in the image of the Mukai vector morphism
	\[
	\overline{v}=\overline{\ch(\_)\tdd}\colon K^0_{\textup{top}}(X)\to \SH(X,\BQ) \subset \h^{\ast}(X,\BQ)
	\]
	projected to $\SH(X,\BQ)$. Since parallel transport operators as well as 
derived equivalences preserve the image of topological $K$-theory under the Mukai vector morphism in cohomology, the same must hold true for $\gamma''\circ \gamma \circ \gamma'$, so in particular
	\[
	T\left(\frac{h^n}{n!}\right) \in \overline{v(K^0_{\textup{top}}(X))}.
	\]
	Let us write
	\[
	T\left(\frac{h^n}{n!}\right) = \One + \left(\frac{s-1}{2}+\frac{k}{l}\right)\delta + \mu
	\]
	with $\mu \in \SH^{>2}(X,\BQ)$. Applying the quadratic form to the equality \eqref{eq:aabse} we see that $s$ must be odd and therefore $(s-1)/2$ is an integer. Note that the degree 2 component of the image of elements of topological $K$-theory under the Mukai vector morphism always lies inside $\h^2(X,\BZ)$. This yields a contradiction and finishes the proof. 
\end{proof}

\begin{cor}
\label{conj:der_inv_geom_lattice}
	There is an inclusion
	\[
	\DM(X)\subset \RO(\Lambda_g).
	\]
\end{cor}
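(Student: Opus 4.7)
The plan is to deduce this from Theorem~\ref{thm:derived_mon_grp_K3n} by a short discriminant computation showing $\RO(\Lambda) \subset \RO(\Lambda_g)$. Since $\DM(X) \subset \RO(\Lambda)$, the corollary then follows; the key point is to recognise $\tde/2$ as a canonically distinguished element in the discriminant group of $\Lambda$, so that any lattice automorphism fixes its class automatically.

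First I would exploit that $\Lambda = \Lambda_S \oplus \BZ\tde$ is an orthogonal decomposition with $\Lambda_S$ unimodular and $\tbb(\tde,\tde) = -2(n-1)$. This identifies the discriminant group $\Lambda^{\vee}/\Lambda$ with a cyclic group of order $2(n-1)$, generated by the class of $\tde/(2n-2)$. A direct check shows $\tde/2 \in \Lambda^{\vee}$: its pairings with $\Lambda_S$ vanish, while $\tbb(\tde/2,\tde) = -(n-1) \in \BZ$. Its class in $\Lambda^{\vee}/\Lambda$ equals $(n-1)\cdot[\tde/(2n-2)]$, i.e.\ the unique element of order two in the cyclic group $\Lambda^{\vee}/\Lambda$.

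Next I would observe that every $g \in \RO(\Lambda)$ induces a group automorphism of $\Lambda^{\vee}/\Lambda$. Since this group is finite cyclic of even order $2(n-1)$, any such automorphism acts by multiplication by a unit modulo $2(n-1)$, hence by an odd number, and therefore fixes the unique order-two element $n-1$. It follows that $g(\tde/2) - \tde/2 \in \Lambda$, so $g(\tde/2) \in \tde/2 + \Lambda \subset \Lambda_g$. Combined with $g(\Lambda_S) \subset g(\Lambda) = \Lambda \subset \Lambda_g$, this gives $g(\Lambda_g) \subset \Lambda_g$, and running the same reasoning for $g^{-1}$ yields the equality $g(\Lambda_g) = \Lambda_g$. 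Invoking Theorem~\ref{thm:derived_mon_grp_K3n} then finishes the proof.

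There is no real main obstacle here: the argument is purely formal lattice theory once the two observations that $\Lambda^{\vee}/\Lambda$ is cyclic and that $[\tde/2]$ represents its unique order-two class are in place. The only step where one must be slightly attentive is verifying the orthogonality and computing $\tbb(\tde,\tde) = -2(n-1)$, which is immediate from the formulas $\tde = \delta + (n-1)\beta$ and $b(\delta,\delta) = -2(n-1)$ fixed in Section~\ref{subsec:lattices}.
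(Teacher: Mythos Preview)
Your proof is correct and proves the slightly stronger statement $\RO(\Lambda)\subset\RO(\Lambda_g)$, from which the corollary follows via Theorem~\ref{thm:derived_mon_grp_K3n}. The paper's argument is at heart the same but phrased more directly: instead of passing through the discriminant group, it uses that $\tde\in\Lambda$ has divisibility $2n-2$, so any $\gamma\in\RO(\Lambda)$ satisfies $\gamma(\tde)=(2n-2)x+s\tde$ with $x\in\Lambda_S$ and $s\in\BZ$, whence $\gamma(\tde/2)=(n-1)x+s\tde/2\in\Lambda_g$. Your observation that $[\tde/2]$ is the unique order-two class in $\Lambda^{\vee}/\Lambda\cong\BZ/(2n-2)\BZ$ and hence fixed by every induced automorphism is the conceptual explanation behind that divisibility computation; the two approaches are equivalent, yours being a bit more structural and the paper's a line shorter.
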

\begin{proof}
	Take $\gamma\in \DM(X)$ and recall that
	\[
	\Lambda \subset \Lambda_g = \Lambda_S \oplus \BZ \frac{\tde}{2}.
	\]
	The inclusion $\DM(X)\subset \RO(\Lambda)$ yields that every element of $\Lambda_S$ is mapped under $\gamma$ again to $\Lambda \subset \Lambda_g$. Moreover,
	\[
	\gamma(\tde)=(2n-2)x+s\tde
	\] 
	for some $s\in \BZ$ and $x\in \Lambda_S$, because $\tde\in \Lambda$ has divisibility $2n-2$. This implies $\gamma(\tde/2) \in\Lambda_g$.
\end{proof}
\section{Derived equivalences of $\Kdrein$-type hyper-Kähler manifolds}
\label{sec:conclusion_K3nHK_Db}
We will draw some consequences from the results of the previous sections. 

\subsection{General results}
Let $X$ be a projective hyper-Kähler manifold of $\Kdrein$-type. We denote by $\Lambda_X$ the Hodge structure obtained from the inclusion $\Lambda\subset \tH(X,\BQ)$ and by $\Aut(\Lambda_X)$ the group of all Hodge isometries of $\Lambda_X$. Recall the representation
\[
\rho^{\tH} \colon \Aut(\Db(X)) \to \mathrm{O}(\tH(X,\BQ))
\]
from Section~\ref{subsec:prelim_der_equi_tael}. 
\begin{cor}
\label{cor:Representation_autoeq_K3n_Lattice}
	The representation $\rho^{\tH}$ of the group of auto-equivalences $\Aut(\Db(X))$ factors via a representation
	\[
	\rho^{\tH}\colon \Aut(\Db(X))\to \Aut(\Lambda_X) \subset \mathrm{O}(\tH(X,\BQ)).
	\]
\end{cor}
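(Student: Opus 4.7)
The plan is to derive the statement as an essentially immediate consequence of Theorem~\ref{thm:derived_mon_grp_K3n} combined with the Hodge-theoretic content of Taelman's results recalled in Section~\ref{subsec:prelim_der_equi_tael}. The factorisation asserts two things simultaneously: that $\Phi^{\tH}$ preserves the lattice $\Lambda$ inside $\tH(X,\BQ)$, and that its restriction to $\Lambda$ is compatible with the induced Hodge structure $\Lambda_X$. These two features have already been established separately, so the work is just to splice them.

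First, given $\Phi \in \Aut(\Db(X))$, I would observe that $\Phi^{\tH}$ is an element of the derived monodromy group $\DM(X)$. This follows directly from Definition~\ref{defn:derived_mon_grp} by taking $X_1 = X_2 = X$ together with the trivial one-parameter family $\CX_i = X \times B_i \to B_i$ and the constant path, so that both parallel transport operators $\gamma_1,\gamma_2$ are the identity and the corresponding element of $\DM(X)$ is precisely $\Phi^{\tH}$.

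Second, the inclusion $\DM(X) \subset \RO(\Lambda)$ supplied by Theorem~\ref{thm:derived_mon_grp_K3n} gives $\Phi^{\tH}(\Lambda) = \Lambda$ as subsets of $\tH(X,\BQ)$. Independently, Taelman's Theorem~\ref{thm:TaelmanLie_coh_compatible} and the commutative diagram \eqref{diag:main_diag_sec2} ensure that $\Phi^{\tH}$ is a Hodge isometry of $\tH(X,\BQ)$ (possibly up to a global sign when $n$ is even and $b_2(X)$ is odd, which is harmless since $\pm\id$ respects both $\Lambda$ and the Hodge decomposition). Because $\Lambda_X$ is by definition equipped with the Hodge structure inherited from the embedding $\Lambda \hookrightarrow \tH(X,\BQ)$, the restriction $\Phi^{\tH}|_{\Lambda}$ lies in $\Aut(\Lambda_X)$, producing the desired factorisation.

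There is no real obstacle to overcome: the heavy lifting has been done in Theorem~\ref{thm:derived_mon_grp_K3n}. The only point warranting explicit mention is the sign ambiguity in the even-$n$ case, which I would dispose of in a single sentence by noting that $\Lambda$ and the Hodge decomposition on $\Lambda_X$ are both stable under $-\id$, so that any of the two isometries in $\RO(\tH(X,\BQ))$ associated to $\Phi^{\SH}$ factors through $\Aut(\Lambda_X)$.
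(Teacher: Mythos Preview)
Your proposal is correct and matches the paper's approach: the corollary is stated without proof in the paper, being regarded as an immediate consequence of Theorem~\ref{thm:derived_mon_grp_K3n} together with the Hodge-isometry property of $\Phi^{\tH}$ from Section~\ref{subsec:prelim_der_equi_tael}. Your observation that any $\Phi \in \Aut(\Db(X))$ yields an element of $\DM(X)$ via trivial parallel transport is exactly the intended mechanism; the only minor comment is that your caveat about the sign in the even-$n$ case is unnecessary, since by construction $\Phi^{\tH}$ is already a genuine Hodge isometry of $\tH(X,\BQ)$ (the sign $\epsilon(\Phi^{\tH})$ concerns the compatibility with $\Phi^{\SH}$, not the Hodge property itself).
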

One can also formulate the following more general version of the above statement.
\begin{thm}
\label{prop:rel_version_hodge_isom_lattice}
	Let $X$ and $Y$ be projective $\Kdrein$-type hyper-Kähler manifolds 
and $\Phi\colon \Db(X)\cong \Db(Y)$ a derived equivalence. Then $\Phi^{\tH}$ restricts to a Hodge isometry
	\[
	\Phi^{\tH} \colon \Lambda_X\cong \Lambda_Y.
	\]
\end{thm}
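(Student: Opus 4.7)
The plan is to reduce Theorem~\ref{prop:rel_version_hodge_isom_lattice} directly to the absolute statement Theorem~\ref{thm:derived_mon_grp_K3n}. Since $X$ and $Y$ are both of $\Kdrein$-type, they are deformation equivalent. Fix a reference $\Kdrein$-type hyper-Kähler manifold $X_0 = S^{[n]}$ for some projective K3 surface $S$, together with parallel transport operators connecting $X$ and $Y$ to $X_0$ through smooth families of hyper-Kähler manifolds. By \cite[Prop.\ 4.1]{TaelmanDerHKLL}, as recalled in Section~\ref{subsec:prelim_der_equi_tael}, each such parallel transport lifts (uniquely up to sign) to an isometry $\gamma^{\tH} \colon \tH(X,\BQ) \cong \tH(X_0,\BQ)$ of the extended Mukai lattice.

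First I would verify that parallel transport operators preserve the $\Kdrein$ lattice, i.e.\ that $\gamma_X^{\tH}(\Lambda_X) = \Lambda_{X_0}$ and similarly for $Y$. The normalization $\psi(\One) = \alpha^n/n!$, combined with the compatibility of $\gamma^{\tH}$ with the $\Fg$-module structure and the deformation-invariance of $c_X$, forces $\gamma^{\tH}(\alpha) = \alpha$ and $\gamma^{\tH}(\beta) = \beta$, while the restriction to $\h^2$ yields an isometry of integral lattices $\h^2(X,\BZ) \cong \h^2(X_0,\BZ)$. Hence $\gamma^{\tH}(\tH(X,\BZ)) = \tH(X_0,\BZ)$ on the nose, and $\gamma^{\tH}(\delta_X)$ is again a primitive class of square $2-2n$ and divisibility $2n-2$ in $\h^2(X_0,\BZ)$. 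Since the definition of $\Lambda$ in Section~\ref{subsec:lattices} was observed to be independent of the chosen such class, we conclude $\gamma^{\tH}(\Lambda_X) = \Lambda_{X_0}$.

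The composition $g \coloneqq \gamma_Y^{\tH} \circ \Phi^{\tH} \circ (\gamma_X^{\tH})^{-1}$ then lies by construction in the derived monodromy group $\DM(X_0) \subset \RO(\tH(X_0,\BQ))$. By Theorem~\ref{thm:derived_mon_grp_K3n}, $g$ preserves the subset $\Lambda_{X_0} \subset \tH(X_0,\BQ)$. Applying the step above in reverse for $Y$ gives
\[
\Phi^{\tH}(\Lambda_X) \;=\; (\gamma_Y^{\tH})^{-1}\bigl(g(\gamma_X^{\tH}(\Lambda_X))\bigr) \;=\; (\gamma_Y^{\tH})^{-1}(\Lambda_{X_0}) \;=\; \Lambda_Y.
\]
Since $\Phi^{\tH}$ is a Hodge isometry of the rational extended Mukai lattices by Section~\ref{subsec:prelim_der_equi_tael}, its restriction to $\Lambda_X$ is automatically a Hodge isometry onto $\Lambda_Y$.

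The hard part of this plan is the lattice-invariance of parallel transport, i.e.\ making sure that $\gamma^{\tH}$ really sends $\Lambda_X$ bijectively to $\Lambda_{X_0}$, with no rescaling hidden in the extension from $\SH$ to $\tH$. The rest is essentially bookkeeping: the sign ambiguity in $\Phi^{\tH}$ when $n$ is even with $b_2(X)$ odd is harmless, because $\Lambda$ is stable under negation, so both sign choices yield the same statement at the level of lattices and Hodge structures.
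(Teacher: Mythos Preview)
Your argument is correct and reaches the same conclusion, but the route differs from the paper's in one substantive way: you verify \emph{directly} that parallel transport preserves $\Lambda$, whereas the paper deduces this indirectly. The paper picks a single parallel transport $\gamma \colon \tH(Y,\BQ) \to \tH(X,\BQ)$, observes that $\Phi^{\tH}\circ\gamma \in \DM(Y)$ preserves $\Lambda$ by Theorem~\ref{thm:derived_mon_grp_K3n}, and then shows $\gamma(\Lambda)\subset\tH(X,\BQ)$ is itself $\DM(X)$-invariant (by conjugating generators of $\DM(X)$ through $\gamma$). It then appeals to the classification Lemma~\ref{lem:form_of_lattice} together with the fact that $\gamma$ is an isometry to force $\gamma(\Lambda)=\Lambda$ as subsets. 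Your approach instead exploits the special feature of parallel transport---that it preserves the cohomological grading---to pin down $\gamma^{\tH}$ explicitly: it fixes $\alpha$ and $\beta$ (up to a global sign) and restricts to the integral parallel transport on $\h^2$, whence $\gamma^{\tH}(\tH(X,\BZ))=\tH(X_0,\BZ)$ and the $\delta$-independence of $\Lambda$ finishes the job. This is more elementary (it does not need Lemma~\ref{lem:form_of_lattice}), at the cost of treating parallel transport as a special case rather than uniformly with derived equivalences.

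One small sharpening: your sentence ``the compatibility of $\gamma^{\tH}$ with the $\Fg$-module structure and the deformation-invariance of $c_X$ forces $\gamma^{\tH}(\alpha)=\alpha$'' would be cleaner if you named the actual mechanism. What you need is that parallel transport on $\SH$ commutes with the grading operator $h\in\Fg(X)$, so $\gamma^{\Fg}(h)=h$, and hence $\gamma^{\tH}$ preserves the $h$-eigenspaces $\BQ\alpha$, $\h^2$, $\BQ\beta$ of $\tH$; the normalisation $\psi(\One)=\alpha^n/n!$ then fixes the scalar on $\alpha$ (up to sign when $n$ is even), and the isometry condition pins down $\beta$ and identifies the restriction to $\h^2$ with $\pm\gamma^{\h^2}$. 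The deformation-invariance of $c_X$ is not really the operative input here.
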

\begin{proof}
	Since $X$ and $Y$ are deformation-equivalent, there exists a parallel transport isometry $\gamma \colon \tH(Y,\BQ)\cong \tH(X,\BQ)$. The composition $\Phi^{\tH}\circ \gamma$ lies in $\DM(Y)$ and, therefore, satisfies $\Phi^{\tH}\circ \gamma(\Lambda) = \Lambda$ by Theorem~\ref{thm:derived_mon_grp_K3n}. Now $\gamma(\Lambda)\subset \tH(X,\BQ)$ is a lattice invariant under $\DM(X)$.
	
	Indeed, let $\gamma_2 \circ F \circ \gamma_1$ be one of the generators of $\DM(X)$ as in Definition~\ref{defn:derived_mon_grp}. Then $\gamma^{-1} \circ \gamma_2 \circ F \circ \gamma_1 \circ \gamma \in \DM(Y)$, so Theorem~\ref{thm:derived_mon_grp_K3n} gives
	\[
	\gamma^{-1} \circ \gamma_2 \circ F \circ \gamma_1 \circ \gamma (\Lambda) = \Lambda
	\]
	which yields
	\[
	\gamma_2 \circ F \circ \gamma_1 (\gamma (\Lambda)) = \gamma(\Lambda).
	\]
	
	Using Lemma~\ref{lem:form_of_lattice} and that $\gamma$ is an isometry we see that the subset $\gamma(\Lambda)\subset\tH(X,\BQ)$ is equal to $\Lambda \subset \tH(X,\BQ)$. Combining everything yields the assertion.
\end{proof}
We can use Lemma~\ref{lem:transcendental_hodge_structure} to obtain the following form of Theorem~\ref{prop:rel_version_hodge_isom_lattice}.
\begin{cor}
\label{cor:Der_equi_yields_transcendental_isometry}
	Let $X,Y$ and $\Phi$ be as above. Then $\Phi^{\tH}$ restricts to a Hodge 
isometry
	\[
	\Phi^{\tH} \colon \h^2(X,\BZ)_{\textup{tr}} \cong \h^2(Y,\BZ)_{\textup{tr}}
	\]
	between the transcendental lattices of $X$ and $Y$.\qed
\end{cor}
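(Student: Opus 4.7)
The corollary is essentially a formal consequence of Theorem~\ref{prop:rel_version_hodge_isom_lattice} combined with Lemma~\ref{lem:transcendental_hodge_structure}, so my plan is quite short. The idea is to restrict the Hodge isometry $\Phi^{\tH}\colon \Lambda_X \cong \Lambda_Y$ furnished by Theorem~\ref{prop:rel_version_hodge_isom_lattice} to the transcendental parts on both sides, and then identify those transcendental parts with $\h^2(X,\BZ)_{\textup{tr}}$ and $\h^2(Y,\BZ)_{\textup{tr}}$ respectively via Lemma~\ref{lem:transcendental_hodge_structure}.

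In more detail: first I would invoke Theorem~\ref{prop:rel_version_hodge_isom_lattice} to get that $\Phi^{\tH}$ sends $\Lambda_X$ isomorphically onto $\Lambda_Y$ as Hodge isometric lattices. Since $\Phi^{\tH}$ is by definition a Hodge isometry on the ambient extended Mukai lattices, its restriction to $\Lambda_X$ sends the algebraic part $\Lambda_X \cap \tH^{1,1}(X,\BC)$ to $\Lambda_Y \cap \tH^{1,1}(Y,\BC)$, and hence preserves their orthogonal complements within $\Lambda$, that is, the transcendental parts $(\Lambda_X)_{\textup{tr}}$ and $(\Lambda_Y)_{\textup{tr}}$ in the sense of the definition given just before Lemma~\ref{lem:transcendental_hodge_structure}. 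Finally, Lemma~\ref{lem:transcendental_hodge_structure} identifies these transcendental parts with $\h^2(X,\BZ)_{\textup{tr}}$ and $\h^2(Y,\BZ)_{\textup{tr}}$ respectively, yielding the desired Hodge isometry.

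There is no real obstacle here — all the work has already been done in Theorem~\ref{prop:rel_version_hodge_isom_lattice} (which in turn rested on Theorem~\ref{thm:derived_mon_grp_K3n}) and in Lemma~\ref{lem:transcendental_hodge_structure}. The only minor point to verify carefully is that the notion of ``transcendental part'' behaves well under Hodge isometries between lattices sitting inside $\tH(X,\BQ)$ and $\tH(Y,\BQ)$, but this is immediate from the fact that $\Phi^{\tH}$ by construction respects the $(p,q)$-decompositions on both sides.
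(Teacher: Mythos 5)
Your proposal is correct and matches the paper's (implicit) argument exactly: the corollary is stated with an immediate \qed precisely because it follows by restricting the Hodge isometry $\Lambda_X\cong\Lambda_Y$ from Theorem~\ref{prop:rel_version_hodge_isom_lattice} to the transcendental parts and identifying these with the transcendental lattices via Lemma~\ref{lem:transcendental_hodge_structure}. Nothing is missing.
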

An immediate consequence is the following.
\begin{thm}
\label{prop:FM_Partners_finite}
	For a fixed projective $\Kdrein$-type hyper-Kähler manifold $X$ the 
number of projective $\Kdrein$-type manifolds $Y$ up to isomorphism with $\Db(X)\cong \Db(Y)$ is finite.
\end{thm}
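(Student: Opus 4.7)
The strategy is to mirror the classical Bridgeland--Maciocia finiteness argument for Fourier--Mukai partners of K3 surfaces, using the $\Kdrein$ lattice $\Lambda$ in place of the Mukai lattice. First, by Theorem~\ref{prop:rel_version_hodge_isom_lattice}, any derived equivalence $\Db(X)\cong \Db(Y)$ produces a Hodge isometry $\Phi^{\tH}\colon \Lambda_X\cong \Lambda_Y$. Passing to transcendental parts (Corollary~\ref{cor:Der_equi_yields_transcendental_isometry} together with Lemma~\ref{lem:transcendental_hodge_structure}) yields a Hodge isometry of the integral transcendental lattices $T_X\cong T_Y$.

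Next, the abstract lattice underlying $\h^2(Y,\BZ)$ depends only on the deformation type, and is equal to the fixed $\Kdrein$ abstract lattice $L\cong\h^2(S,\BZ)\oplus\langle 2-2n\rangle$. By Nikulin's theorem on primitive embeddings of even lattices, the set of $\RO(L)$-orbits of primitive embeddings $T_X\hookrightarrow L$ is finite. Each such orbit determines an isomorphism class of weight-two Hodge structures on $L$ by declaring the image of $T_X$ to be the transcendental part with its prescribed Hodge structure and its orthogonal complement to be of type $(1,1)$. Hence only finitely many Hodge structures can arise on $\h^2(Y,\BZ)$ in this manner.

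Finally, invoking Verbitsky's Global Torelli Theorem together with the standard finiteness of birational models of projective $\Kdrein$-type hyper-Kähler manifolds, each such Hodge structure on $\h^2(Y,\BZ)$ is realized by at most finitely many isomorphism classes of $Y$. The main obstacle is this last step: the passage from an abstract Hodge structure on $\h^2$ to a concrete projective model $Y$ is only finite-to-one thanks to hyper-Kähler-specific input (finiteness of Kähler-type chambers and of birational models), and this is where the argument departs from the purely lattice-theoretic core that suffices in the K3 case.
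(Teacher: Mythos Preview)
Your proposal is correct and follows essentially the same route as the paper's proof: both use the Hodge isometry of transcendental lattices from Corollary~\ref{cor:Der_equi_yields_transcendental_isometry}, finiteness of primitive embeddings into the fixed $\Kdrein$ lattice (the paper cites Kneser rather than Nikulin), and then Global Torelli plus finiteness of birational models. The only place where the paper is more explicit is the final step, where it separates two ingredients you bundle together: that $\mathrm{Mon}^2(Y)\subset\RO(\h^2(Y,\BZ))$ has finite index \cite[Cor.\ 1.8]{MarkmanMonodromyModuli} (so that a fixed Hodge structure on the abstract lattice corresponds to only finitely many birational classes via Verbitsky's Torelli), and then the Markman--Yoshioka finiteness of birational models \cite[Cor.\ 1.5]{MarkmanYoshiokaKMConjecture}.
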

\begin{proof}
	The proof is similar in flavour to \cite[Prop.\ 5.3]{BridgelandMaciociaSurfaces}. 
	
	Corollary~\ref{cor:Der_equi_yields_transcendental_isometry} implies that 
for any $Y$ as in the assertion its transcendental lattice $\h^2(Y,\BZ)_{\textup{tr}}$ is Hodge isometric to $\h^2(X,\BZ)_{\textup{tr}}$. As abstract lattices the number of embeddings
	\[
	\h^2(Y,\BZ)_{\textup{tr}}\hookrightarrow\h^2(Y,\BZ)
	\]
	is finite up to isometries of $\h^2(Y,\BZ)$, see \cite[Satz 30.2]{KneserQuadratischeFormen}. Therefore, the set of lattices appearing as $\textup{NS}(Y)$ for any such $Y$ is as well finite. 
	
	As in \cite[Prop.\ 5.3]{BridgelandMaciociaSurfaces} we conclude that there are only finitely many Hodge structures on the lattice $\h^2(Y,\BZ)$ being realized by $\Kdrein$-type hyper-Kähler manifolds $Y$ derived equivalent to our fixed $X$. Since the monodromy group $\mathrm{Mon}^2(Y)$ 
is a finite index subgroup of $\RO(\h^2(Y,\BZ))$ \cite[Cor.\ 1.8]{MarkmanMonodromyModuli} the Global Torelli Theorem \cite{VerbitskyTorelli} shows 
that up to birational equivalence there are only finitely many hyper-Kähler manifolds realizing a given Hodge structure on $\h^2(Y,\BZ)$. The 
assertion now follows from \cite[Cor.\ 1.5]{MarkmanYoshiokaKMConjecture}. 

\end{proof}
We also have the following structural result.
\begin{cor}
	\label{cor:prop_kernel_der_equi}
	Let $X,Y$ and $\Phi$ be as in Theorem~\ref{prop:rel_version_hodge_isom_lattice} and let $\CE$ be the Fourier--Mukai kernel of $\Phi$. Then the rank of $\CE$ is of the form $n!a^n$ for $a\in \BZ$ and the smallest non-zero cohomological degree of the Mukai vector of 
the image of $k(x)$ under $\Phi$ for all $x\in X$ is $0,2n$ or $4n$. 
In the second case $Y$ admits a rational Lagrangian fibration.
\end{cor}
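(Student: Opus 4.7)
The plan is to read off all three assertions from the single vector $v\coloneqq\Phi^{\tH}(\beta) \in \tH(Y,\BQ)$, by combining the integrality furnished by Theorem~\ref{prop:rel_version_hodge_isom_lattice} with the extended Mukai vector formalism of Section~\ref{sec:vector_line_bundles}. I would first write $v = r\alpha + \lambda + s\beta$ with $r, s \in \BQ$ and $\lambda \in \h^2(Y,\BQ)$. Since $\beta \in \Lambda_X$, Theorem~\ref{prop:rel_version_hodge_isom_lattice} forces $v \in \Lambda_Y$, and unfolding the basis $\tal, \h^2(S,\BZ), \tde, \beta$ of $\Lambda_Y$ shows that the $\alpha$-coefficient is automatically integral, so $r \in \BZ$; moreover the same computation puts $\lambda = \mu + m\delta$ in $\h^2(Y,\BZ)$ whenever $r = 0$. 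Because $\Phi^{\tH}$ is a Hodge isometry and $\beta$ is isotropic of type $(1,1)$, the same holds for $v$, giving $\lambda \in \h^{1,1}(Y,\BC)$ and $b(\lambda,\lambda) = 2rs$. The image $\Phi(k(x))$ lies in the $k(y)$-orbit of $\Db(Y)$ with extended Mukai vector $\pm v$, and using $c_X = 1$ for $\Kdrein$-type, the defining identity of Section~\ref{sec:vector_line_bundles} reads $v(\Phi(k(x))) = \pm T(v^n)$. Since $\CE|_{\{x\}\times Y} = \Phi(k(x))$, the rank of $\CE$ equals the degree-$0$ component of this Mukai vector; in $(r\alpha + \lambda + s\beta)^n$ only $r^n\alpha^n$ contributes in degree $0$, and Lemma~\ref{lem:relation_q_alphabeta} gives $T(\alpha^n) = n!\,\One$, so the rank is $\pm r^n n! = n!\,a^n$ with $a \in \BZ$.

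Next I would analyse the smallest non-zero cohomological degree of $v(\Phi(k(x)))$ using that a monomial $\alpha^a \lambda^b \beta^c$ in $\Sym^n(\tH(Y,\BQ))$ has cohomological degree $2b + 4c$ and that $T$ preserves the grading. If $r \neq 0$, the previous paragraph produces a non-zero degree-$0$ component, so the answer is $0$. If $r = 0$, every surviving monomial has $a = 0$ and hence degree at least $2n$, and the degree-$2n$ coefficient equals $T(\lambda^n)$. To see $T(\lambda^n) \neq 0$ whenever $\lambda \neq 0$, note that $b(\lambda,\lambda) = 2rs = 0$ gives $\Delta(\lambda^n) = \binom{n}{2}b(\lambda,\lambda)\lambda^{n-2} = 0$, so $\lambda^n \in \ker\Delta = \psi(\SH(Y,\BQ))$; an induction using the derivation rules $e_\lambda(\alpha) = \lambda$, $e_\lambda(\mu) = b(\lambda,\mu)\beta$ and the isotropy yields $e_\lambda^n(\alpha^n) = n!\,\lambda^n$ in $\Sym^n(\tH(Y,\BQ))$, so $\psi(\lambda^n) = e_\lambda^n(\psi(\One)) = \lambda^n$, and injectivity of $\psi$ forces $T(\lambda^n) = \lambda^n \neq 0$ in $\SH^{2n}(Y,\BQ)$. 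Finally, if $r = \lambda = 0$ then $v = s\beta$ and Lemma~\ref{lem:relation_q_alphabeta} gives $v(\Phi(k(x))) = \pm s^n\pt$, concentrated in degree $4n$.

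It remains to produce the Lagrangian fibration in the middle case $r = 0,\ \lambda \neq 0$, where $\lambda$ is an integral, $(1,1)$, BBF-isotropic class on $Y$. The hyper-Kähler SYZ theorem for $\Kdrein$-type manifolds, obtained through deformations from moduli spaces of stable objects on K3 surfaces, then guarantees a rational Lagrangian fibration of $Y$ with fibre class proportional to $\lambda$. The most technical step in this plan is the identification $T(\lambda^n) = \lambda^n$, which relies on the $\Fg$-equivariance of $\psi$ in an essential way; the Lagrangian fibration conclusion itself is a direct application of the SYZ result invoked as a black box.
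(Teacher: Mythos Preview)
Your argument is correct and follows the same route as the paper: both read everything off from $v=\Phi^{\tH}(\beta)$, use Theorem~\ref{prop:rel_version_hodge_isom_lattice} to force $v\in\Lambda_Y$ (hence the $\alpha$-coefficient, and in the rank-zero case the $\h^2$-part, are integral), and appeal to the extended Mukai vector identity $v(\Phi(k(x)))=\pm T(v^n)$ together with $c_X=1$. Your explicit verification that $T(\lambda^n)=\lambda^n\neq 0$ for isotropic $\lambda$ reproduces part of the proof of Lemma~\ref{lem:properties_kx_vectorizable}~(v), which the paper simply cites; and for the rational Lagrangian fibration the paper names the specific chain \cite{MarkmanSurvey} (moving the isotropic integral $(1,1)$-class into the closure of the birational K\"ahler cone) followed by \cite[Cor.\ 1.1]{MatsushitaOn}, which is exactly the ``SYZ for $\Kdrein$-type'' black box you invoke.
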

\begin{proof}
	The first statements follow from Lemma~\ref{lem:properties_kx_vectorizable} and Theorem~\ref{prop:rel_version_hodge_isom_lattice}. 
	
	For the last assertion, we know that $\beta\in \Lambda_X$ is mapped to $\lambda + c\beta\in \Lambda_Y$ for $c\in \BZ$ and $\lambda \in \h^{1,1}(Y,\BZ)$ satisfying $b(\lambda,\lambda)=0$. Let $\CC_Y \subset \h^2(Y,\BR)$ the the positive cone of $Y$. Then $\lambda^\perp \cap \CC_Y \neq 0$. By \cite{MarkmanSurvey} there exist an isometry mapping $\lambda$ into the closure of the birational Kähler cone. 
	The result now follows from \cite[Cor.\ 1.1]{MatsushitaOn}. 
\end{proof}
This yields strong restrictions on Fourier--Mukai kernels of derived equivalences between hyper-Kähler manifolds of $\Kdrein$-type. Note that 
all three cases $0,2n$ and $4n$ occur, see Proposition~\ref{prop:STonHilbn} and Section~\ref{subsec:Example_POINCARE_ADM}. Furthermore,  Lemma~\ref{lem:properties_kx_vectorizable} implies that if $\CE$ if of rank zero, then for all $x\in X$ all Chern classes of $\CE_x$ are isotropic as in Lemma~\ref{lem:properties_kx_vectorizable}. 
\subsection{Moduli spaces} We demonstrate consequences for smooth moduli spaces of stable objects, see \cite[Ch.\ 10]{HuybrechtsK3} and \cite{BayMacMMP} for the necessary background and notation. 
\begin{cor}
\label{cor:der_equi_moduli_spaces}
	Let $M^S_{\sigma}(v)$ be a smooth moduli space of stable objects on a projective K3 surface $S$ and $X$ a projective $\Kdrein$-type hyper-Kähler manifold such that $\Db(X)\cong \Db(M^S_{\sigma}(v))$. Then $X$ is itself a moduli space of stable objects on $S$. 
\end{cor}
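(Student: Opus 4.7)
The strategy is to reduce the statement, via the Global Torelli Theorem of Verbitsky \cite{VerbitskyTorelli}, to the already cited result of Bayer--Macrì \cite[Thm.\ 1.2]{BayMacMMP}: any $\Kdrein$-type hyper-Kähler manifold which is birational to a moduli space of stable objects on $S$ is itself isomorphic to such a moduli space. Thus it suffices to exhibit a primitive $v' \in \tH^{1,1}(S,\BZ)$ with $(v')^2 = 2n-2$, a $v'$-generic stability condition $\sigma'$ on $\Db(S)$, and a birational map $X\dashrightarrow M^S_{\sigma'}(v')$.

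Set $M \coloneqq M^S_\sigma(v)$. First I would apply Theorem~\ref{prop:rel_version_hodge_isom_lattice} to obtain a Hodge isometry $\Phi^{\tH}\colon \Lambda_X \xrightarrow{\sim} \Lambda_M$. Next, I would exploit the structure of $M$ as a moduli space on $S$: the Mukai morphism provides a Hodge isometry $\theta\colon v^\perp \xrightarrow{\sim} \h^2(M,\BZ)$, and I would upgrade this to a canonical Hodge-theoretic comparison between $\Lambda_M$ (with the Hodge structure induced from $\tH(M,\BQ)$) and the Mukai lattice $\tH(S,\BZ)$ of $S$. The key is to match distinguished extended Mukai vectors on the $M$-side, controlled by the Ploog-type derived equivalences and the computations of Section~\ref{sec:autoeq_hilb}, with the Mukai vectors of the corresponding objects on $S$ (applied to the universal Fourier--Mukai kernel on $S\times M$). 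Composing $\theta$ with $\Phi^{\tH}$ would then produce a primitive embedding $\h^2(X,\BZ)\hookrightarrow\tH(S,\BZ)$ of Hodge lattices whose orthogonal complement is spanned by a primitive algebraic vector $v' \in \tH^{1,1}(S,\BZ)$ with $(v')^2 = v^2 = 2n-2$; note that Corollary~\ref{cor:Der_equi_yields_transcendental_isometry} is already the restriction of this picture to the transcendental part.

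Finally, I would pick any $v'$-generic stability condition $\sigma'$ on $\Db(S)$; the moduli space $M' \coloneqq M^S_{\sigma'}(v')$ is a $\Kdrein$-type hyper-Kähler manifold whose Mukai morphism identifies $\h^2(M',\BZ)$ with $(v')^\perp$. The resulting Hodge isometry $\h^2(X,\BZ) \cong (v')^\perp \cong \h^2(M',\BZ)$ can be composed with an element of the monodromy group $\mathrm{Mon}^2(M')$, which has finite index in the full isometry group by \cite[Cor.\ 1.8]{MarkmanMonodromyModuli}, to arrange that it lies in the parallel transport orbit; Verbitsky's Global Torelli Theorem then produces a birational equivalence $X\dashrightarrow M'$, and Bayer--Macrì upgrades this to the desired conclusion that $X$ is itself a moduli space on $S$. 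The principal technical obstacle is the precise construction in the second step of the Hodge-theoretic comparison between $\Lambda_M$ and $\tH(S,\BZ)$: one must carefully handle the B-field twist implicit in the definition of $\Lambda$ (see Remark~\ref{rmk:comparison_twisted_HS}) as well as the rank mismatch between $\Lambda_M$ (rank $25$) and $\tH(S,\BZ)$ (rank $24$), the latter being accounted for by the position of $v$ inside $\tH(S,\BZ)$. Once this comparison is in place, the remainder is a fairly standard Torelli-type argument combined with \cite[Thm.\ 1.2]{BayMacMMP}.
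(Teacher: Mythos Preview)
Your strategy differs substantially from the paper's and contains two genuine gaps.

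\textbf{The paper's argument.} The paper does \emph{not} attempt to embed $\h^2(X,\BZ)$ into $\tH(S,\BZ)$. Instead it uses only the transcendental part: Corollary~\ref{cor:Der_equi_yields_transcendental_isometry} gives $\h^2(X,\BZ)_{\textup{tr}} \cong \h^2(M,\BZ)_{\textup{tr}}$, and the Mukai morphism gives $\h^2(M,\BZ)_{\textup{tr}} \cong \h^2(S,\BZ)_{\textup{tr}}$. Now Addington's lattice-theoretic criterion \cite[Prop.~4]{AddingtonTwoConj} together with \cite[Thm.~1.2(c)]{BayMacMMP} yields that $X$ is a moduli space $M^{S'}_{\sigma'}(v')$ on \emph{some} K3 surface $S'$ with $\h^2(S',\BZ)_{\textup{tr}} \cong \h^2(S,\BZ)_{\textup{tr}}$. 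Orlov's derived Torelli then gives $\Db(S') \cong \Db(S)$, and any such equivalence $\Phi$ transports the moduli problem: $M^{S'}_{\sigma'}(v') \cong M^S_{\Phi.\sigma'}(\Phi^{\h}(v'))$. The whole point is that Addington's criterion and Orlov's theorem already package the delicate Torelli-type argument you are trying to carry out by hand.

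\textbf{Gap 1: the comparison step.} Your ``Hodge-theoretic comparison between $\Lambda_M$ and $\tH(S,\BZ)$'' cannot be an isometry (ranks $25$ versus $24$), and you give no mechanism for extracting a primitive embedding $\h^2(X,\BZ)\hookrightarrow\tH(S,\BZ)$ from $\Phi^{\tH}\colon\Lambda_X\cong\Lambda_M$. The sublattice $\h^2(X,\BZ)\subset\Lambda_X$ is not preserved by arbitrary Hodge isometries of $\Lambda$: derived equivalences genuinely move the classes $\tal,\beta,\tde$ around (see e.g.\ Proposition~\ref{prop:STonHilbn} or Section~\ref{subsec:Example_POINCARE_ADM}), so $\Phi^{\tH}(\h^2(X,\BZ))$ need not sit inside $\h^2(M,\BZ)\cong v^\perp$. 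Only the transcendental part is guaranteed to match up, which is exactly what the paper uses.

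\textbf{Gap 2: the Torelli step.} Even granting an abstract Hodge isometry $\h^2(X,\BZ)\cong\h^2(M',\BZ)$, your claim that one can ``compose with an element of $\mathrm{Mon}^2(M')$ to arrange that it lies in the parallel transport orbit'' is incorrect. If $\phi$ is not a parallel transport operator, then neither is $m\circ\phi$ for any $m\in\mathrm{Mon}^2(M')$, since parallel transport operators form a torsor under monodromy. For $\Kdrein$-type the monodromy group is $\widetilde{\RO}^+(\h^2)$, so a Hodge isometry acting nontrivially on the discriminant group $\BZ/(2n-2)\BZ$ is never a parallel transport, and Verbitsky's theorem does not apply. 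This obstruction is real: it is precisely what underlies the example of \cite{ADMModuli} (cf.\ Remark~\ref{rmk:explanation_ADM}).
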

\begin{proof}
	For a K3 surface $S$ and a primitive Mukai vector $v$ with generic stability condition $\sigma \in \Stab^{\dagger}(S)$ one has a Hodge isometry
	\[
	\h^2(M^S_{\sigma}(v),\BZ)\cong v^{\perp} \subset \tH(S,\BZ),
	\]
	see \cite{YoshiokaAbelian}, \cite[Thm.\ 6.10]{BayMacPrBiGeo} and \cite[Thm.\ 1.1 (2)]{BottiniStableSheavesWallCrossing}. Since $v$ is in the algebraic part of the Mukai lattice of the K3 surface the restriction of the above Hodge isometry yields
	\[
	\h^2(M^S_{\sigma}(v),\BZ)_{\textup{tr}}\cong \h^2(S,\BZ)_{\textup{tr}}.
	\]
	
	Corollary~\ref{cor:Der_equi_yields_transcendental_isometry} together with \cite[Prop.\ 4]{AddingtonTwoConj} and \cite[Thm.\ 1.2(c)]{BayMacMMP} imply 
that $X$ is a moduli space $M^{S'}_{\sigma'}(v')$ of stable objects on a K3 surface $S'$ such that
	\[
	\h^2(S,\BZ)_{\textup{tr}}\cong \h^2(S',\BZ)_{\textup{tr}}.
	\]
	From \cite[Thm.\ 3.3]{OrlovEquiFM} we infer that $S$ and $S'$ are derived equivalent. Choosing one such equivalence $\Phi \colon \Db(S')\cong \Db(S)$ yields an isomorphism
	\[
	\Phi \colon M^{S'}_{\sigma'}(v')\cong M^S_{\Phi.\sigma'}(\Phi^{\h}(v'))
	\]
	where the latter variety is a moduli space of stable objects on $S$.
\end{proof}
The proof of the corollary also shows the following.
\begin{cor}
\label{cor:der_equi_K3_Hilbn}
	For two smooth moduli spaces $M^{S}_{\sigma}(v)$ and $M^{S'}_{\sigma'}(v')$ of stable objects on projective K3 surfaces $S$ and $S'$ with $\Db(M^S_{\sigma}(v))\cong \Db(M^{S'}_{\sigma'}(v'))$ we have $\Db(S)\cong \Db(S')$. Furthermore, $S$ and $S'$ are derived equivalent if and only if their Hilbert schemes $S^{[n]}$ and $S'^{[n]}$ are derived equivalent. 
\end{cor}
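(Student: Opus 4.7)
The plan is to reuse the argument from Corollary~\ref{cor:der_equi_moduli_spaces}, combined with the standard Ploog-type construction of equivalences of Hilbert schemes from equivalences of the underlying surfaces.

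For the first statement, given $\Phi \colon \Db(M^S_\sigma(v)) \cong \Db(M^{S'}_{\sigma'}(v'))$, Corollary~\ref{cor:Der_equi_yields_transcendental_isometry} yields a Hodge isometry
\[
\h^2(M^S_\sigma(v),\BZ)_{\textup{tr}} \cong \h^2(M^{S'}_{\sigma'}(v'),\BZ)_{\textup{tr}}.
\]
As recalled in the proof of Corollary~\ref{cor:der_equi_moduli_spaces}, the Hodge isometry $\h^2(M^S_\sigma(v),\BZ) \cong v^\perp \subset \tH(S,\BZ)$ restricts to an isometry $\h^2(M^S_\sigma(v),\BZ)_{\textup{tr}} \cong \h^2(S,\BZ)_{\textup{tr}}$ because $v$ is algebraic, and similarly for $S'$. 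Combining these gives a Hodge isometry $\h^2(S,\BZ)_{\textup{tr}} \cong \h^2(S',\BZ)_{\textup{tr}}$, and Orlov's derived Torelli theorem \cite[Thm.\ 3.3]{OrlovEquiFM} then produces a derived equivalence $\Db(S) \cong \Db(S')$.

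For the second statement, the reverse implication is an immediate application of the first part: the Hilbert scheme $S^{[n]}$ is the moduli space of ideal sheaves of length $n$ subschemes with primitive Mukai vector $(1,0,1-n)$, and analogously for $S'^{[n]}$, so $\Db(S^{[n]})\cong \Db(S'^{[n]})$ forces $\Db(S)\cong \Db(S')$. For the forward implication, given an equivalence $\Phi \colon \Db(S) \cong \Db(S')$ with Fourier--Mukai kernel $\CE \in \Db(S \times S')$, form the external product $\CE^{\boxtimes n} \in \Db(S^n \times S'^n)$ equipped with its canonical diagonal $\FS_n$-linearisation. The induced Fourier--Mukai functor $\Db_{\FS_n}(S^n) \cong \Db_{\FS_n}(S'^n)$ is an equivalence, and composing with the Bridgeland--King--Reid equivalences on both sides produces the desired equivalence $\Db(S^{[n]}) \cong \Db(S'^{[n]})$. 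This is the natural two-surface generalisation of Ploog's map $\phi_{[n]}$ recalled in Section~\ref{sec:Introcude_DMon}, and its construction is routine once the kernel $\CE^{\boxtimes n}$ is set up.

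The only non-trivial input is the first statement, and its main content is the application of Corollary~\ref{cor:Der_equi_yields_transcendental_isometry} (hence ultimately Theorem~\ref{prop:rel_version_hodge_isom_lattice}) combined with the identification of the transcendental lattices of $M^S_\sigma(v)$ and $S$. The remainder of the argument is bookkeeping with Orlov's theorem and the Bridgeland--King--Reid equivalence, so no serious obstacle is expected.
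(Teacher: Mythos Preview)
Your proposal is correct and follows essentially the same approach as the paper: the first statement is deduced from Corollary~\ref{cor:Der_equi_yields_transcendental_isometry} together with the identification of transcendental lattices (exactly the argument of Corollary~\ref{cor:der_equi_moduli_spaces}) and Orlov's theorem, while the forward implication of the second statement is Ploog's result \cite[Prop.~8]{PloogEquivFinite}, which is precisely the box-product construction you outline. The paper's own proof is simply a two-line pointer to these same ingredients.
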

\begin{proof}
	The first part follows from the above and that derived equivalent K3 surfaces have derived equivalent Hilbert schemes was proven in \cite[Prop.\ 8]{PloogEquivFinite}. 
\end{proof}
\subsection{Hilbert schemes} 
We specialize to elliptic K3 surfaces $S$ with a section and their Hilbert schemes. Recall that an elliptic K3 surface $S$ has a section if and only if $U \subset \textup{NS}(S)$ \cite[Rem.\ 11.1.4]{HuybrechtsK3}. 
Theorem~\ref{thm:description_dn} allows us to determine in this situation 
the image of the representation $\rho^{\tH}$ up to finite index.
\begin{thm}
\label{prop:image_representation_Hilb_U}
	For the Hilbert scheme $S^{[n]}$ of a K3 surface with $U\subset \textup{NS}(S)$ the image $\textup{Im}(\rho^{\tH})$ of the representation $\rho^{\tH}$ satisfies
	\[
	\hat{\Aut}^+(\Lambda_{S^{[n]}})\subset \textup{Im}(\rho^{\tH}) \subset \Aut(\Lambda_{S^{[n]}}).
	\]
\end{thm}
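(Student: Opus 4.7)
The upper bound $\textup{Im}(\rho^{\tH}) \subset \Aut(\Lambda_{S^{[n]}})$ is immediate from Corollary~\ref{cor:Representation_autoeq_K3n_Lattice}, since every element of $\textup{Im}(\rho^{\tH})$ is a Hodge isometry of the extended Mukai lattice preserving $\Lambda$. For the lower bound, the plan is to mimic the strategy of Proposition~\ref{prop:first_incl_DM} — but now producing honest Hodge isometries realized by auto-equivalences of $\Db(S^{[n]})$ — by exhibiting enough explicit derived symmetries and then invoking the Eichler criterion \cite[Prop.~3.3]{GHS_Pi1} on the orthogonal decomposition
\[
\Lambda = U \oplus \Lambda', \qquad U = \BZ\tal \oplus \BZ(-\beta), \qquad \Lambda' = \h^2(S,\BZ) \oplus \BZ\tde.
\]
Note that $\tal$ and $\beta$ are algebraic by construction, so transvections $t(u,\lambda)$ with $u \in U$ isotropic and $\lambda \in \Lambda'$ of type $(1,1)$ are Hodge isometries, and these Hodge-respecting Eichler transvections together with the shift generate $\hat{\Aut}^+(\Lambda_{S^{[n]}})$.

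The concrete auto-equivalences feeding this machine are: (i) the shift $[1]$, giving $-\id$ on $\tH$ and thus passing between the $+\id$ and $-\id$ actions on the discriminant; (ii) twisting by line bundles $\CL \in \Pic(S^{[n]})$, which by the computation preceding Definition~\ref{defn:extended_Mukai_line_bundles} induces $B_{\Rc_1(\CL)} = t(-\beta, \Rc_1(\CL))$ for any $\Rc_1(\CL) \in \textup{NS}(S^{[n]}) \subset \Lambda'$; (iii) the spherical twist $P$ associated via Ploog's embedding to $\ST_{\CO_S}$, which by Proposition~\ref{prop:STonHilbn} acts as the reflection $(-1)^{n+1} s_{\tal+\beta}$; and (iv) auto-equivalences of the form $\phi_{[n]}(\Phi)$ for $\Phi \in \Aut(\Db(S))$. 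Combining (ii) and (iii), conjugation of $B_\lambda$ by $P$ produces the transvections $t(\tal,\lambda)$ for $\lambda \in \textup{NS}(S^{[n]})$, so together these realize all Eichler transvections along vectors in $\textup{NS}(S^{[n]}) \subset \Lambda'$.

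The hypothesis $U \subset \textup{NS}(S)$ enters precisely through ingredient (iv). By Mukai--Orlov--HLOY--HMS the cohomological representation of $\Aut(\Db(S))$ surjects onto $\Aut^+(\tH(S,\BZ))$, and the same line of reasoning used to prove Theorem~\ref{thm:description_dn} shows that the action on $\tH(S^{[n]},\BQ)$ of $\phi_{[n]}(\Phi)$ is given by $\det(\Phi^{\h})^{n+1} B_{-\delta/2} \circ \iota(\Phi^{\h}) \circ B_{\delta/2}$, a Hodge isometry preserving $\Lambda$. When $U \subset \textup{NS}(S)$, the algebraic part of $\tH(S,\BZ)$ contains two orthogonal hyperbolic planes, which is exactly the input the Eichler criterion requires to identify $\Aut^+(\tH(S,\BZ))$ with the group generated by Hodge-type transvections on $\tH(S,\BZ)$. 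Transporting these via $d_n$ yields all Eichler transvections $t(\tal,\tde)$ and $t(-\beta,\tde)$, thereby supplying the transvections along $\tde$ that were missing from (i)–(iii). Once all Hodge-respecting Eichler transvections in the basis above are in hand, $\widetilde{\Aut}^+(\Lambda_{S^{[n]}})$ is realized, and postcomposing with the shift gives the full $\hat{\Aut}^+(\Lambda_{S^{[n]}})$.

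The main obstacle I anticipate is the bookkeeping in step (iv): one must verify both that the formula of Theorem~\ref{thm:description_dn} genuinely upgrades from parallel-transport-plus-equivalences to honest Hodge isometries induced by a single auto-equivalence of $\Db(S^{[n]})$ (so that the output lies in $\textup{Im}(\rho^{\tH})$ and not merely in $\DM(S^{[n]})$), and that the Eichler-type generation statement for $\Aut^+(\tH(S,\BZ))$ in the presence of an algebraic hyperbolic plane suffices to hit all the required transvections on $\Lambda'$. The rest of the argument is a careful but routine translation of the proof of Proposition~\ref{prop:first_incl_DM} into the Hodge-equivariant setting.
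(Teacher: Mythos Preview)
Your upper bound is fine and matches the paper. The lower bound assembles the right ingredients but has a genuine gap in how they are organized.

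The central claim --- that Hodge-respecting Eichler transvections $t(u,\lambda)$ with $u\in\{\tal,-\beta\}$ and $\lambda\in\Lambda'$ of type $(1,1)$, together with the shift, generate $\hat{\Aut}^+(\Lambda_{S^{[n]}})$ --- is false in general. Every such transvection acts as the identity on $\Lambda_{\textup{tr}}=T(S)$, since both $u$ and $\lambda$ lie in $T(S)^\perp$; so any product of them, possibly times $-\id$, acts on $T(S)$ by $\pm\id$. But a Hodge isometry of $\Lambda$ may act on $T(S)$ by an arbitrary element of the finite cyclic group $\Aut_{\textup{Hdg}}(T(S))$, and nothing in the hypothesis $U\subset\textup{NS}(S)$ forces this group to be $\{\pm\id\}$. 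The ``Eichler-type generation statement for $\Aut^+(\tH(S,\BZ))$'' you flag as an obstacle fails for exactly this reason.

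Relatedly, you misidentify what ingredient (iv) supplies. The isometries $\det(\Phi^\h)^{n+1}B_{-\delta/2}\circ\iota(\Phi^\h)\circ B_{\delta/2}$ all \emph{fix} $\tde$ (since $\iota(\Phi^\h)$ fixes $\delta$), so they cannot produce $t(\tal,\tde)$ or $t(-\beta,\tde)$, which do not fix $\tde$. Those transvections are already available from (ii) and (iii): indeed $t(-\beta,\tde)=B_\delta$ is the twist by $\CO_{S^{[n]}}(\delta)$. What (iv) genuinely contributes is the \emph{full} group $\Aut^+(\tH(S,\BZ))$ acting on $\tde^\perp\cong\Lambda_S$ --- including those Hodge isometries acting non-trivially on $T(S)$, which your transvections can never reach.

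The paper's organization avoids the gap. Given $\gamma$, one first uses (ii)+(iii) together with Eichler transitivity \cite[Prop.~3.3]{GHS_Pi1} to reduce to $\gamma(\tde)=\tde$; the hypothesis $U\subset\textup{NS}(S)$ enters \emph{here}, guaranteeing that $\Lambda_{S^{[n]},\textup{alg}}$ contains two hyperbolic planes so that $\gamma(\tde)$ and $\tde$ lie in the same orbit under the realized transvection group. After this reduction $\gamma$ is a Hodge isometry of $\tde^\perp=B_{-\delta/2}(\tH(S,\BZ))$, and one invokes the K3 result \cite[Cor.~3]{HMSK3Orientation} to find $\eta\in\Aut(\Db(S))$ realizing it; Theorem~\ref{thm:description_dn} then shows $\phi_{[n]}(\eta)$ realizes $\pm\gamma$ on the Hilbert scheme. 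The shift handles the sign, and Proposition~\ref{lem:signrep_on_coh} supplies the element acting by $-\id$ on the discriminant.
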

The group $\hat{\Aut}^+(\Lambda_{S^{[n]}})$ is the group of all Hodge isometries with real spinor norm one which act via $\pm \id$ on the discriminant group. 
\begin{proof}
	Let $\gamma\in \Aut^+(\Lambda_{S^{[n]}})$ be a Hodge isometry with real spinor norm one which acts 
trivially on the discriminant group. We want to show $\gamma \in \textup(Im)(\rho^{\tH})$.

For line bundles $\CL\in \Pic(S^{[n]})$ the auto-equivalence $\SM_\CL$ given by tensoring with $\CL$ as well as the equivalence $\phi_{[n]}(\ST_{\CO_S})$ from Proposition~\ref{prop:STonHilbn} are contained in $\Aut(\Db(S^{[n]}))$. 
The assumption $U \subset \textup{NS}(S)$ implies that $\Lambda_{S^{[n]},\textup{alg}}$ contains two copies of the hyperbolic plane $U$. The elements $\tde$ and $\gamma(\tde)$ are both contained in $\Lambda_{S^{[n]},\textup{alg}}$ and have the same self-pairing as well as divisibility. As explained in the proof of Proposition~\ref{prop:first_incl_DM} using \cite[Sec.\ 3]{GHS_Pi1} we conclude that there exists a derived equivalence $\Phi\in\Aut(\Db(S^{[n]}))$ whose induced action $\Phi^{\tH}$ is trivial on the discriminant group, has real spinor norm one and sends $\gamma(\tde)$ to $\tde$, i.e.\
\[
\Phi^{\tH} \circ \gamma (\tde) = \tde. 
\]

In particular, the isometry $\Phi^{\tH} \circ \gamma$ restricts to a Hodge isometry of 
\[
\tde^\perp = B_{-\delta/2} (\h^2(S,\BZ)) \subset \Lambda_{S^{[n]}}
\]
with real spinor norm one. Using \cite[Cor.\ 3]{HMSK3Orientation} there is an auto-equivalence $\eta \in \Aut(\Db(S))$ such that
\[
B_{-\delta/2} \circ \Phi^{\tH} \circ \gamma \circ B_{\delta/2} 
\]
restricted to a Hodge isometry of $\tH(S,\BZ)$
agrees with $\eta^{\tH}$. 
Theorem~\ref{thm:description_dn} implies that $\Phi^{\tH}\circ \gamma$ or $-(\Phi^{\tH}\circ \gamma)$ lies in $\textup{Im}(\rho^{\tH})$. As the shift functor $[1]$ acts as $-\id$, we conclude that $\Phi^{\tH}\circ \gamma \in \textup{Im}(\rho^{\tH})$ and, therefore, $\gamma \in \textup{Im}(\rho^{\tH})$.

Hence, we have proven that all Hodge isometries with real spinor norm one which act trivially on the discriminant lattice are contained in $\textup{Im}(\rho^{\tH})$. The assertion now follows from Proposition~\ref{lem:signrep_on_coh} which yields an isometry acting as $-\id$ on the discriminant group. 
\end{proof}
\begin{prop}
	Let $X$ be a projective $\Kdrein$-type hyper-Kähler manifold such that $\Db(X)\cong \Db(S^{[n]})$ for a K3 surface $S$ with $U\subset \textup{NS}(S)$. Then $X$ and $S^{[n]}$ are birational.
\end{prop}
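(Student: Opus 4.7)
The plan is to apply Theorem~\ref{prop:rel_version_hodge_isom_lattice} to obtain the Hodge isometry $\Phi^{\tH}\colon\Lambda_X\cong\Lambda_{S^{[n]}}$ associated to $\Phi$, and then to modify it by derived autoequivalences of $S^{[n]}$ so that it restricts to a Hodge isometry $\psi\colon\h^2(X,\BZ)\cong\h^2(S^{[n]},\BZ)$ lying in the monodromy group $\mathrm{Mon}^2(S^{[n]})$. Birationality of $X$ and $S^{[n]}$ will then follow from Verbitsky's Global Torelli Theorem combined with Markman's characterization of $\mathrm{Mon}^2$ for $\Kdrein$-type manifolds.

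The key structural observation is that, inside $\Lambda$, the sublattice $\h^2(X,\BZ)=\h^2(S,\BZ)\oplus\BZ\tde$ is exactly the orthogonal complement of the hyperbolic plane $U_X\coloneqq\BZ\tal_X\oplus\BZ\beta_X$, and analogously for $S^{[n]}$; hence matching up these two hyperbolic planes under $\Phi^{\tH}$ will produce the sought Hodge isometry of the $\h^2$'s. The hypothesis $U\subset\textup{NS}(S)$ is exactly what is needed here: it provides a second algebraic hyperbolic plane inside $\Lambda_{S^{[n]},\textup{alg}}$, coming from $U\subset\textup{NS}(S)\subset\h^2(S,\BZ)$, so that Eichler's criterion (\cite[Prop.\ 3.3]{GHS_Pi1}) becomes applicable to algebraic hyperbolic planes. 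Combining this with Theorem~\ref{prop:image_representation_Hilb_U}, we produce an autoequivalence $\Psi\in\Aut(\Db(S^{[n]}))$ such that $\Psi^{\tH}\circ\Phi^{\tH}$ maps $U_X$ onto $\BZ\tal_{S^{[n]}}\oplus\BZ\beta_{S^{[n]}}$; a further adjustment by the sign equivalence $F$ of Proposition~\ref{lem:signrep_on_coh} and the shift $[1]$ arranges the identifications $\tal_X\mapsto\tal_{S^{[n]}}$, $\beta_X\mapsto\beta_{S^{[n]}}$ individually. Restriction to the orthogonal complement of $U_X$ then yields $\psi$.

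It remains to verify $\psi\in\mathrm{Mon}^2(S^{[n]})$. For the discriminant condition, writing $\psi(\delta_X)=\mu+k\delta_{S^{[n]}}$ with $\mu\in\h^2(S,\BZ)$ and $k\in\BZ$, the isometry relation $\mu^2=(2-2n)(1-k^2)$ together with the constraint that $\psi(\delta_X)$ must have divisibility $2n-2$ in $\h^2(S^{[n]},\BZ)$ forces $\mu\in(2n-2)\h^2(S,\BZ)$ and $k\equiv\pm 1\bmod(2n-2)$, so $\psi$ acts by $\pm\id$ on the discriminant group $A_{\h^2(S^{[n]},\BZ)}\cong\BZ/(2n-2)$. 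Orientation-preservation of $\psi$ follows from the fact that the modified $\Phi^{\tH}$ has real spinor norm one on $\Lambda\otimes\BR$ (inherited from the building blocks of Theorem~\ref{prop:image_representation_Hilb_U} together with the shift $[1]$), combined with the compatibility of orientations under restriction from $\Lambda$ to the sublattice $\h^2$. The main obstacle in this plan is carefully pinning down the orientation-preservation claim, since a $\Kdrein$-type analogue of the K3 orientation theorem \cite{HMSK3Orientation} must be invoked or directly verified through the specific autoequivalences used in the construction; the remaining steps reduce to lattice-theoretic manipulations enabled by the machinery of Sections~\ref{sec:lattices} and \ref{sec:computation_upper_bound_DMON_K3n}.
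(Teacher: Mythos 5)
Your first half --- normalizing $\Phi^{\tH}$ so that it carries the hyperbolic plane $\BZ\tal_X\oplus\BZ\beta_X$ onto $\BZ\tal\oplus\BZ\beta$ in $\Lambda_{S^{[n]}}$ and then restricting to the orthogonal complements to get a Hodge isometry of the $\h^2$'s --- is essentially what the paper does, and using $U\subset\textup{NS}(S)$ to make Eichler's criterion available in the algebraic part is the right idea. The divergence, and the gap, is in the endgame. You want to land in $\mathrm{Mon}^2(S^{[n]})$ and invoke the Global Torelli Theorem, which requires $\psi$ to be orientation-preserving \emph{and} to act by $\pm\id$ on the discriminant group $\BZ/(2n-2)$. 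Your argument for the discriminant condition is not correct: from $\mu\in(2n-2)\h^2(S,\BZ)$ and $\mu^2=(2-2n)(1-k^2)$ you only obtain $k^2\equiv 1\pmod{2(2n-2)}$, which does not force $k\equiv\pm 1\pmod{2n-2}$; for instance $2n-2=12$ and $k=5$ satisfy $25\equiv 1\pmod{24}$ while $5\not\equiv\pm 1\pmod{12}$. This is precisely the phenomenon behind the distinction between $\hat{\Aut}^+(\Lambda)$ and $\Aut^+(\Lambda)$ in Theorem~\ref{prop:image_representation_Hilb_U}, where the two groups coincide only for special values of $2n-2$. The orientation condition you yourself flag as unresolved. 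So as written the proof does not close.

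The paper's proof sidesteps $\mathrm{Mon}^2$ entirely. After producing \emph{some} Hodge isometry $\h^2(X,\BZ)\cong\h^2(S^{[n]},\BZ)$ --- with no parallel-transport condition imposed --- it applies Corollary~\ref{cor:der_equi_moduli_spaces} to realize $X$ as a moduli space $M^S_{\sigma}(v)$ of stable objects on $S$; the resulting isometry of N\'eron--Severi lattices together with the fine-moduli lemma at the end of Section~\ref{sec:conclusion_K3nHK_Db} shows that $M^S_{\sigma}(v)$ is a fine moduli space, i.e.\ $\tbb(v,w)=1$ for some algebraic $w$; then Eichler's criterion applied inside the \emph{unimodular} Mukai lattice $\tH(S,\BZ)$ --- where there is no discriminant obstruction at all --- moves $v$ to $(1,0,1-n)$ by a Hodge isometry, and \cite[Cor.\ 9.9]{MarkmanSurvey} yields the birationality with $S^{[n]}$. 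To salvage your route you would have to show directly that the composite isometry of second cohomologies is a parallel transport operator, and the counterexample above shows this cannot follow from the lattice-theoretic constraints you list.
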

\begin{proof}
	The derived equivalence yields a Hodge isometry
	\[
	\varphi\colon \Lambda_X \cong \Lambda_{S^{[n]}}
	\]
	which by \cite[Prop.\ 3.3]{GHS_Pi1} and $U\subset \textup{NS}(S)$ we can 
postcompose by a Hodge isometry to assume $\varphi(\beta)=\beta$.  Therefore the preimage of $\tde\in \Lambda_{S^{[n]},\textup{alg}}$ under the isometry $\varphi$ must be of the form 
	\[\gamma +c(2n-2)\beta\in \Lambda_{X,\textup{alg}}\] 
	for some $c\in \BZ$ and $\gamma\in \h^{1,1}(X,\BZ)$ of divisibility $2n-2$ with $b(\gamma,\gamma)=2-2n$. We can choose the isometry \eqref{eq:choice_of_delta_for_lattive} for $X$ in such a way that $\gamma$ maps to $\delta$. With this choice the image of $\tal$ under $\varphi$ is of the form $B_{\mu}(\tal)$ for $\mu\in \h^2(S^{[n]},\BZ)_{\textup{alg}}$. 
	
	Indeed, since $\tbb(\tal,\beta) =-1$ we must have 
	\[\tbb(\varphi(\tal), \varphi(\beta)) = \tbb(\varphi(\tal),\beta) = -1
	\] 
	and similarly $\tbb(\varphi(\tal),\tde) =0$.
	Using the orthogonal decomposition
	\[
	\Lambda_{S^{[n]}} \cong \left( \BZ \tal \oplus \BZ \beta \right) \oplus^\perp \BZ \tde \oplus^\perp \h^2(S,\BZ)
	\]
	we see that $\varphi(\tal)$ is of the form 
	\[
	\varphi(\tal) = \tal + \mu + d\beta
	\]
	for some $\mu \in \h^2(S,\BZ)$ and $d\in \BZ$. As $\varphi$ is a Hodge isometry, we furthermore have $\mu \in \h^2(S,\BZ)_{\textup{alg}}$ and $b(\mu,\mu) = 2d$ which implies $\varphi(\tal) = B_\mu(\tal)$. 
	Postcomposing $\varphi$ with $B_{-\mu}$ and using \eqref{eq:ab123} we obtain a Hodge isometry
	\begin{equation}
	\label{eq:proof_cor_bir_der}
	\h^2(X,\BZ)\cong \h^2(S^{[n]},\BZ).
	\end{equation}
	
	Corollary~\ref{cor:der_equi_moduli_spaces} implies that $X$ is a moduli space $M^S_{\sigma}(v)$ of stable objects on $S$. Moreover, from \eqref{eq:proof_cor_bir_der} and the lemma further below we infer that $M^S_{\sigma}(v)$ is a fine moduli space, i.e.\ there exists $w\in \tH(S,\BZ)_{\textup{alg}}$ such that $b(v,w)=1$. Invoking again \cite[Prop.\ 3.3]{GHS_Pi1} we see that there exists 
a Hodge isometry $\gamma\in \RO^+(\tH(S,\BZ))$ such that $\gamma(v)=(1,0,1-n)$. The assertion follows now from \cite[Cor.\ 9.9]{MarkmanSurvey}.
\end{proof}
In \cite[Thm.\ B]{ADMModuli} the authors found an example of derived equivalent hyper-Kähler manifolds such that their second integral cohomology is not Hodge isometric. In particular, the above proposition does not always hold. As granted by Theorem~\ref{prop:rel_version_hodge_isom_lattice} their 
$\Kdrein$ lattices are Hodge isometric, see also Remark~\ref{rmk:explanation_ADM}.

Summarising and using \cite{HLDKConj} we have for a projective $\Kdrein$-type hyper-Kähler manifold $X$ and an elliptic K3 surface $S$ with section: $X$ and $S^{[n]}$ are birational if and only if $\h^2(X,\BZ)$ is Hodge isometric to $\h^2(S^{[n]},\BZ)$ if and only if $\Db(X)\cong \Db(S^{[n]})$. 

We finish the section with the following result used in the above proof. We will need some lattice theory and refer once more to \cite[Sec.\ 14]{HuybrechtsK3} for notations and results. Recall that a moduli space $M_\sigma^S(v)$ of stable sheaves or objects on a K3 surface $S$ is \textit{fine} if there exists a universal family $\CE$ on $M_\sigma^S(v) \times S$. This is equivalent to the existence of some $w \in \tH(S,\BZ)_{\textup{alg}}$ such that $\tbb(v,w) = 1$, see \cite[Sec.\ 10.2.2]{HuybrechtsK3}. 
\begin{lem}
	Let $M$ and $M'$ be smooth moduli spaces of stable objects on a projective K3 surface $S$ such that $\textup{NS}(M)$ and  $\textup{NS}(M')$ are isometric with respect to the BBF pairing. Then $M$ is a fine moduli space if and only if $M'$ is.
\end{lem}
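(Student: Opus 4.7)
\emph{Plan.} I will show that $\textup{div}(v)$ is determined by the abstract lattice $\textup{NS}(M)$ once the ambient K3 surface $S$ is fixed. Combined with the characterization $M^S_\sigma(v)$ is fine $\iff \textup{div}(v) = 1$ in $L \coloneqq \tH(S,\BZ)_{\textup{alg}}$, this will yield the lemma. In the intended $\Kdrein$-type setting both Mukai vectors have $v^2 = v'^2 = 2n-2$, and I will use this.

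The main step is a discriminant calculation. By Yoshioka's Mukai isomorphism one has $\textup{NS}(M) \cong v^\perp \cap L$ and similarly $\textup{NS}(M') \cong v'^\perp \cap L$. Write $d = \textup{div}(v)$ and $d' = \textup{div}(v')$. The standard index formula
\[
[L : \BZ v \oplus v^\perp] \;=\; v^2/d \;=\; (2n-2)/d
\]
follows because $\tbb(\,\cdot\,,v)\colon L \to \BZ$ has kernel $v^\perp$ and image $d\,\BZ$. Comparing the discriminant of $\BZ v \oplus v^\perp$ with that of its finite-index overlattice $L$, and using the decomposition $L = U \oplus \textup{NS}(S)$, I obtain
\[
|\textup{disc}(\textup{NS}(M))| \;=\; |\textup{disc}(\textup{NS}(S))| \cdot \frac{2n-2}{d^2},
\]
together with the analogous identity for $M'$ with $d'$ in place of $d$.

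Since $\textup{NS}(M)$ and $\textup{NS}(M')$ are isometric, their discriminants agree, and the common factor $|\textup{disc}(\textup{NS}(S))|$ then cancels, forcing $d^2 = d'^2$ and hence $d = d'$. In particular $d = 1$ if and only if $d' = 1$, which is the stated equivalence of fineness.

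There is no genuine obstacle; the whole argument is an elementary discriminant computation. The only conceptual point worth emphasizing is that the two moduli spaces share the ambient surface $S$: without this, one would obtain only the weaker relation $|\textup{disc}(\textup{NS}(S))|\, d^2 = |\textup{disc}(\textup{NS}(S'))|\, d'^2$, which on its own is not enough to determine fineness.
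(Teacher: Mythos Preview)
Your proof is correct and takes essentially the same approach as the paper: both arguments compute $|\textup{disc}(\textup{NS}(M))|$ in terms of $|\textup{disc}(\tH(S,\BZ)_{\textup{alg}})|$, $v^2=2n-2$, and the index $[L:\BZ v\oplus v^\perp]$, and then compare. Your formulation via the divisibility $d=\textup{div}(v)$ is exactly the paper's cokernel $P$ (of order $d$) in disguise, and your index $(2n-2)/d$ is the paper's $|K|$; the implicit assumption $v^2=v'^2$ that you flag is also used tacitly in the paper's proof.
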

\begin{proof}
	Let $N\subset L$ be a saturated sublattice of an even lattice $(L,(\_,\_))$, i.e.\ $L/N$ is torsion-free. Consider the diagram
	\begin{equation}
	\label{eq:diag_lattices}
	\begin{tikzcd}
		0 \ar[r] & N^\perp \ar[d] \ar[r] & N^\perp \ar[d] \ar[r] & 0 \ar[d] & \\
		0 \ar[r] & N \oplus N^\perp \ar[r] \ar[d] & L \ar[d] \ar[r] & K \ar[d] \ar[r] &0 \\
		0 \ar[r] & N \ar[r] \ar[d] & N^\vee \ar[d] \ar[r] & A(N) \ar[d] \ar[r] &0 \\
		& 0 \ar[r] & P \ar[r]  & P \ar[r] & 0.
	\end{tikzcd}
	\end{equation}
	Here, $N^\vee \coloneqq \Hom_\BZ(N,\BZ)$ is the dual lattice, $N\to N^\vee$ and $L \to N^\vee$ denote the natural maps $v\mapsto (x\mapsto (x,v))$, $A(N)$ is the discriminant group of $N$, and $K$ and $P$ denote the cokernel of the corresponding morphisms. 
	
	As recalled above, for a moduli space $M=M_\sigma^S(v)$ we have
	\[
	\h^2(M,\BZ) \cong v^\perp \subset \tH(S,\BZ).
	\]
	In particular, 
	\begin{equation*}
		\textup{NS}(M) \oplus \BZ v \subset \tH(S,\BZ)_\textup{alg}
	\end{equation*}
	is an orthogonal decomposition of a finite index sublattice of $\tH(S,\BZ)_\textup{alg}$. 

	Let us assume that $M = M_\sigma^S(v)$ is fine and apply diagram \eqref{eq:diag_lattices} for $L = \tH(S,\BZ)_\textup{alg}$ and $N = \BZ v$. The moduli space $M$ being fine is equivalent to surjectivity of the map
	\[
	\tH(S,\BZ)_\textup{alg} \to (\BZ v)^\vee, \quad x\mapsto (kv\mapsto \tbb(x,kv)).
	\]
	Hence, in our situation we have $P\cong 0$ and, therefore, $K\cong A(N)\cong \BZ/(2n-2)\BZ$ for $2n$ the dimension of $M$. Using \cite[Eq.\ (0.2) in Ch.\ 14]{HuybrechtsK3} we find 
	\begin{equation}
	\label{eq:9_10_a}
		\textup{disc}(\textup{NS}(M)) = |K|^2 \cdot \textup{disc}(\tH(S,\BZ)_\textup{alg}) / \textup{disc}(\BZ v) = (2n-2) \cdot \textup{disc}(\tH(S,\BZ)_\textup{alg})
	\end{equation}

	Let us now consider $M' = M_{\sigma'}^S(v')$ and inspect diagram \eqref{eq:diag_lattices} for $L = \tH(S,\BZ)_\textup{alg}$ and $N = \BZ v'$ such that $K = L/(N \oplus N^\perp)$. We employ again \cite[Eq.\ (0.2)]{HuybrechtsK3} and find
	\begin{equation}
	\label{eq:9_10_b}
		\textup{disc}(\textup{NS}(M')) = |K|^2 \cdot \textup{disc}(\tH(S,\BZ)_\textup{alg}) / (2n-2).
	\end{equation}
	By assumption, $\textup{NS}(M)$ and $\textup{NS}(M')$ are isometric, thus we obtain the equality $\textup{disc}(\textup{NS}(M)) = \textup{disc}(\textup{NS}(M'))$. Combining \eqref{eq:9_10_a} and \eqref{eq:9_10_b} we find $|K| = 2n-2$. In particular, in the situation $L = \tH(S,\BZ)_\textup{alg}$ and $N = \BZ v'$ we find that $K \cong A(N)$ and, therefore, $P \cong 0$ in \eqref{eq:diag_lattices}. This implies that
	\[
	\tH(S,\BZ)_\textup{alg} \to (\BZ v')^\vee, x\mapsto (kv'\mapsto \tbb(x,kv'))
	\]
	is surjective which shows that $M'$ is a fine moduli space as well. 
\end{proof}
We remark that the proof also applies for non-fine moduli spaces $M$ and $M'$. That is, in general, there always exists Brauer classes $\alpha, \alpha' \in \mathrm{Br}(S)$ such that $\alpha$ respectively $\alpha'$ twisted universal families exist over $M \times S$ respectively $M' \times S$. The proof then shows that $\textup{ord}(\alpha) = \textup{ord}(\alpha')$ if $\textup{NS}(M) \cong \textup{NS}(M')$. 
\section{Further examples of derived equivalences}
\label{sec:Calculation_exampleequiv_on_extended}
We complement the previous sections by integrating some derived equivalences of hyper-Kähler manifolds into the framework of the extended Mukai lattice.  
\subsection{Dimension four}
\label{subsec:examples_dim4}
We come back to Example~\ref{ex:4.15}. Addington \cite{AddingtonDerSymHK} 
as well as Markman--Mehrotra \cite{MarkmanMehrotraIntTransf} considered the sheaf
\[
\mathcal{E}^1\coloneqq \sExtA^1_{\pi_{13}}(\pi_{12}^*(\mathcal{I}), \pi_{23}^*(\mathcal{I})) \in \Coh(S^{[2]} \times S^{[2]})
\]
which is reflexive, of rank 2 and locally free away from the diagonal. Here, $\CI$ is the universal ideal sheaf on $S \times S^{[2]}$ and $\pi_{ij}$ are the projections from $S^{[2]}\times S \times S^{[2]}$. The Fourier--Mukai transform $\FM_{\CE^1}$ with kernel $\CE^1$ was shown to yield an 
auto-equivalence $\FM_{\CE^1}\in \Aut(\Db(S^{[2]}))$. More conceptually, the functor $\FM_{\CI}$ is shown to be a spherical functor with $\CE^1[1]$ the corresponding twist auto-equivalence. 

\begin{prop}
\label{prop:Markman_Ext_on_Mukailattice}
	The equivalence $\FM_{\CE^1}$ acts on the extended Mukai lattice via $-s_v$, where $v$ is the vector $\tal + \beta$.
\end{prop}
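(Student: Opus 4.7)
The strategy is to determine the Hodge isometry $\FM_{\CE^1}^{\tH}$ of the rank-$25$ lattice $\Lambda_{S^{[2]}}$ by comparing it with $-s_v$ on enough classes. First, observe that $v = \tal + \beta$ lies in $\Lambda_{S^{[2]},\textup{alg}}$ and satisfies $\tbb(v,v) = -2$, so $-s_v$ is itself a Hodge isometry of $\Lambda_{S^{[2]}}$. By Theorem~\ref{thm:derived_mon_grp_K3n}, $\FM_{\CE^1}^{\tH}$ also preserves $\Lambda_{S^{[2]}}$, so it suffices to verify the equality $\FM_{\CE^1}^{\tH} = -s_v$ on a spanning set.

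The decisive step is Example~\ref{ex:4.15}: since $\FM_{\CE^1}(k(p)) \cong \CE^1|_{p \times S^{[2]}}$ has extended Mukai vector $\tal$, functoriality of the extended Mukai vector on the $k(x)$-orbit (cf.\ \eqref{eq:functoriality_O_vectorizable_sign}) forces $\FM_{\CE^1}^{\tH}(\beta) = \pm \tal$. A direct calculation shows $-s_v(\beta) = \tal$, so these two isometries agree on $\beta$ with the positive sign; applying Lemma~\ref{lem:determinant_extended_rank} to the rank-$2$ kernel simultaneously fixes the invariant $\epsilon(\FM_{\CE^1}^{\tH}) = 1$.

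Next, I would compute the image of $\CO_{S^{[2]}}$ (and, if needed, of one line bundle whose first Chern class involves $\delta$) by unraveling the sheaf-theoretic definition $\CE^1 = \sExtA^1_{\pi_{13}}(\pi_{12}^*\CI, \pi_{23}^*\CI)$. Matching the resulting extended Mukai vectors with $-s_v$ applied to the corresponding classes pins down $\FM_{\CE^1}^{\tH}$ on $\alpha$ and on $\tde$, which together with the previous step determines it on all of $\Lambda_{S^{[2]},\textup{alg}}$ by the isometry property. The transcendental part is then handled by reducing via Proposition~\ref{prop:map_DMonS_DMonSn} to a very general deformation of $S$ and using that a Hodge isometry of $\tH(S^{[2]},\BQ)_{\textup{tr}}$ already pinned down on the algebraic part and preserving the line $\h^{2,0}(S^{[2]})$ must agree with $-s_v|_{\tH(S^{[2]},\BQ)_{\textup{tr}}} = -\id$.

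The main obstacle is the explicit Fourier--Mukai computation with the kernel $\CE^1$, which requires careful bookkeeping with the $\sExtA^1$ definition. A cleaner alternative is Addington's spherical-functor viewpoint: $\FM_{\CE^1}[1]$ is the twist around the spherical functor $\FM_\CI \colon \Db(S) \to \Db(S^{[2]})$, so general results on spherical twists force $\FM_{\CE^1}^{\tH}$ to act as $\pm$ a reflection on the extended Mukai lattice, with the reflecting vector readable off from the cohomological action of $\FM_\CI$. Verifying that this vector agrees with $v = \tal + \beta$ would bypass the line-bundle computation above and immediately yield the claim.
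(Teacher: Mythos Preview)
Your outline is in the right spirit and overlaps with the paper's proof: both fix $\epsilon(\FM_{\CE^1}^{\tH})=1$ via the rank-$2$ image of a point, and both name the spherical-functor route as a clean alternative.  The paper then carries out the direct computation you gesture at, using the distinguished triangle $\CE^1[-1]\to\CE\to\CO_\Delta[-2]$ to write $[\FM_{\CE^1}(\CL_2)]=[\CL_2]-[\FM_\CI(\FM_{\CI^\vee}(\CL_2))]$ in $K$-theory and evaluating the right-hand side with the tautological short exact sequence $0\to\CI\to\CO_{S\times S^{[2]}}\to\CO_\CZ\to 0$.

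There is, however, a real gap in your plan.  Your split into ``algebraic part'' and ``transcendental part'' is both unnecessary and, as stated, insufficient.  Even after you know $\FM_{\CE^1}^{\tH}(\beta)=\pm\tal$ and $\epsilon=1$, the sign on $\beta$ is not yet fixed (both $v=\tal$ and $v=-\tal$ satisfy $\sgn(v)\cdot v=\tal$), and your transcendental argument only tells you that on a very general deformation the action on $\h^2(S,\BZ)_{\textup{tr}}$ is $\pm\id$ --- it does not decide which.  Since the algebraic and transcendental parts sit orthogonally inside $\Lambda$, no amount of integrality or determinant bookkeeping on the algebraic side forces the transcendental sign.  The paper sidesteps this entirely by computing $[\FM_{\CE^1}(\CL_2)]$ for \emph{all topological} line bundles $\CL$ on $S$: the extended Mukai vectors $\tilde{v}(\CL_2)$ and $\tilde{v}(\CL_2\otimes\CO(-\delta))$ then span $\tH(S^{[2]},\BQ)$ as $\CL$ ranges over $K^0_{\textup{top}}(S)$, so the match with $-s_v$ is established on the nose with no Hodge-theoretic residue.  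If you prefer your route, you must either do this topological computation or genuinely run the spherical-functor argument; the deformation step does not close the proof.
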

\begin{proof}
	One way to prove the assertion is to use general results on the action of the twist equivalence associated to a spherical functor \cite[Sec.\ 1.4]{AddingtonDerSymHK}.
	Instead, we will calculate directly the images of line bundles using the 
definition of the twist auto-equivalence associated to a spherical functor. 
	
	The relative $\Ext$ complex
	\[
	\CE\coloneqq\sExtA_{\pi_{13}}^{\bullet}(\pi_{12}^*(\mathcal{I}), \pi_{23}^*(\mathcal{I}))={\pi_{13}}_\ast (\pi_{12}^{\ast}(\CI^{\vee})\otimes \pi_{23}^{\ast}(\CI))\in \Db(S^{[2]} \times S^{[2]})
	\]
	describes (up to the shift $[2]$) the composition of the right adjoint of 
	\[
	\FM_\CI \colon \Db(S^{[2]}) \to \Db(S)
	\] 
	with $\FM_\CI$ and sits in a distinguished triangle
	\[
	\CE^1[-1]\to \CE\to \CO_{\Delta}[-2]
	\]
	in $\Db(S^{[2]}\times S^{[2]})$. This yields the identity
	\[
	[\FM_{\CE^1}(\CL_2)]=[\CL_2]-[\FM_{\CE}(\CL_2)] = [\CL_2] - [\FM_\CI(\FM_{\CI^{\vee}}(\CL_2))]
	\]
	in topological $K$-theory for all topological line bundles $\CL$ on $S$. 

	There is a natural short exact sequence
	\[
	0 \to \CI \to \CO_{S\times S^{[2]}} \to \CO_{\CZ} \to 0
	\]
	on $S\times S^{[2]}$, where $\CZ \subset S\times S^{[2]}$ is the universal subscheme, which we can dualize to obtain the distinguished triangle
	\[
	\omega_{\CZ} [-2] \to \CO_{S\times S^{[2]} } \to \CI^{\vee}
	\]
	in $\Db(S\times S^{[2]})$. From these sequences we obtain the identities
	\begin{align*}
	[\FM_{\CI^{\vee}}(\CL_2)]&=\chi(\CL_2)[\CO_S]-\chi(\CL)[\CL],\\
	[\FM_{\CI}(\CO_S)] & = 2[\CO_{S^{[2]}}]-[\CO_{S^{[2]}}]-[\CO_{S^{[2]}}(-\delta)],\\
	[\FM_{\CI}(\CL)]&=\chi(\CL)[\CO_{S^{[2]}}] - [\CL^{[2]}]
	\end{align*}
	in topological $K$-theory, where $\CL^{[2]}=\FM_{\CO_\CZ}(\CL)$ is the 
tautological rank 2 bundle associated to $\CL$ and the second identity is a special case of the third one using $\CO_S^{[2]} \cong \CO_{S^{[2]}} \oplus \CO_{S^{[2]}}(-\delta)$. The class of a point $\pt$ is sent to a sheaf of rank 2. By an analogous argument to Lemma~\ref{lem:determinant_extended_rank} using the object $k(x)$ one concludes $\epsilon(\FM_{\CE^1}^{\tH})=1$. 

To finish the proof we need to use the above to calculate how $\FM_{\CE^1}$ acts on the extended Mukai lattice $\tH(S^{[2]},\BQ)$. We have $[\FM_{\CE^1}(\CO_{S^{[2]}})] = [\CO_{S^{[2]}}(-\delta)]$ as well as $[\FM_{\CE^1}(\CO_{S^{[2]}}(-\delta))] = [\CO_{S^{[2]}}]$, since spherical functors always induce involutions on cohomology. Applying extended Mukai vectors to this equality we find $\FM_{\CE^1}^{\tH}(\tilde{v}(\CO_{S^{[2]}})) = \tilde{v} (\CO_{S^{[2]}}(-\delta))$ and vice versa. 

For a general topological line bundle $\CL$ we find
\[
[\FM_{\CE^1}(\CL_2)] = [\CL_2] - \chi(\CL_2)([\CO_{S^{[2]}}]-[\CO_{S^{[2]}}(-\delta)]) - \chi(\CL)^2[\CO_{S^{[2]}}] + \chi(\CL) [\CL^{[2]}].
\]
If $b(\Rc_1(\CL),\Rc_1(\CL)) = \ell$, then $\chi(\CL) = \ell/2+2$ and $\chi(\CL_2) = \ell^2/8 + 5\ell/2 +3$, see \cite[Lem.\ 5.1]{EGL}. Moreover, $\CL^{[2]}$ is a bundle of rank two and $\Rc_1(\CL^{[2]}) = \Rc_1(\CL) -\delta$. Taking extended Mukai vectors an explicit calculation shows that $\FM_{\CE^1}^{\tH}$ agrees with $-s_v$ for $v = \tal + \beta$. 
\end{proof}
Thus, the functors $\FM_{\CE^1}$ and $\phi_{[2]}(\ST_{\CO_S})$ induce the 
same isometry on the extended Mukai lattice and therefore also on the whole cohomology.

For $S^{[2]}$ there are other auto-equivalences given as the twist of a spherical functor. One example is Horja's EZ-spherical twist \cite{HorjaEZ}. The exceptional divisor $i\colon\BP(\Omega_S^1) \cong E \hookrightarrow S^{[2]}$ fibres over the K3 surface $\pi \colon E \to S$. One obtains the spherical functor $i_{\ast}(\pi^{\ast}(\_))\colon \Db(S)\to \Db(S^{[2]})$ and an auto-equivalence $T_{i_{\ast}\pi^{\ast}}\in \Aut(\Db(S^{[2]}))$ characterized for $\CF\in \Db(S^{[2]})$ by the distinguished triangle
\[
i_{\ast}\pi^{\ast}\pi_{\ast}i^{!}(\CF)\to \CF\to T_{i_{\ast}\pi^{\ast}}(\CF).
\]
\begin{prop}
\label{prop:Horja_EZ_on_Mukai}
	The auto-equivalence $T_{i_{\ast}\pi^{\ast}}$ acts on the extended Mukai 
lattice via the isometry $-s_{v}$ for the vector $v=\tde + \beta$.
\end{prop}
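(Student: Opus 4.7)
The plan is to mimic the strategy used for $\FM_{\CE^1}$ in Proposition~\ref{prop:Markman_Ext_on_Mukailattice}: compute the induced action of $T_{i_\ast\pi^\ast}$ on topological $K$-theory for enough test objects, convert to extended Mukai vectors, and fix the global sign via Lemma~\ref{lem:determinant_extended_rank}. From the defining distinguished triangle one extracts the identity
\[
[T_{i_\ast\pi^\ast}(\CF)]=[\CF]-[i_\ast\pi^\ast\pi_\ast i^!(\CF)]
\]
in $K^0_{\textup{top}}(S^{[2]})$. Since $E\subset S^{[2]}$ is a smooth divisor with $[E]=2\delta$ we have $i^!(\CF)=i^\ast(\CF)(2\delta)[-1]$, reducing everything to the computation of $R\pi_\ast$ of twists by $\CO_E(2\delta)$ along the $\BP^1$-bundle $\pi\colon E=\BP(\Omega^1_S)\to S$.

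First I would test the formula on $\CF=\CO_{S^{[2]}}$. The normal bundle identification $\CO_E(-E)\cong\CO_\pi(2)$ (precisely the EZ-spherical condition underlying \cite{HorjaEZ}) means $\CO_E(2\delta)$ restricts to $\CO_\pi(-2)$ on each fibre of $\pi$; combining this with $K_S=\CO_S$ and relative Serre duality gives $R\pi_\ast\CO_E(2\delta)=\CO_S[-1]$, so
\[
[T_{i_\ast\pi^\ast}(\CO_{S^{[2]}})]=[\CO_{S^{[2]}}]-[\CO_E]=[\CO_{S^{[2]}}(-2\delta)].
\]
A parallel computation handles $\CO_{S^{[2]}}(-\delta)$ and the tautological bundles $\CL_n$ for $\CL\in\Pic(S)$, where the projection formula together with Grothendieck--Riemann--Roch reduces the problem to a cohomology computation on $S$ of the same flavour as the one above.

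Having these $K$-theoretic images in hand, I would compute the corresponding extended Mukai vectors via Definition~\ref{defn:extended_Mukai_line_bundles} and verify that they agree, up to the overall sign $\epsilon(T_{i_\ast\pi^\ast}^{\tH})$ from Section~\ref{subsec:prelim_der_equi_tael}, with the reflection $s_v$ applied to the source vectors; here $v=\tde+\beta$ satisfies $\tbb(v,v)=-2$, so $s_v$ is a genuine reflection. Since the resulting vectors $\tilde v(\CO_{S^{[2]}})$, $\tilde v(\CO_{S^{[2]}}(-\delta))$ and the $\tilde v(\CL_n)$ span $\tH(S^{[2]},\BQ)$, the induced isometry $T_{i_\ast\pi^\ast}^{\tH}$ is forced to be $\pm s_v$.

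To pin down the sign I would apply Lemma~\ref{lem:determinant_extended_rank}: because $T_{i_\ast\pi^\ast}(\CO_{S^{[2]}})$ has positive rank by the computation above, $\epsilon(T_{i_\ast\pi^\ast}^{\tH})=\det(T_{i_\ast\pi^\ast}^{\tH})=1$; as $\tH(S^{[2]},\BQ)$ has rank $25$ we have $\det(s_v)=-1$ and $\det(-s_v)=1$, so the correct lift is $-s_v$. The main obstacle is the careful bookkeeping of $R\pi_\ast$ along $E=\BP(\Omega^1_S)$ for the various test classes, combined with the sign juggling forced by the fact that $n=2$ is even and $b_2(S^{[2]})=23$ is odd, so that $T^{\tH}$ is only recovered from $T^{\SH}$ up to an overall sign which must be pinned down by a separate rank argument.
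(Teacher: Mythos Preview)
Your approach is correct and would go through. You compute the action of $T_{i_\ast\pi^\ast}$ on individual test classes $[\CO_{S^{[2]}}]$, $[\CO_{S^{[2]}}(-\delta)]$, $[\CL_2]$ directly from the defining triangle and then match against $-s_v$, fixing the sign via Lemma~\ref{lem:determinant_extended_rank}; this is exactly the template of Proposition~\ref{prop:Markman_Ext_on_Mukailattice}. The paper organises the same computation differently: instead of tracking images one by one, it invokes the general fact (from \cite[Sec.\ 2.4]{AddingtonDerSymHK}) that the cohomological action of a spherical twist is $-1$ on the image of the spherical functor and $+1$ on its orthogonal complement, and then identifies that image via $i_\ast\pi^\ast(\CL^{\otimes 2})\cong \CL_2|_E$, so that the $(-1)$-eigenspace in $K$-theory is spanned by $[\CL_2]-[\CL_2(-2\delta)]$. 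Your identity $[T_{i_\ast\pi^\ast}(\CL_2)]=[\CL_2(-2\delta)]$ is literally equivalent to the paper's $[\CL_2|_E]\mapsto -[\CL_2|_E]$, so the two arguments differ only in packaging; the eigenspace formulation avoids computing $R\pi_\ast$ for each test object separately, while your direct computation is more elementary in that it does not appeal to the general structure result for spherical twists. One small terminological slip: in the paper's notation $\CL_n$ denotes the associated line bundle on $S^{[n]}$, not the tautological bundle $\CL^{[n]}$.
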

\begin{proof}
	We employ \cite[Sec.\ 2.4]{AddingtonDerSymHK}. Lemma~\ref{lem:determinant_extended_rank} gives once more $\epsilon(T_{i_{\ast}\pi^{\ast}}^{\tH})=1$. For $\CL$ a line bundle on $S$ one easily obtains
	\[
	i_\ast\pi^\ast(\CL^{\otimes 2})\cong \CL_2|_E. 
	\]
	This means that the classes $[\CL_2]-[\CL_2\otimes \CO_{S^{[2]}}(-E)]$ in $K^0_{\textup{top}}(S^{[2]})$ are being multiplied by $-1$ under the action of $T_{i_{\ast}\pi^{\ast}}$ and their orthogonal complement is left invariant. We have an equality
	\[
	2(v(\CL_2)-v(\CL_2\otimes \CO_{S^{[2]}}(-E))=T(\tilde{v}(\CL_2)^2-\tilde{v}(\CL_2\otimes \CO_{S^{[2]}}(-E))^2)
	\]
	in $\SH(S^{[2]},\BQ)$. A computation finishes the proof.
\end{proof}
Alternatively, one could have proven the proposition using \cite[Thm.\ 4.26]{KrugPloogSosna} and Proposition~\ref{lem:signrep_on_coh}. 
\begin{rmk}
	All cohomological involutions we encountered (Proposition~\ref{lem:signrep_on_coh}, Pro-position~\ref{prop:STonHilbn}, Proposition~\ref{prop:Markman_Ext_on_Mukailattice} and Proposition~\ref{prop:Horja_EZ_on_Mukai}) had an extra $-\id$ in the case $n$ even which is due to the sign convention from Section~\ref{subsec:prelim_der_equi_tael}. 
\end{rmk}
\subsection{Relative Poincar\'e}
\label{subsec:Example_POINCARE_ADM}
In \cite{ADMModuli} the authors study derived equivalences between certain moduli spaces of stable sheaves on K3 surfaces. Let $S$ be a very general projective K3 surface with polarization $H$ of degree $2g-2$ and consider the moduli spaces of stable sheaves
\[
M^S_H(0,1,d+1-g).
\]
One can equivalently consider these varieties as the relative compactified Jacobians $\overline{\Pic}^d\coloneqq \overline{\Pic}^d(\CC/\BP^g)$ of degree $d$ of the universal curve
\[
\CC \to \BP^{g}=|H|.
\] 
In \cite[Prop.\ 3.1]{ADMModuli} following Arinkin \cite{ArinkinAutoduality} the authors construct a relative (twisted) Poincar\'e sheaf $\CP_{dd'}$ on 
\[
\overline{\Pic}^d \times_{\BP^g} \overline{\Pic}^{d'}
\]
inducing a (twisted) derived equivalence. For simplicity we will consider 
the untwisted case $d=d'=0$ and denote $\CP\coloneqq\CP_{00}$, $M\coloneqq M^S_H(0,1,1-g)$ together with the Lagrangian fibration
\[
\pi \colon M \to \BP^g.
\] We have the well-known Hodge isometry
\[
\h^2(M,\BZ)\cong (0,1,1-g)^\perp \subset \tH(S,\BZ)
\]
and the algebraic part of $\h^2(M,\BZ)$ has a basis $\lambda,f$ with intersection form
\[
\begin{pmatrix} 2g-2 & 2\\ 2 & 0 \end{pmatrix}
\]
where $f$ is the first Chern class of $\CO_{\BP^g}(1)$ pulled back to $M$. 

Let us determine the action of $\CP$ on the extended Mukai lattice. We will consider the case $g$ even, the case $g$ even is similar. The skyscraper sheaf $k(x)$ of a point 
$x\in A\subset M$ contained in a smooth fibre $A$ of the Lagrangian fibration is by definition sent under $\FM_\CP$ to a degree 0 line bundle $\CL$ on the abelian variety $A$ whose Mukai vector is of the form $v(\CL)=f^g \in \SH(X,\BQ)$. The duality property of the Poincar\'e sheaf \cite[Sec.\ 6.2]{ArinkinAutoduality} implies that $\CL$ is sent under $\FM_\CP$ to the object $k(x^{\vee})[-g]$, where $x^{\vee}\in A$ parametrizes $\CL^{\vee}$. This gives
\[
\beta \mapsto f, \quad f \mapsto \beta.
\]

Moreover, the Lagrangian fibration $M\to \BP^g$ admits a section $\BP^g \hookrightarrow M$ given by the trivial line bundle on each fibre. Using Remark~\ref{rem:Pn_in_Hilb} and $\int_X[A][\BP^g]=1$ we see that the Mukai vector of $\CO_{\BP^g}\in \Db(M)$ satisfies
\[
\overline{v(\CO_{\BP^g})}=T\left( \frac{(\frac{1}{2}\lambda-\frac{g+1}{2}f + \frac{g+1}{2}\beta)^g}{g!} \right)\in \SH(M,\BQ).
\]
The definition of $\CP$ \cite[Eq.\ (3.1)]{ADMModuli} yields that $\FM_\CP$ sends $\CO_{\BP^g}$ to a line bundle $\CM\in \Pic(M)$. The duality property of $\CP$ for families of curves \cite[Eq.\ (7.8)]{ArinkinAutoduality} implies that $\CM$ is mapped under $\FM_\CP$ to $\CO_{\BP^g}[-g]\otimes 
\CK$. Here, $\CK$ is the line bundle $\pi^{\ast}\det(\mathrm{R}^1\pi_{\ast}\CO_M)$ which, using \cite[Thm.\ 1.3]{MatsushitaHigherImages}, has first Chern class $-(g+1)f$. 
Let us denote
\[
h=-\frac{\lambda}{2}+\frac{g-1}{4}f +\frac{g+1}{2}\beta \in \tH(M,\BQ)
\]
and note that $f$ and $-h$ span a rational hyperbolic plane. Summarizing the above discussion and using the extended Mukai vector we have the following.
\begin{prop}
	The equivalence $\FM_\CP$ acts on the extended Mukai lattice via
	\[
	\alpha \mapsto h, \quad h\mapsto \alpha, \quad \beta \mapsto f, \quad f\mapsto \beta.
	\]
\end{prop}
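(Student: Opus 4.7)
The images of $\beta$ and $f$ under $\FM_\CP^{\tH}$ are already determined in the preceding computation, so the task reduces to pinning down the action on the remaining algebraic generators $\alpha$ and $\lambda$. Since $S$ is very general, the algebraic part of $\tH(M,\BQ)$ is spanned by $\alpha,\lambda,f,\beta$ with $\tbb(\alpha,\beta)=-1$, $b(\lambda,\lambda)=2g-2$, $b(\lambda,f)=2$, $b(f,f)=0$. Writing $\FM_\CP^{\tH}(\alpha)=r\alpha+a\lambda+bf+t\beta$ and imposing the isometry conditions $\tbb(\FM_\CP^{\tH}(\alpha),f)=-1$, $\tbb(\FM_\CP^{\tH}(\alpha),\beta)=0$ and $\tbb(\FM_\CP^{\tH}(\alpha),\FM_\CP^{\tH}(\alpha))=0$ forces $r=0$, $a=-\tfrac12$, $b=\tfrac{g-1}{4}$, leaving only a scalar freedom $t$. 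Imposing the analogous constraints on $\FM_\CP^{\tH}(\lambda)$ together with the orthogonality $\tbb(\FM_\CP^{\tH}(\alpha),\FM_\CP^{\tH}(\lambda))=0$ and $\tbb(\FM_\CP^{\tH}(\lambda),\FM_\CP^{\tH}(\lambda))=2g-2$ reduces it to the form $-2\alpha+2tf+\tfrac{g-1}{2}\beta$ in terms of the same parameter $t$.

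To fix $t$, I would apply extended Mukai vector functoriality to the identity $\FM_\CP(\CO_{\BP^g})=\CM$ recorded above. The Lagrangian section $\BP^g\hookrightarrow M$ makes $\CO_{\BP^g}$ an object in the $\CO_M$-orbit, and the formula for $\overline{v(\CO_{\BP^g})}$ together with the characterisation $T(\tilde v(\CE)^g/g!)=\pm\overline{v(\CE)}$ identifies
\[
\tilde v(\CO_{\BP^g}) = \frac{\lambda}{2}-\frac{g+1}{2}f+\frac{g+1}{2}\beta
\]
up to sign, a normalisation confirmed by the self-pairing $-\tfrac{g+3}{2}=-2 r_X$ with $r_X=(g+3)/4$. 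Applying the ansatz for $\FM_\CP^{\tH}$ to this vector and matching against $\tilde v(\CM)=\alpha+\Rc_1(\CM)+(r_X+\tfrac12 b(\Rc_1(\CM),\Rc_1(\CM)))\beta$ forces $t=\tfrac{g+1}{2}$ and simultaneously yields $\Rc_1(\CM)=-(g+1)f$, in agreement with the cross-check provided by $\FM_\CP(\CM)\cong\CO_{\BP^g}[-g]\otimes\CK$ with $\Rc_1(\CK)=-(g+1)f$. This is precisely $\FM_\CP^{\tH}(\alpha)=h$, and by the isometry property the rational hyperbolic planes $\langle\alpha,\beta\rangle$ and $\langle -h,f\rangle$ are swapped, giving the remaining identities $\FM_\CP^{\tH}(h)=\alpha$ and $\FM_\CP^{\tH}(f)=\beta$.

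The main technical obstacle will be the careful management of signs from three sources: the $\epsilon$-factor of Section~\ref{subsec:prelim_der_equi_tael} in the case $g$ even, the shift $[-g]$ appearing in the duality relation, and the signum convention of Definition~\ref{defn:vector_OX_evenN}. In particular, one must verify that $\CO_{\BP^g}$ lies in the $\CO_M$-orbit with a canonical normalisation of its extended Mukai vector; this can be done along the lines of Proposition~\ref{prop:STonHilbn} by realising $\BP^g\subset M$ as the zero locus of a regular section of a suitable tautological bundle and exploiting the resulting Koszul resolution to express $\CO_{\BP^g}$ through an explicit chain of equivalences starting from $\CO_M$.
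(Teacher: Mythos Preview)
Your strategy coincides with the paper's: both rely on the geometric facts recorded just above the proposition (the images of $k(x)$, of a fibrewise degree-zero line bundle, and the chain $\CO_{\BP^g}\mapsto\CM\mapsto\CO_{\BP^g}[-g]\otimes\CK$) together with the isometry property of $\FM_\CP^{\tH}$. Your write-up is more explicit about the linear algebra than the paper's terse ``summarizing the above discussion''.

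There is, however, a genuine gap at the step where you claim that matching $\FM_\CP^{\tH}(\tilde v(\CO_{\BP^g}))$ against $\tilde v(\CM)$ ``forces $t=\tfrac{g+1}{2}$''. It does not. Your own computation gives
\[
\FM_\CP^{\tH}(\tilde v(\CO_{\BP^g}))=-\alpha+\Bigl(t+\tfrac{g+1}{2}\Bigr)f-\tfrac{g+3}{4}\beta,
\]
so $\Rc_1(\CM)=-(t+\tfrac{g+1}{2})f$. But because $b(f,f)=0$, the $\beta$-coefficient of $\tilde v(\CM)=\alpha+\Rc_1(\CM)+\bigl(r_X+\tfrac12 b(\Rc_1(\CM),\Rc_1(\CM))\bigr)\beta$ is \emph{always} $r_X=\tfrac{g+3}{4}$, independently of the multiple of $f$. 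So this relation only expresses $\Rc_1(\CM)$ as a function of $t$ and imposes no constraint on $t$ itself. Moreover, one checks directly that for every value of $t$ your ansatz defines an isometry of the algebraic part satisfying $\Phi^2=\id$; hence the second relation $\FM_\CP(\CM)\cong\CO_{\BP^g}[-g]\otimes\CK$, applied via your ansatz, is automatically consistent and again does not single out $t$. Treating it as a ``cross-check'' is therefore circular.

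To close the argument you must determine $\Rc_1(\CM)$ independently --- for instance by unwinding the explicit construction of $\CP$ in \cite[Eq.\ (3.1)]{ADMModuli} restricted to the zero-section (this is what the paper is implicitly invoking when it writes ``the definition of $\CP$ yields\dots''), or by a Grothendieck--Riemann--Roch computation for the pushforward along $\pi$. Once $\Rc_1(\CM)=-(g+1)f$ is established by such an external input, your equation $\Rc_1(\CM)=-(t+\tfrac{g+1}{2})f$ indeed gives $t=\tfrac{g+1}{2}$ and the rest of your argument goes through.
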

%For n even, one knows that the action on the transcendental lattice can only be \pm id (odd rank Hodge structure). Moreover, the determinant needs to be positive, therefore only \pm the isometry of the above proposition is a candidate. if we act antisymplectically, then one looks at Hochschild homology and cohomology. 
Expressed differently, the derived equivalence $\FM_\CP$ exchanges the two rational hyperbolic planes given by $\alpha, \beta$ and $f,h$. 

\begin{rmk}
\label{rmk:explanation_ADM}
In \cite{ADMModuli} it was observed that the case $d=0$ and $d'=g-1$ yields an example of derived equivalent hyper-Kähler manifolds $M$ and $M'$ such that their second integral cohomology groups are not isometric \cite[Thm.\ B]{ADMModuli}. The intersection form on $\textup{NS}(M)$ has discriminant $-4$ whereas the lattice $\textup{NS}(M')$ is isometric to the hyperbolic plane. Let us denote the generators of $\textup{NS}(M')$ inside $\h^2(M',\BZ)$ by $e',f'$ such that $b(e',f')=1$ and $f'$ denotes again 
the fibre class. As above one can show that the derived equivalence induces an isometry $\tH(M,\BQ)\cong  \tH(M',\BQ)$ given by
\[
\beta\mapsto f', \quad f\mapsto \beta, \quad \alpha \mapsto - e'+\frac{g+1}{2}\beta, \quad h \mapsto \alpha.
\]
This is compatible with the $\Kdrein$ lattices, i.e.\ the above induces a 
Hodge isometry
\[
\Lambda_M\cong \Lambda_{M'}
\]
in accordance with Theorem~\ref{prop:rel_version_hodge_isom_lattice}. Geometrically the variety $M$ admits a section whereas the variety $M'$ admits a line bundle with first Chern class $e'$ which restricts to a principal polarization on each fibre. The derived equivalence $\CP_{0g-1}$ relates these different geometric properties.
\end{rmk}
	\bibliography{../pub_bib}{}
	\Addresses
\end{document}